\crefname{hypothesis}{Hypothesis}{Hypotheses}
\title{Value-Gradient based Formulation of Optimal Control Problem
and Machine 
Learning Algorithm}
\author{Alain Bensoussan\footnotemark[3] \thanks{Naveen Jindal School of Management, University of Texas at Dallas, Richardson, Texas 75080-3021, U.S.A.}
\and Jiayue Han\thanks{School of Data Science, City University of Hong Kong, Kowloon, Hong Kong SAR.} \and Sheung Chi Phillip Yam\thanks{Department of Statistics, Chinese University of Hong Kong, Shatin, N.T., Hong Kong SAR.}
 \and Xiang Zhou\footnotemark[3] \thanks{Department of Mathematics, City University of Hong Kong, Kowloon, Hong Kong SAR.} }
\theoremstyle{plain}
\newtheorem{assumption}[theorem]{Assumption}
\newcolumntype{C}[1]{>{\centering}p{#1}}
\newcommand{\set}[1]{\left\{#1\right\}}
\newcommand{\Real}{\mathbb {R}}
\newcommand{\tr}{\textsf{\tiny T}}
\newcommand{\argmin}{ \operatornamewithlimits{argmin} }
\renewcommand{\d}{\ensuremath{\mathrm{d}}}
\newcommand{\dt}{ \ensuremath{\mathrm{d} t } }
\newcommand{\dx}{ \ensuremath{\mathrm{d} x} }
\newcounter{rownumbers}
\begin{document}

\maketitle

\begin{abstract}
Optimal control problem is typically solved by first finding the value function through  Hamilton–Jacobi equation (HJE) and then taking  the minimizer  of the Hamiltonian to obtain the control. 
 In this work, instead of focusing on the value function, we propose  a new formulation 
for the gradient of the value function 
(value-gradient) as  a decoupled system of   partial differential equations  
in the context of continuous-time deterministic discounted optimal control problem. 
We develop an efficient iterative scheme for this system of equations in parallel by utilizing the properties that they share the same characteristic curves 
as the HJE  for the value function. 
For the theoretical part, we prove that this iterative scheme converges linearly in $L_\alpha^2$ sense for some suitable exponent $\alpha$ in a weight function. For the numerical method, we combine characteristic line method with machine learning techniques. Specifically, we generate multiple characteristic curves 
  at each policy 
 iteration from an ensemble of  initial states, 
and compute both the value function and its gradient simultaneously on each  curve as the labelled data. 
Then supervised machine learning is applied to  minimize the weighted squared loss for both the value function and its gradients. 
Experimental results demonstrate  that  this new method not only significantly increases the accuracy but also 
improves the efficiency and   robustness of the numerical estimates, particularly
with less amount of characteristics   data or  fewer training steps.

\end{abstract}

\begin{keywords}
Optimal control, value function, Hamilton-Jacobi equation, machine learning, 
characteristic curve.
\end{keywords}

\begin{AMS}
65K05, 93-08
\end{AMS}

\section{Introduction}
It is well known that  the study of  Hamilton-Jacobi equation (HJE) is one of the core topics in optimal control theory for controlling continuous-time differential dynamical systems  by the principle of    dynamical programming \cite{Fleming1975Book,Bertsekas2001,fleming2006controlled,bensoussan2018estimation}. 
This  equation  is a first-order  nonlinear partial differential equation (PDE) for the value function which maps an arbitrary given initial state 
  to the optimal value of the cost function. Once this HJE solution is known, it can be used to construct the optimal
  control by taking the  minimizer  of the Hamiltonian.  Such an optimal control is the feedback control and it does not depend on knowledge of  initial conditions.

Although theoretically well-developed, numerical methods for the problem is yet to be studied. Because only few optimal control problems, such as the linear quadratic problem (LQR) \cite{bensoussan2018estimation}, have analytical solutions. Solving the PDE given by HJE is not easy, even for the LQR case, in which HJE is converted to a Riccati equation. Moreover, since the dimension of the HJE is the dimension $d$ of state variable $x$ in the dynamical system, 
the size of the state-discretized problems in solving HJEs increases exponentially with $d$.
 This ``curse of dimensionality'' has been the long-standing challenge in solving the high dimensional HJEs,
 and recently there have been rapid and abundant developments  to mitigate this challenge 
by combining optimal control algorithms with machine learning algorithms, particularly  reinforcement learning and deep neural networks \cite{sutton2018reinforcement,Bertsekas2019,Recht2019tour,2020handbookXZ}.

In the literature,  there exists an extensive research on various numerical methods of finding the approximate solution to the HJEs. 
   One important idea which attracted a considerable amount of attention is termed  the successive approximation method   \cite{BEARD19972159,Bea1998,Beard:1998ti},
 which aims to handle the nonlinearity in  the HJE.  The successive approximation method reduces the  nonlinear HJE to an iterative sequence of linear PDEs called the  generalized Hamilton-Jacobi equation (GHJE) and the point-wise optimization  of taking the   minimizer  of the Hamiltonian.
 The  GHJE is linear since the feedback control is given from the previous iteration.
  Therefore traditional numerical PDE methods such as  Galerkin spectral method
 (Successive Galerkin approximation \cite{Bea1998}) for small $d$ can be applied to solve these GHJEs.
 If the dimension is moderately large,  various methods based on  low-dimensional representation {\it ansatz} 
 such as polynomial or low-rank tensor product  \cite{7040310,KK2018,Oster2019} 
 usually work in many applications. For very high dimensional settings, the use of deep neural network
 is prevalent. 
  This two-step procedure in the successive approximation shares exactly the same idea as  {\it policy iteration} in the reinforcement learning \cite{sutton2018reinforcement,Bertsekas2019}.
  
 When the Hamiltonian minimization has a closed-form,   the  HJE 
 can be solved directly by   using grids 
  and finite difference discretization, such as 
 the Dijkstra-type methods like level method \cite{OSHER198812}, fast marching \cite{FastMarching1994}, 
  fast sweeping method \cite{FastSweep2003}, 
  and semi-Lagrangian approximation scheme  \cite{falcone2013semi}. But these grid-based methods suffer from the curse of  dimensionality, i.e., they generally scale up exponentially    with increases in dimension in the space.  There have been  tremendous   
  advances in numerical  methods and empirical  tests   now  for high dimensional PDEs 
   by taking advantage of neural networks to represent high dimensional functions.
  For the HJE in deterministic optimal control problems,    various approaches have been proposed
  and most of them are based on certain forms of Lagrangian formulation equivalent to the HJE.
  For example, 
under certain conditions (such as 
convexity) on the Hamiltonian or the terminal cost, the inspiring works  in
\cite{DarbonOsher2016RMS,ChowOsher2017JSC,ChowOshersplitting2018,ChowOsherADMM2018,ChowOsher2019JSC}  rely on  the  generalized Lax and  Hopf  formulas 
to   transform   the computation of the value function at an arbitrarily given  space-time point   
as  an optimization problem for  the terminal value of the Lagrangian multiplier $p$ \footnote{also called co-state or adjoint variable.},  subject to the  characteristics equation of Hamiltonian ordinary differential equation (ODE)
for $(x,p)$.
In a similar but different style, 
\cite{Kang2015causality} worked with the 
Pontryagin’s maximum  principle (PMP)
by  considering  the characteristic equations of   
 the state  $x(t)$ and the co-state  $\lambda(t)$ 
 as  a two-point boundary value problem  (BVP).
 The optimal feedback control, the value function and
the gradient of the value function on the optimal  trajectories
are computed first by solving the BVP numerically.
With the data generated from the BVP on  characteristic trajectories,
the HJE solution is then  interpolated at any   point
by either using sparse grid  interplants \cite{Kang2017} or minimizing the mean square errors
\cite{Izzo01145,Kang2019,Kang2021QRnet}. This step is the standard form of supervised learning, 
and  the numerical accuracy is determined by the quality of the interpolant  and the amount of the training data. 
For a very large $d$, the curse of dimensionality is mitigated  by the supreme power of deep neural network
in deep learning.    For the review of solving 
high dimensional PDE including the HJE,
refer to the recent review paper \cite{2020HighPDEReivewE}.
 
 In the present  paper, 
 we  shall develop a new formulation as an alternative to the HJE for the optimal control theory
 and this formulation   focuses on the gradient of the value function, 
 instead of the value function itself.
 For brevity, we call this vector-valued gradient function
   as {\it  value-gradient} function.
One of our motivations is that   in practical applications, the optimal feedback control or the optimal policy, 
is  the ultimate goal of the decision maker and   this optimal policy is completely determined by  the  value-gradient  in minimizing  the Hamiltonian. 
Another    motivation to investigate this   value-gradient  function comes from the training step 
where we want to provide  the data not only for the value function but also for its gradient
to enhance the accuracy of the interpolation.
  Our new formulation has the following nice properties:
(1) The proposed method has a linear convergence rate.
 (2) It is  a closed system of  PDEs for components of  vector-valued 
  value-gradient functions. 
 (3) This system is essentially decoupled in each component and is perfectly suitable for 
 parallel computing in policy iteration. 
 (4) Each PDE in the system has the exact same characteristics equation as
  the original HJE for the value function.
(5) After simulating   characteristics curves, 
we obtain the results of the value function and the  value-gradient function simultaneously on the characteristics curves
to  train the value function in the whole space.

We demonstrate our novel method by
  focusing on  the infinite-horizon discounted deterministic optimal control problem.
This setup will simplify our presentation since the HJE is stationary in time. In addition, we assume the value functions in concern are sufficiently smooth, at least $C^{2}$,
which can be guaranteed by imposing appropriate conditions on the state dynamics and the running cost functions.  So, we can interpret the  system of PDEs
for  value-gradient functions in the classical sense. 

We develop the numerical algorithm based on  the {\it policy iteration} \cite{sutton2018reinforcement} 
and the method of characteristics \cite{evans_2010}. Under an assumption on the dynamics and the payoff function, we show by mathematical induction that the value-gradient function at each iteration and its corresponding control are uniformly bounded by linearly growth functions, while the gradients of these two functions are uniformly bounded by constants. With \cref{lem:iter} and \cref{lem:bound}, this algorithm is proved to converge linearly in $L_\alpha^2$ sense (see \cref{thm: main}) for some suitable exponent $\alpha$ in a weight function. As for the algorithm, in each policy iteration, only linear equations are solved on the characteristics curves
starting from a collection of initial states.
The interpolation or the training step is to minimize the convex combination of the  mean  squared errors 
of both the value function and the  value-gradient functions.  
One prominent benefit of our algorithm is that
we can combine the data from both value and value-gradient
since they share the same characteristics.
So, the output of our algorithm is still the value function, which is 
approximated by any type of non-parametric functions like 
radial basis functions or   neural networks.
The  value-gradient function is obtained by automatic differentiation. 
Our extensive numerical examples 
confirm that  the accuracy  and the robustness  
are both significantly improved in comparison to 
only  solving the HJE in  the same 
policy iteration method. Finally, we remark that a preliminary idea in this paper has appeared in the authors' recent manuscript  \cite{2020handbookXZ} on review of  machine learning and control theory. Here we present the full development and propose the detailed numerical methods based on machine-learning, with emphasis on theoretical proof of $L_\alpha^2$ convergence.

The paper is organized as follows. 
Section \ref{sec:2} is   the problem setup for the optimal control problem 
and  the review of
HJEs  and  Pontryagin’s maximum principle (PMP) with their connections to the theory of optimal control . 
Section \ref{sec:main} is  our new formulation in terms of the  value-gradient function with the convergence analysis of the iterative scheme.
Section \ref{sec:nm} presents our main algorithms and Section \ref{sec:nex} is our numerical examples. Section \ref{sec:con} includes   some discussions on generalization and 
ends with
a brief  conclusion.

\section{Problem Formulation and Review of HJE}
\label{sec:2}

\subsection{Discounted deterministic control problem in infinite horizon}
\label{ssec:control}
The   optimal control problem in our study aims at minimizing the cost function with a discount factor $\rho\geq 0$:
\begin{equation}
\label{def:payoff}
J_{x}(u(\cdot)):= \int_{0}^{+\infty}e^{-\rho t}\:l(x(t),u(t)) \dt
\end{equation}
subject to the state equation
\begin{equation}
\label{ODE}
\begin{cases}
\d x(t)=g(x(t),u(t))\dt \\
x(0)=x,
\end{cases}
\end{equation}
where $x(\cdot):\mathbb{R}\rightarrow \mathbb{R}^d$ is the state variable, $u(\cdot):\mathbb{R}\rightarrow \mathbb{R}^p$ is the control function such that $\int_{0}^{+\infty}e^{-\rho t}\| l(x(t),u(t))\| \dt<\infty$ and $u(t)\in\mathcal{U}_{ad}$, a.e. $t$, in which $\mathcal{U}_{ad}$ is an non-empty closed 
convex subset of $\mathbb{R}^{p}$. 

A feedback control $u$ means there is a function $a(\cdot)$ in the state variable $x$:  $\Real^d\to \Real^p$,  such that the control $u(t)=a(x(t))$ with $x(t)$ satisfying  the   ODE \eqref{ODE}
in the autonomous form: $\d x(t) = g(x(t), a(x(t)))\dt$. Throughout  the paper, we shall use $g(x,a)$ and $g(x,u)$ interchangeably for the function $g$. Also $g(\cdot,\cdot):\mathbb{R}^d\times \mathbb{R}^p\rightarrow \mathbb{R}^d$ and $l(\cdot,\cdot):\mathbb{R}^d\times \mathbb{R}^p\rightarrow \mathbb{R}$ have assumptions as below \cite{bensoussan2018estimation}:

\begin{assumption}\label{ass:A1}
There exist some positive constants $\bar{g}$,  $\bar{g}_2$, $\bar{l}$, $\bar{l}_1$, $\bar{l}_2$, $c_0$, $c_s$ and a matrix $c$ in $\Real^{p\times p}$ 
with its norm $\|c\|=\bar{c}$, such that 
	\begin{itemize}
		\item [\textbf{A1.}] $g(x,a)=g_1(x)+c^\top a:\mathbb{R}^{d}\times\mathbb{R}^{p}\to\mathbb{R}^{d}$ and 
		\begin{equation}
		\begin{split}
		&\|g(x,a)\|\leq\bar{g}\left(1+\|x\|+\|a\|\right); \|D_x g(x,a)\|\leq \bar{g}; \\ &\sum_{i=1}^d\|D_{x}(\partial_{x_i} g(x,a))\|\leq \frac{\bar{g}_2}{1+\|x\|};
		\end{split}
		\end{equation}
		where $x_i$ is the $i$-th component of $x$. 

		\item [\textbf{A2.}] $l(x,a): \mathbb{R}^{d}\times\mathbb{R}^{p}\to\mathbb{R}$ is strictly convex and satisfies 
        \begin{equation}
\begin{split}
		&\|l(x,a)\|\leq \bar{l}\left(1+\|x\|^{2}+\|a\|^{2}\right);\\
		&\|l(x,a)-l(x',a')\|\leq \bar{l}\big( (1+ \max(\|x\|,\|x'\|)+ \max(\|a\|,\|a'\|) )\\&\qquad
		 \qquad\qquad\qquad\quad (\|x-x'\|+\|a-a'\|)\big);\\
		&\|\nabla_a l(x,a)\| \geq  \bar{l}_{1}\|a\|-c_{0};\ \left\|(\nabla_a\nabla_a^\top) l(x,a)\right\|\geq c_s;
        \end{split}\end{equation}
and the norm of all the second order derivatives, i.e. $\|(\nabla_x\nabla_a^\top) l(x,a)\|$,\\ $\|(\nabla_a\nabla_x^\top) l(x,a)\|$ and $\|(\nabla_x\nabla_x^\top) l(x,a)\|$ are bounded by $\bar{l}_2$ from above. 
	\end{itemize}
\end{assumption}
The value function  $\Phi(x) $ is defined by 	\begin{equation}
	\label{det_value*}
	\Phi(x) = \inf_{a(\cdot)\in \mathcal{U}_{ad}}J_{x}(a(\cdot)).
	\end{equation}	


{\bf Notations:}
$\nabla$ and $(\nabla\nabla^\top)$ refer to the gradient and Hessian matrix, respectively, of a scalar function. In general, $D$ is used for the derivatives of a
vector-valued function, i.e., the Jacobi matrix. For example, 
$D_x g(x,a)$ refers to 
the   Jacobi matrix in $x$ variable with $(i,j)$ entry $ \frac{\partial g_i}{\partial{x_j}} (x,a) $.
$D^\tr_x g$ means the transpose of the Jacobi matrix $D_x g$. 
	
\subsection{Hamilton-Jacobi equation} \label{ssec:HJE-v-p}
 By the theory of Dynamic Programming, the value function $\Phi(\cdot)$ of \eqref{det_value*} satisfies the 
 (stationary) Hamilton-Jacobi equation (HJE)	
\begin{equation}
\label{lin_HJE}
\rho\Phi(x) = g(x,\hat{a}(x))\cdot \nabla \Phi(x)+l(x,\hat{a}(x)),
\end{equation}
where   the optimal policy is 
\begin{equation}\label{eq:opt-v}
\hat{a}(x)\in \argmin_a~ [ g(x,a)\cdot \nabla \Phi(x)+l(x,a)].
\end{equation}
We drop  out the possible constraint $a\in \mathcal{U}_{ad}$
under $\argmin$ or $\min$ for convenience.
The first equation \eqref{lin_HJE} is 
a {\it linear} stationary  hyperbolic PDE with advection velocity field
$g(x,\hat{a}(x))$. It is the convention to introduce  the Hamiltonian $$ {H}(x, \lambda,a):=g(x,a)\cdot\lambda+l(x,a)$$
and the HJE can be written as
\begin{equation}\label{equ:HJE_compect}\rho \Phi (x)=\min_{a }H(x,\nabla \Phi,a).\end{equation}
     

\subsection{Pontryagin’s maximum  principle (PMP)}
PMP generally refers to 
the first-order necessary
optimality conditions for problems of optimal control \cite{Pontryagin1987}.
For the optimal control problem specified in Section \ref{ssec:control}, the PMP
takes the following form 
\begin{subequations}
\begin{align}
\label{eq:5a}
  \frac{\d }{\dt}  x^{*}(t)&=  H_{\lambda}(x^{*}, \lambda^{*}, u^{*}) =g(x^{*},  u^{*});
\\
\label{eq:5b}
  \frac{\d }{\dt}  (e^{-\rho t} \lambda^{*}(t))&= - e^{-\rho t}  H_{x}(x^{*}, \lambda^{*}, u^{*})
  \notag
  \\ &=- e^{-\rho t}  [ \nabla_{x} l(x^{*}, u^{*}) +   D^{\tr}_{x}g(x^{*},  u^{*})  \lambda^{*}]; \\
\label{eq:5c}
  \frac{\d }{\dt}  (e^{-\rho t}v^{*}(t))&= -e^{-\rho t} l(x^{*}, u^{*}) ; 
\end{align}\end{subequations}
where  $u^{*}(t)\in \mathcal{U}_{ad}$ is defined by $\hat{a}(x^{*}(t))$ in \eqref{eq:opt-v}, i.e., $$\displaystyle u^{*}(t)=\argmin_{u} H(x^{*}(t), u, \lambda^{*}(t)).$$ $\lambda^*(t)$ is the co-state or adjoint variable and $v^*(t)$ is the cost.
Note that 
\eqref{eq:5a} has the initial condition $x^{*}(0)=x$ while 
\eqref{eq:5b} and \eqref{eq:5c}  have the terminal condition vanishing at infinity: 
$ e^{-\rho t} \lambda^*(t)\to 0 $ and $  e^{-\rho t} v^*(t)\to 0 $.

\subsection{Value iteration and policy iteration for HJE}
\label{ssec:PI}
Based on  the   equations
\eqref{lin_HJE} and \eqref{eq:opt-v} as a fixed-point problem for the pair of 
$\Phi$ and $\hat{a}$, many iterative computational methods have been developed in history \cite{Bellman1957,Bellman1957Book,Howzard1960}. 
They can roughly be divided into two categories:
value iteration and policy iteration, which are   central  concepts  
  in reinforcement learning \cite{sutton2018reinforcement}.

   In our model of equation \eqref{equ:HJE_compect}, the value iteration,
   roughly speaking, refers to   the sequence 
   of functions  recursively defined by 
 \begin{equation}\label{eq:value-iter}
\Phi^{(k+1)}(x) := \rho^{-1}\min_{a}
\left [ g(x,a)\cdot \nabla \Phi^{(k)} (x)+l(x,a)\right],~\forall x.\end{equation}
By contrast, the policy iteration  
requires to solve the so-called Generalized HJE. It starts with
an initial policy function $a^{(0)}$ and 
runs the iteration from $a^{(k)}$ to $a^{(k+1)}$ as follows.

\begin{algorithm}
\caption{Policy Iteration (Successive Approximation) for HJE}
\begin{enumerate}
\item Solve the linear PDE \eqref{lin_HJE} for the value function ${\Phi}^{(k+1)}$ with the given policy $\hat{a}=a^{(k)}$: 
\begin{equation} 
\label{779}
\rho {\Phi}^{(k+1)}(x) = g(x,a^{(k)}(x))\cdot \nabla {\Phi}^{(k+1)}(x)+l(x,a^{(k)}(x)).
\end{equation}
This linear equation is referred to as Generalized HJE.\\
\item Then, ${a}^{(k+1)}$
is obtained from the optimization sub-problem \eqref{eq:opt-v} point-wisely for each $x$:
\begin{equation*}
{a}^{(k+1)}(x):=\argmin_a\, [ g(x,a)\cdot \nabla {\Phi}^{(k+1)}(x)+l(x,a)].
\end{equation*}
\end{enumerate}
\end{algorithm}

 Step 1 is usually referred to as   {\it policy evaluation}.
  Step 2 is usually referred to as   {\it policy improvement} 
and $a^{(k+1)}$ is the {\it greedy policy}.
 
 The policy iteration  is known to have super-linear convergence in many   cases provided the initial guess is sufficiently close to the solution and generally behaves better than the value iteration
 \cite{Kalise2015SISC}. The convergence of policy iteration can be found in  \cite{Puterman1979}. 

\section{Formulation  for  Value-Gradient functions}
\label{sec:main}

We start to present our main theoretic results 
and derive the new system of PDEs for the gradient of the value function.

\subsection{Equation for the value-gradient functions}

Define the  {\it  value-gradient} function:
 $$\lambda(x)=\nabla \Phi(x),$$
 then 
the HJE \eqref{lin_HJE} reads
\begin{equation}
\rho\Phi(x) = g(x,\hat{a}(x))\cdot \lambda(x)+l(x,\hat{a}(x)).
\end{equation}
where $x=(x_1,\ldots,x_d)\in\Real^d$.
Now differentiating  both sides w.r.t. $x_i$, we have
\begin{align*}
   \rho \lambda_i (x)
    &=  \sum_n \lambda_n(x)\set{ \left ( \frac{\partial}{\partial{x_i}}+\sum_j \frac{\partial \hat{a}_j}{\partial x_i}
    \frac{\partial }{\partial{a_j}}\right) g_n(x,\hat{a}(x))  }
    \\
    &+ \sum_n g_n(x,\hat{a}(x)) \frac{\partial\lambda_n}{\partial{x_i}} (x)+\left ( \frac{\partial}{\partial{x_i}}+\sum_j \frac{\partial \hat{a}_j}{\partial x_i}
    \frac{\partial }{\partial{a_j}}\right)   l (x,\hat{a}(x)).
\end{align*}
where $\lambda_i$ and $\hat{a}_i$ are the $i$-th component of $\lambda$ and $\hat{a}$ respectively. We assume that the Hamiltonian minimization \eqref{eq:opt-v}
has the unique minimizer $\hat{a}(x)$ which is continuously differential.  
Then the minimizer $\hat{a}(x)$ satisfies the first order necessary condition:
\begin{equation}
  \sum_n \frac{\partial g_n}{\partial {a_j}} (x,\hat{a}) \lambda_n (x)+ \frac{\partial l}{\partial{a_j}}(x,\hat{a})=0, ~~\quad \forall j.
\end{equation}
With the both equalities  above, we have   that $\lambda(x)=(\lambda_1,\ldots,\lambda_d)$  satisfies the following    system of linear hyperbolic
PDEs  
\begin{align}
    \rho \lambda_i 
   =&  \sum_n g_n  \frac{\partial\lambda_n}{\partial{x_i}} 
   +\sum_n   \lambda_n\frac{\partial g_n}{\partial{x_i}}  
+  \frac{\partial l }{\partial{x_i}},
\label{eq:lam-pde-c}
\end{align}
or in the compact form 
\begin{equation}
\begin{split}
\label{eq:lam-pde}
\rho\lambda(x) = & D^\tr \lambda(x) g(x,\hat{a}(x))+D_x^\tr g(x,\hat{a}(x))\lambda(x)+\nabla_xl(x,\hat{a}(x)),
\end{split}
\end{equation}
and  $\hat{a}(x)$ defined  by \eqref{eq:opt-v} can now be written as 
 \begin{equation}\label{eq:opt-v-grad}
\hat{a}(x)= \argmin_a~ [ g(x,a)\cdot\lambda (x)+l(x,a)].
\end{equation}
\eqref{eq:lam-pde} and \eqref{eq:opt-v-grad} 
are coupled as \eqref{lin_HJE} and \eqref{eq:opt-v} 
in the HJE and they serve as the foundation   for the new development of the algorithms,
based on the policy iteration method.
 
 Given a policy $\hat{a}$, the system of coupled PDEs \eqref{eq:lam-pde} is a closed form involving only the dynamic function $g$ and the 
 running cost function $l$; it does not need other information like the value function.
 It plays the similar role to  the Generalized HJE  \eqref{lin_HJE} for the value function $\Phi$.
  \eqref {eq:lam-pde} and \eqref{eq:opt-v-grad} together can replace the 
 traditional dynamic programming in the form of  HJE
 if $\Phi$ is sufficiently smooth.
 The main focus of our work is 
 how to develop efficient numerical methods from this formulation of 
 the gradient of the value function.
 
 Since $\lambda(x)$ is  the gradient of the value function $\Phi$,
 so $D\lambda(x) = \nabla^2 \Phi(x)$  should be symmetric, i.e., $D\lambda =D^\tr \lambda$. 
Then the value-gradient satisfies
\begin{equation}
\label{eq:lambda-c}
\begin{split}
    \rho \lambda_i (x)
&= \nabla  \lambda_i(x) \cdot g (x, \hat{a}(x))
   +\sum_n   \frac{\partial g_n}{\partial{x_i}}   \lambda_n(x)
+  \frac{\partial l }{\partial{x_i}} (x,\hat{a}(x))
\end{split}
\end{equation}
or 
\begin{equation}
\label{eq:lambda}
\begin{split}
\rho\lambda(x) &=  (D  \lambda) g + (D_x^\tr  g) \lambda(x)+ \nabla_xl .
\end{split}
\end{equation}
where $\hat{a}(x)$ is defined in \eqref{eq:opt-v-grad}
as the unique minimizer of the Hamiltonian $H(x,\lambda(x))$.
In addition,  if $\lambda(x)$ satisfies the systems of PDEs \eqref{eq:lambda-c}, then 
for $x^*$ as the optimal trajectory satisfying  the characteristics equation \eqref{eq:5a},
then $\lambda^*(t):=\lambda( x^*(t))$ satisfies the   equation \eqref{eq:5b}. The conclusion that $\lambda^*(t):=\lambda( x^*(t))$ satisfies  \eqref{eq:5b}
follows from the following fact   
$$\frac{\d}{\dt} \lambda(t) = (D\lambda) g  = \rho \lambda^*(t) -
\left[(D_x^\tr  g) \lambda(x^*)+ \nabla_xl\right].$$

The advantage of     equation \eqref{eq:lambda-c} over the equation \eqref{eq:lam-pde-c} 
is that the advection terms $D\lambda_i \cdot g $  are now decoupled for each 
component $i$ and   the same as in the GHJE \eqref{lin_HJE}. This property will 
allow us to develop a fully paralleled iterative method.

\subsection{Policy iteration  for value gradient}
The natural  idea to solve the PDEs for \eqref{eq:lambda-c} and 
the minimization for $\hat{a}$ in \eqref{eq:opt-v-grad}
is the policy iteration by recursively solving \eqref{eq:lambda-c}
and \eqref{eq:opt-v-grad} like the policy iteration for the value function
dictated in Section \ref{ssec:PI}:
 Start with
an initial policy function $a^{(0)}$ with $k=0$;
\begin{enumerate}
    \item Solve the system \eqref{eq:lambda-c} with the given policy $\hat{a}=a^{(k)}$  to have 
     ${\lambda}^{(k+1)}$;
\item   ${a}^{(k+1)}$
is obtained from the optimization sub-problem \eqref{eq:opt-v}.
\end{enumerate}
This iteration will produce a sequence of pairs 
$(a^{(k)},\lambda^{(k)})$, $k\geq 1 $. 
 The main task is then to solve \eqref{eq:lambda-c} (or \eqref{eq:lambda}), the system of linear PDEs for $\lambda(x)$,
with a given policy $a$. We will first propose  the method for this  system of linear PDEs and  
more details are given in Section \ref{sec:nm}. We summarize our main algorithm {\bf poicy iteration based on $\lambda$ (PI-lambda)} as below.

\begin{algorithm}
\caption{{\bf   PI-lambda}: policy iteration based on $\lambda$}
\begin{enumerate}
\item For $i=1,\ldots, d$, solve the PDE for each $\lambda^{(k+1)}_i$ in parallel
\begin{align}
    \rho \lambda^{(k+1)}_i(x)-&D\lambda^{(k+1)}_i (x) \cdot g(x, {a}^{(k)}(x))\notag\\
    &=\sum_n   \frac{\partial g_n}{\partial{x_i}}  \lambda^{(k)}_n(x)+   \frac{\partial l }{\partial{x_i}}(x, {a}^{(k)}(x)), \label{eq:iterlambda2}
\end{align}
 with the given policy $\hat{a}=a^{(k)}$  to have 
     ${\lambda}^{(k+1)}=({\lambda}^{(k+1)}_1,\ldots,{\lambda}^{(k+1)}_d)$;
\item ${a}^{(k+1)}$
is obtained from the optimization sub-problem \eqref{eq:opt-v}:
\begin{equation*}
{a}^{(k+1)}(x)=\argmin_a\, [ g(x,a)\cdot \lambda^{(k+1)}(x)+l(x,a)].
\end{equation*}
\end{enumerate}
\end{algorithm}


\medskip

The   merit 
 of  \eqref{eq:iterlambda2} is that the components of $\lambda^{(k+1)}(x)$ are completely decoupled and can be solved in parallel.  Each equation of these $d$ components is exactly in the same 
form as the GHJE \eqref{779} for the value function.
So the method of characteristics, which will be detailed 
in the next section,  can be applied to both the GHJE \eqref{779} and 
the system \eqref{eq:iterlambda2}.




\subsection{Convergence analysis for PI-lambda}
In this subsection, we will state and prove our main theorem \cref{thm: main} that  PI-lambda algorithm converges linearly in  $L_\alpha^2$ sense (see \eqref{equ:sense}) for a suitable choice of exponent $\alpha$ in a weight factor. The proof will need two important lemmas, \cref{lem:iter} and \cref{lem:bound}, which are proved in the supplymentary materials. $\lambda^{(k)}(x)$ and $a^{(k)}(x)$ stand for the value-gradient and control function of the $k$-th iteration in  PI-lambda respectively.

\begin{lemma}\label{lem:iter}
Under Assumptions \ref{ass:A1}, at the $k$-th iteration of value-gradient, if there exist constants $\bar{\lambda}^{(k)}$, $\bar{\lambda}^{\prime(k)}$,  $\bar{a}^{(k)}$, $\bar{a}^{\prime(k)}$ such that
\begin{align*}
    \|\lambda^{(k)}(x)\|  &\leq \bar{\lambda}^{(k)}(1+\|x\|), \left\|D\lambda^{(k)}(x)\right\| \leq \bar{\lambda}^{\prime(k)},\\
    \|a^{(k)}(x)\|  &\leq \bar{a}^{(k)} (1+\|x\|),\ \left\|Da^{(k)}(x)\right\| \leq \bar{a}^{\prime(k)},
\end{align*}
and if
$$\rho>\bar{g}(1+\bar{a}^{(k)})+\bar{c}\bar{a}^{\prime (k)},$$then
\begin{align*}
    \|\lambda^{(k+1)}(x)\|  &\leq \bar{\lambda}^{(k+1)}(1+\|x\|), \left\|D\lambda^{(k+1)}(x)\right\| \leq \bar{\lambda}^{\prime(k+1)},\\
     \|a^{(k+1)}(x)\|  &\leq \bar{a}^{(k+1)} (1+\|x\|),\ \left\|Da^{(k+1)}(x)\right\| \leq \bar{a}^{\prime(k+1)},
\end{align*}
where the constants
\begin{align}
    \bar{\lambda}^{(k+1)}&=\frac{\bar{l}+\bar{l} \bar{a}^{(k)}+\bar{g}\bar{\lambda}^{(k)}}{\rho-\bar{g}(1+\bar{a}^{(k)})}>0,\label{equ:barlam}\\
        \bar{\lambda}^{\prime(k+1)}&=\frac{\bar{l}_{2}+\bar{l}_{2}\bar{a}'^{(k)}+\bar{g}_2\bar{\lambda}^{(k)}+\bar{g}\bar{\lambda}'^{(k)}}{\rho-(\bar{g}+\bar{c}\bar{a}'^{(k)})}>0,\label{equ:barlamprime}\\
    \bar{a}^{(k+1)}&=\frac{\bar{c} \bar{\lambda}^{(k+1)}+c_0}{\bar{l}_1},\label{equ:bara}\\
    \bar{a}'^{(k+1)}&=\frac{\bar{l}_{2}+\bar{c}\bar{\lambda}^{\prime(k+1)}}{c_s}.\label{equ:baraprime}
\end{align}
\end{lemma}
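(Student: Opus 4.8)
The plan is to establish the four estimates by combining a characteristic representation of the solution $\lambda^{(k+1)}$ of the linear transport system \eqref{eq:iterlambda2} with an implicit-function argument for the greedy control $a^{(k+1)}$. First I would recast \eqref{eq:iterlambda2} along characteristics: letting $x(\cdot)$ solve the forward dynamics $\dot x(s)=g(x(s),a^{(k)}(x(s)))$ with $x(0)=x$, each scalar equation becomes $\frac{\d}{\d s}\lambda_i^{(k+1)}(x(s)) = \rho\,\lambda_i^{(k+1)}(x(s)) - F_i^{(k)}(x(s))$, with the source $F^{(k)} := (D_x^\tr g)\lambda^{(k)} + \nabla_x l$ evaluated at $(x,a^{(k)}(x))$. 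Imposing the physically required boundedness as $s\to\infty$ selects the discounted-integral representation $\lambda^{(k+1)}(x)=\int_0^\infty e^{-\rho s}F^{(k)}(x(s))\,\d s$, which is the object I would estimate.

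For the growth bound, A1 gives $\|g(x,a^{(k)}(x))\|\le \bar g(1+\bar a^{(k)})(1+\|x\|)$, so Gr\"onwall yields $1+\|x(s)\|\le (1+\|x\|)\,e^{\bar g(1+\bar a^{(k)})s}$. The Lipschitz part of A2 controls the running-cost gradient, $\|\nabla_x l(x,a^{(k)}(x))\|\le \bar l(1+\bar a^{(k)})(1+\|x\|)$, while $\|D_x g\|\le \bar g$ bounds the first term, giving $\|F^{(k)}(x)\|\le[\bar g\bar\lambda^{(k)}+\bar l(1+\bar a^{(k)})](1+\|x\|)$. Inserting these into the representation and integrating $e^{-\rho s}$ against the exponential growth produces exactly \eqref{equ:barlam}, the integral converging precisely because $\rho>\bar g(1+\bar a^{(k)})$.

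For $D\lambda^{(k+1)}$ I would differentiate the representation under the integral sign, introducing the flow sensitivity $\Psi(s)=\partial x(s)/\partial x$, which solves the variational equation $\dot\Psi=[D_x g+(D_a g)Da^{(k)}]\Psi$, $\Psi(0)=I$; since $D_a g=c^\tr$ and $\|Da^{(k)}\|\le\bar a'^{(k)}$, Gr\"onwall gives $\|\Psi(s)\|\le e^{(\bar g+\bar c\bar a'^{(k)})s}$. A term-by-term estimate of $\|DF^{(k)}\|$ — using the second-order bound $\sum_i\|D_x(\partial_{x_i}g)\|\le\bar g_2/(1+\|x\|)$ from A1 (which cancels the linear growth of $\lambda^{(k)}$) and the Hessian bounds $\bar l_2$ from A2 — yields the constant $\bar l_2+\bar l_2\bar a'^{(k)}+\bar g_2\bar\lambda^{(k)}+\bar g\bar\lambda'^{(k)}$, and integrating $e^{-\rho s}\|\Psi(s)\|$ against it gives \eqref{equ:barlamprime} whenever $\rho>\bar g+\bar c\bar a'^{(k)}$. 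Both threshold conditions are subsumed by the single hypothesis $\rho>\bar g(1+\bar a^{(k)})+\bar c\bar a'^{(k)}$.

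The control estimates come from the first-order condition for \eqref{eq:opt-v}, namely $c\,\lambda^{(k+1)}(x)+\nabla_a l(x,a^{(k+1)}(x))=0$. Together with the coercivity $\|\nabla_a l(x,a)\|\ge\bar l_1\|a\|-c_0$ of A2 this gives $\bar l_1\|a^{(k+1)}\|-c_0\le\bar c\bar\lambda^{(k+1)}(1+\|x\|)$, hence \eqref{equ:bara}. Differentiating the optimality condition in $x$ and solving for the Jacobian gives $Da^{(k+1)}=-(\nabla_a\nabla_a^\tr l)^{-1}[c\,D\lambda^{(k+1)}+\nabla_a\nabla_x^\tr l]$, and the bound $\|(\nabla_a\nabla_a^\tr l)^{-1}\|\le 1/c_s$ then yields \eqref{equ:baraprime}. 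The main obstacle I anticipate is the rigorous justification of differentiating under the integral for $D\lambda^{(k+1)}$: it requires uniform integrability of $e^{-\rho s}\|DF^{(k)}(x(s))\|\,\|\Psi(s)\|$, which is exactly where the sharp threshold $\rho>\bar g+\bar c\bar a'^{(k)}$ is forced, and the control-Jacobian step needs $c_s$ read as the strong-convexity modulus (smallest eigenvalue of $\nabla_a\nabla_a^\tr l$) rather than a mere operator-norm lower bound.
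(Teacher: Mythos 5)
Your proposal follows essentially the same route as the paper's proof: the discounted characteristic representation of $\lambda^{(k+1)}$ with Gr\"onwall control of $1+\|X^{(k)}(s)\|$, differentiation under the integral with the flow sensitivity $D_xX^{(k)}(s)$ bounded by $e^{(\bar g+\bar c\bar a'^{(k)})s}$, the cancellation of the linear growth of $\lambda^{(k)}$ by the $\bar g_2/(1+\|x\|)$ decay of the second derivatives of $g$, and the first-order optimality condition combined with the coercivity and strong-convexity bounds on $\nabla_a l$ for the control estimates. The argument is correct and the technical caveats you flag (justifying the vanishing boundary term and reading $c_s$ as a strong-convexity modulus) are exactly the points the paper handles via its polynomial-growth ansatz and the invertibility of $\nabla_a\nabla_a^\top l$.
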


\begin{proof}At the $k$-th iteration, suppose that 
\begin{align*}
    \|\lambda^{(k)}(x)\|&\leq \bar{\lambda}^{(k)}(1+\|x\|),\ \left\|D\lambda^{(k)}(x)\right\|\leq \bar{\lambda}'^{(k)}\\
    \|a^{(k)}(x)\|  &\leq \bar{a}^{(k)} (1+\|x\|),\ \left\|Da^{(k)}(x)\right\| \leq \bar{a}'^{(k)},
\end{align*}
 where $\bar{\lambda}^{(k)},\bar{\lambda}'^{(k)},\bar{a}^{(k)},\bar{a}'^{(k)}$ are all constants.\\
 We first bound $\|X^{(k)}(t)\|$. Here $X^{(k)}(t)$ is a trajectory with dynamic system
 \begin{equation*}
 \begin{split}
 \dot{X}^{(k)}(t)&=g(X^{(k)}(t),a^{(k)}(X^{(k)}(t))),\\ X^{(k)}(0)&=x\in\mathbb{R}^p,    
 \end{split}
\end{equation*}
And we have
\begin{equation}\label{equ:gron_1}
\begin{split}
\|X^{(k)}(t)\|+1=&\left\|x+\int_0^t \dot{X}^{(k)}(s)\d s\right\|+1 \leq \|x\|+1+\int^t_0\left\|\dot{X}^{(k)}(s)\right\|\d s\\
\leq &\|x\|+1+\int^t_0\|g(X^{(k)}(s))\|\d s\\
\leq &\|x\|+1+\int^t_0\bar{g} \left(1+\|X^{(k)}(s)\|+\|a^{(k)}(X^{(k)}(s))\|\right)\d s\\
\leq &\|x\|+1+\int^t_0\bar{g} \left(1+\|X^{(k)}(s)\|+\bar{a}^{(k)}(1+\|X^{(k)}(s)\|)\right)\d s\\
\leq &\|x\|+1+\int^t_0\bar{g} (1+\bar{a}^{(k)})(\|X^{(k)}(s)\|+1)\d s
\end{split}
\end{equation}
By the Grönwall's inequality
\begin{equation}
\eqref{equ:gron_1} \leq e^{\bar{g}(1+\bar{a}^{(k)})t}(\|x\|+1).
\end{equation}
For any $k=0,1,2,...$

\begin{equation*}
\begin{split}
    \d\left[e^{-{\rho t}}\lambda^{(k+1)}\left(X^{(k)}(t)\right)\right]=&-e^{-\rho t}\bigg[\nabla_x l\left(X^{(k)}(t),a^{(k)}\left(X^{(k)}(t)\right)\right)\\&+D_x g\left(X^{(k)},a^{(k)}\left(X^{(k)}(t)\right)\right)\lambda^{(k)}(X^{(k)}(t))\bigg] \d t.
\end{split}
\end{equation*}
For any initial $x$, integrate on both sides from $0$ to $\infty$ w.r.t $t$, we have
\begin{equation}
\begin{split}
\label{equ:lam_tra}
    \lambda^{(k+1)}(x)=&\lim_{t\to \infty} e^{-\rho t}\lambda^{(k+1)}(X^{(k)}(t))+\int^{+\infty}_0 e^{-\rho s}\bigg[\nabla_x l(X^{(k)}(s),\\&a^{(k)}(X^{(k)}(s)))
    +D_x g(X^{(k)}(s),a^{(k)}(X^{(k)})(s))\lambda^{(k)}(X^{(k)}(s))\bigg]\d s
    \end{split}
\end{equation}
We consider the physical solution, and $\lambda^{(k+1)}$ is at most polynomial growth. Here we apply method of undetermined coefficients.  Suppose $\lambda^{(k+1)}(x)$ satisfies $$ \|\lambda^{(k+1)}(X^{(k)}(t))\|\leq M_{k}(1+\|X^{(k)}(t)\|^{m_k})$$ 
when $t\to \infty$, where $M_{k}\geq 0$ and $m_k\geq 0$ are constants to be determined. Take norm on both sides of $\eqref{equ:lam_tra}$ yield
\begin{equation*}
\begin{split}
   \|\lambda^{(k+1)}(x)\| \leq& \lim_{t\to \infty} e^{-\rho t}M_{k}(1+\|X^{(k)}(t)\|^{m_k})+\int^{+\infty}_0\bigg\| e^{-\rho s}\bigg[\nabla_x l(X^{(k)},a^{(k)}(X^{(k)}(s)))\\&+D_x g(X^{(k)}(s),a^{(k)}(X^{(k)})(s))\lambda^{(k)}(X^{(k)}(s))\bigg]\bigg\|\d s.
 \end{split}
\end{equation*} Select $\rho$ such that $\rho> m_k(\bar{g}(1+\bar{a}^{(k)}))$, then we have
$$\lim_{t\to \infty} e^{-\rho t}M_{k}(1+\|X^{(k)}(t)\|^{m_k})=0.$$ According to the assumption, there holds
\begin{equation}
    \left\|\nabla_x l(x,a)\right\|=\bar{l}(1+\|x\|+\|a\|).
\end{equation}
Thus  
\begin{equation}
\begin{split}
\|\lambda^{(k+1)}&(x)\|
 \leq  \int^{+\infty}_0 e^{-\rho s}\big[\bar{l}(1+\|X^{(k)}(s)\|+\|u^{(k)}(X^{(k)}(s))\|)+\bar{g}\|\lambda^{(k)}(X^{(k)}(s))\|\big]\d s\\
     \leq &\int^{\infty}_0 e^{-\rho s}\bigg[\bar{l}+\bar{l}\|X^{(k)}(s)\|+\bar{l} \bar{a}^{(k)}(1+\|X^{(k)}(s)\|+\bar{g}\bar{\lambda}^{(k)}(1+\|X^{(k)}(s)\|))\bigg]\d s\\
    \leq &\int^{+\infty}_0 e^{-\rho s}(\bar{l}+\bar{l}\bar{a}^{(k)}+\bar{g}\bar{\lambda}^{(k)})(1+\|X^{(k)}(s)\|)\d s\\
    \leq &\int^{+\infty}_0 e^{-\rho s}(\bar{l}+\bar{l} \bar{a}^{(k)}+\bar{g}\bar{\lambda}^{(k)}) e^{\bar{g}(1+\bar{a}^{(k)})s}(1+\|x\|)\d s\\
    =&\frac{\bar{l}+\bar{l} \bar{a}^{(k)}+\bar{g}\bar{\lambda^{(k)}}}{\rho-\bar{g}(1+\bar{a}^{(k)})}(1+\|x\|)\\
    =: & \bar{\lambda}^{(k+1)}(1+\|x\|).
\end{split}
\end{equation}
where $\rho>\bar{g}(1+\bar{a}^{(k)})$. So we only need $m_k=1$, and $M_{k}$ be any real number larger than
$\bar{\lambda}^{(k+1)}$. Thus proves \eqref{equ:barlam}.
At each iteration, $a^{(k)}$ is solved by
\begin{equation}
    \nabla_a l\left(x,a^{(k)}(x)\right)+c^\top \lambda^{(k)}(x)=0\label{equ:solveu}
\end{equation} 
We have
\begin{equation*}
    \bar{c}\|\lambda ^{(k+1)}(x)\|\geq \left\|\nabla_a l(x,a^{(k+1)}(x))\right\| \geq \bar{l}_1 \|a^{(k+1)}(x)\|-c_0.
\end{equation*}
And
\begin{equation*}
    \|a^{(k+1)}(x)\|\leq \frac{(\bar{c} \bar{\lambda}^{(k+1)}+c_0)(1+\|x\|)}{\bar{l}_1} =:\bar{a}^{(k+1)}(1+\|x\|),
\end{equation*}  which proves \eqref{equ:bara}. Next, we consider $\left\|D\lambda^{(k+1)}(x)\right\|$ and $\left\|D a^{(k+1)}(x)\right\|$. We begin with bounding $\left\|D_x X^{(k)}(t)\right\|$.
\begin{equation*}
\begin{split}
    \frac{\d (D_x X^{(k)}(t))}{\d t}=&\bigg[D_x g(X^{(k)}(t),a^{(k)}(X^{(k)}(t))) \\
    &\qquad+D_a g(X^{(k)}(t),a^{(k)}(X^{(k)}(t)))Da^{(k)}(X^{(k)}(t))\bigg]D_x X^{(k)}(t).
    \end{split}
\end{equation*}
And there holds 
\begin{equation}\begin{split}\label{equ:gron_2}
\left\|D_x X^{(k)}(t)\right\|=&\left\|D_x X^{(k)}(0)+\int_0^t\frac{\d (D_x X^{(k)}(s))}{\d s}\d s\right\|\\
\leq &\|D_x X^{(k)}(0)\|+\int^t_0\left\|\frac{\d (D_x X^{(k)}(s))}{\d s}\right\|\d s\\
\leq &1+\int^t_0 (\bar{g}+\bar{c}\bar{a}'^{(k)})\|D_x X^{(k)}(s)\|\d s
\end{split}
\end{equation}
By the Grönwall's inequality, 
\begin{equation}
\begin{split}
\eqref{equ:gron_2} \leq e^{(\bar{g}+\bar{c}\bar{a}'^{(k)})t }.
\end{split}
\end{equation}
Take the derivative of \eqref{equ:lam_tra}
\begin{equation}
\begin{split}
\label{equ:Dlam_tra}
D\lambda^{(k+1)}(x)&
  =\lim_{t\to \infty}e^{-\rho t}D\lambda^{(k+1)}(X^{(k)}(t)) D_x X^{(k)}(t)\\
 & +\int^{+\infty}_0 e^{-\rho s}\bigg[\nabla^2_{xx} l\left(X^{(k)}(s),a^{(k)}\left(X^{(k)}(s)\right)\right)D_x X^{(k)}(s)+\\ & \nabla^2_{xa} l\left(X^{(k)}(s),a^{(k)}\left(X^{(k)}(s)\right)\right)Da^{(k)}\left(X^{(k)}(s)\right) D_x X^{(k)}(s)\\
 &+\sum^d_{i=1}\lambda_i(X^{(k)}(s))D_x\left(\partial_{x_i}g\left(X^{(k)}(s),a^{(k)}(X^{(k)}(s))\right)\right)D_x X^{(k)}(s)\\ & +D_x g\left(X^{(k)}(s),a^{(k)}\left(X^{(k)}(s)\right)\right)D\lambda^{(k)}\left(X^{(k)}(s)\right)D_x X^{(k)}(s)\bigg]\d s
 \end{split} \end{equation}Likewise, we use method of determined coefficients here. Suppose $D\lambda^{(k+1)}(X^{(k)}(t))$ satisfies $$\left\|D\lambda^{(k+1)}(X^{(k)}(t))D_x X^{(k)}(t)\right\|\leq   N_k(1+\|X^{(k)}(t)\|^{n_k})$$ 
when $t\to \infty$, and $N_{k}\geq 0$ and $n_k\geq 0$ are constants to be determined. Take norm on both sides of \eqref{equ:Dlam_tra} gives
  \begin{equation}
\begin{split}
 \left\|D\lambda^{(k+1)}(x)\right\| \leq &\lim_{t\to \infty}e^{-\rho t}\left\|D\lambda^{(k+1)}(X^{(k)}(t))D_x X^{(k)}(t)\right\|\\&+\bigg\|\int^{+\infty}_0 e^{-\rho s} \bigg[\nabla^2_{xx} l\left(X^{(k)}(s), a^{(k)}\left(X^{(k)}(s)\right)\right) D_x X^{(k)}(s)\\&+\nabla^2_{xa} l\left(X^{(k)}(s),a^{(k)}\left(X^{(k)}(s)\right)\right)  Da^{(k)}\left(X^{(k)}(s)\right) D_x X^{(k)}(s)\\&+\sum^d_{i=1}\lambda_i\left(X^{(k)}(s)\right)D_x\left(\partial_{x_i}g\left(X^{(k)}(s),a^{(k)}(X^{(k)}(s))\right)\right)D_x X^{(k)}(s)\\& +D_x g\left(x^{(k)}(s),a^{(k)}\left(X^{(k)}(s)\right)\right)D\lambda^{(k)}(X^{(k)}(s))D_x X^{(k)}(s)\bigg]\d s\bigg\|.
\end{split}
\end{equation}
Select $\rho$ such that $\rho> n_k(\bar{g}(1+\bar{a}^{(k)})+\bar{c}\bar{a}'^{(k)})$, then we have
\begin{equation*}\lim_{t\to \infty}e^{-\rho t}\Big\|D\lambda^{(k+1)}(X^{(k)}(t))D_x X^{(k)}(t)\Big\|=0,\end{equation*}
and
\begin{equation}
\begin{split}
  \left\|D\lambda^{(k+1)}(x)\right\| 
  \leq &\int_0^{+\infty} e^{-\rho s}\bigg\{\bar{l}_{2}\|D_x X^{(k)}(s)\|+\frac{\bar{g}_2}{1+\|x\|}\bar{\lambda}^{(k)}(1+\|x\|)\\&+\bar{l}_{2}\left\|Da^{(k)}(X^{(k)}(s))\right\|\left\|D_x X^{(k)}(s)\right\|
  \\&+\bar{g}\left\|D_x X^{(k)}(s)\right\|\left\|D\lambda^{(k)}(X^{(k)}(s))\right\|\left\|D_x X^{(k)}(s)\right\|\bigg\}\d s\\
    \leq & \int^{+\infty}_0 e^{-\rho s} e^{(\bar{g}+\bar{c}\bar{a}'^{(k)})s}\Big[\bar{l}_{2}+\bar{l}_{2}\bar{a}'^{(k)}+\bar{g}_2\bar{\lambda}^{(k)}+\bar{g}\bar{\lambda}'^{(k)}\Big]\d s\\
    \leq & \frac{\bar{l}_{2}+\bar{l}_{2}\bar{a}'^{(k)}+\bar{g}_2\bar{\lambda}^{(k)}+\bar{g}\bar{\lambda}'^{(k)}}{\rho-(\bar{g}+\bar{c}\bar{a}'^{(k)})}\\ =: &\bar{\lambda}'^{(k+1)}
    \end{split}
\end{equation}
where $\rho>\bar{g}+\bar{c}\bar{a}'^{(k)}$. So we only need $n_k=0$ and $\bar{N}_{k}$ be any constant larger than $\bar{\lambda}'^{(k+1)}$. This proves \eqref{equ:barlamprime}.\\
Recall that $a^{(k)}$ is solved by \eqref{equ:solveu}. Due to the strict convexity of $l$, the control has unique solution. Take the derivative w.r.t. $x$, 
\begin{equation*}
   (\nabla_a\nabla_x^\top) l(x,a^{(k)}(x))+(\nabla_a\nabla_a^\top) l(x,a^{(k)}(x))Da^{(k)} (x)+c^\top D\lambda^{(k)}(x)=0.
\end{equation*} 
Consider $ \|(\nabla_a\nabla_x^\top) l (x,a^{(k)}(x))\|\leq \bar{l}_{2}$ and $\|(\nabla_a\nabla_a^\top)l(x,a^{(k)}(x))\|>c_s$, we have 
\begin{equation*}\left\|Da^{(k)}(x)\right\|\leq \frac{\bar{l}_{2}+\bar{c}\left\|D\lambda^{(k)}(x)\right\|}{c_s}\leq\frac{\bar{l}_{2}+\bar{c}\bar{\lambda}'^{(k+1)}}{c_s}=: \bar{a}'^{(k+1)},\end{equation*} which proves \eqref{equ:baraprime}.
\end{proof}

\begin{lemma}
\label{lem:bound}
There exist a constant $\rho_1$ such that the sequence $\{\bar{a}^{(k)}\}$, $\{\bar{a}^{\prime(k)}\}$, $\{\bar{\lambda}^{(k)}\}$, $\{\bar{\lambda}^{\prime(k)}\}$ in Lemma \ref{lem:iter} are uniformly bounded by constants $C_1$, $C_2$, $C_3$, $C_4$ respectively if $\rho>\rho_1$ and the initial satisfies
 \begin{equation*}
 \begin{split}
    \|\lambda^{(0)}(x)\|  &\leq C_3(1+\|x\|),\ \left\|D\lambda^{(0)}(x)\right\| \leq C_4,
    \end{split}
 \end{equation*}where the constants are
\begin{align*}
    C_1&=\sqrt{\frac{\bar{c}\bar{l}(1+\frac{c_0}{\bar{l}_1})}{\bar{g}\bar{l}_1}}+\frac{c_0}{\bar{l}_1};\ \   C_2=\frac{1}{c_s}\left(\bar{l}_{2}+\sqrt{c_s\bar{l}_{2}+\bar{l}_{2}^2+\bar{g}_2\sqrt{\frac{c_s(\bar{l}^2+c_0\bar{l})}{\bar{g}}}}\right);\\
     C_3&=\sqrt{\frac{\bar{l}_1\bar{l}(1+\frac{c_0}{\bar{l}_1})}{\bar{g}\bar{c}}};\ \qquad \ C_4=\frac{1}{\bar{c}}\sqrt{c_s\bar{l}_{2}+\bar{l}_{2}^2+\bar{g}_2\sqrt{\frac{c_s(\bar{l}^2+c_0\bar{l})}{\bar{g}}}}.
\end{align*}
\end{lemma}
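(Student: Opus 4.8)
The plan is to prove uniform boundedness by a forward-invariance (induction) argument on the four coupled sequences, exploiting the fact that the recursions \eqref{equ:barlam}--\eqref{equ:baraprime} split into two tiers. Using the algebraic relations \eqref{equ:bara} and \eqref{equ:baraprime}, I would eliminate $\bar{a}^{(k)}$ and $\bar{a}'^{(k)}$ and rewrite each tier as a scalar recursion. The first tier, for $\bar{\lambda}^{(k)}$, becomes $\bar{\lambda}^{(k+1)} = F(\bar{\lambda}^{(k)})$ with $F(\ell) = (A + B\ell)/(P - Q\ell)$, where $A = \bar{l}(1 + c_0/\bar{l}_1)$, $B = \bar{l}\bar{c}/\bar{l}_1 + \bar{g}$, $Q = \bar{g}\bar{c}/\bar{l}_1$ and $P = \rho - \bar{g}(\bar{l}_1 + c_0)/\bar{l}_1$; the second tier, for $\bar{\lambda}'^{(k)}$, takes the identical M\"obius form $\bar{\lambda}'^{(k+1)} = G(\bar{\lambda}'^{(k)})$ once $\bar{\lambda}^{(k)}$ appearing in \eqref{equ:barlamprime} is replaced by its (to-be-established) bound $C_3$. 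Since every coefficient is positive, both $F$ and $G$ are increasing wherever the denominator stays positive, so it suffices to find a single $\rho$-independent constant $C$ with $F(C) \leq C$ (resp. $G(C) \leq C$) and then close the induction by monotonicity.

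The crux is the choice of this constant. For a map $\ell \mapsto (A + B\ell)/(P - Q\ell)$, the fixed-point equation is the quadratic $Q\ell^2 - (P - B)\ell + A = 0$, whose two roots $\ell_- < \ell_+$ are real once $\rho$ is large enough that the discriminant $(P - B)^2 - 4QA$ is positive. I would then use the key algebraic fact that the product of the roots, $\ell_-\ell_+ = A/Q$, does not depend on $\rho$, whereas their sum $(P-B)/Q$ grows with $\rho$; hence as $\rho \to \infty$ the attracting root $\ell_-$ tends to $0$ and the repelling root $\ell_+$ tends to $\infty$, but their geometric mean $\sqrt{A/Q}$ stays fixed and always lies strictly between them. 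Choosing $C_3 = \sqrt{A/Q}$ therefore guarantees $\ell_- \leq C_3 \leq \ell_+$, which is exactly the condition for the interval $[0, C_3]$ to be forward invariant under $F$ (so $F(C_3) \leq C_3$); a direct substitution shows $\sqrt{A/Q}$ equals the stated $C_3$, and then $C_1 = (\bar{c}C_3 + c_0)/\bar{l}_1$ by \eqref{equ:bara}. Repeating the identical computation for $G$ produces $C_4$ and $C_2 = (\bar{l}_2 + \bar{c}C_4)/c_s$ by \eqref{equ:baraprime}.

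With the constants fixed, I would run the induction hypothesized in \cref{lem:iter}: the base case is supplied by the assumed initial bounds $\bar{\lambda}^{(0)} \leq C_3$ and $\bar{\lambda}'^{(0)} \leq C_4$, and the inductive step uses monotonicity of $F$ and $G$ together with $F(C_3) \leq C_3$ and $G(C_4) \leq C_4$ to propagate $\bar{\lambda}^{(k)} \leq C_3$, $\bar{a}^{(k)} \leq C_1$, $\bar{\lambda}'^{(k)} \leq C_4$, $\bar{a}'^{(k)} \leq C_2$ from step $k$ to step $k+1$. The constant $\rho_1$ is then taken to be the maximum of all thresholds that make this work simultaneously: positivity of both denominators on $[0,C_3]$ and $[0,C_4]$, positivity of both discriminants (so the roots are real), and the condition $\rho > \bar{g}(1 + \bar{a}^{(k)}) + \bar{c}\bar{a}'^{(k)}$ required by \cref{lem:iter}, all of which can be written using $C_1,\dots,C_4$ and are finite precisely because these constants are independent of $\rho$.

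The main obstacle is the second tier. Because \eqref{equ:barlamprime} contains $\bar{\lambda}^{(k)}$ through the coefficient $\bar{g}_2\bar{\lambda}^{(k)}$, the $\bar{\lambda}'$-recursion is not autonomous, and the geometric-mean argument only applies after the first-tier bound $\bar{\lambda}^{(k)} \leq C_3$ has been secured and frozen into the constant term of $G$; establishing the two tiers in the correct order, and checking that freezing $\bar{\lambda}^{(k)}$ at its maximum $C_3$ is legitimate (which it is, by monotonicity of $G$ in that argument), is where the care is needed. A secondary point is recognizing that it is the discriminant conditions, rather than merely positivity of the denominators, that pin down $\rho_1$; this is automatic once one notes that $\ell_- \le C \le \ell_+$ with $C = \sqrt{A/Q}$ is equivalent to $(P-B)^2 \geq 4QA$.
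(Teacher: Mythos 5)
Your proposal is correct and follows essentially the same route as the paper: eliminate $\bar{a}^{(k)}$ and $\bar{a}'^{(k)}$ via \eqref{equ:bara} and \eqref{equ:baraprime} to obtain two fractional-linear (M\"obius) scalar recursions, show each is increasing and maps $\bigl(0,\sqrt{A/D}\bigr)$ into itself once $\rho$ is large enough for the fixed-point quadratic to have positive real roots, and then recover $C_1$ and $C_2$ from $C_3$ and $C_4$. Your framing of $C_3$ as the $\rho$-independent geometric mean of the two fixed points is a slightly cleaner way of seeing why the paper's verification $h\bigl(\sqrt{AD}/D\bigr)<\sqrt{AD}/D$ works, but the constants and the invariance argument are identical.
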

\begin{proof}In the proof below, we'll show the four sequences $\{\bar{\lambda}^{(k+1)}\}$, $\{\bar{\lambda}'^{(k)}\}$, $\{\bar{a}^{(k)}\}$, $\{\bar{a}'^{(k)}\}$ are uniformly bounded respectively.\\
Take \begin{equation*}\begin{split}\rho_1=\bar{g}(1+C_1)+\bar{c}C_2+2\bar{g}+\frac{\bar{g}c_0}{\bar{l}_1}+\frac{\bar{l}\bar{c}}{\bar{l}_1}+2\sqrt{\bar{l}\left(1+\frac{c_0}{\bar{l}_1}\right)\frac{\bar{g}\bar{c}}{\bar{l}_1}}+\frac{2\bar{c}\bar{l}_2}{c_s}\\
+2\sqrt{\left(\bar{l}_{2}+\frac{\bar{l}_{2}^2}{c_s}+\bar{g}_2\sqrt{\frac{\bar{l}^2+c_0 \bar{l}}{\bar{g}c_s}}\right)\frac{\bar{c}^2}{c_s}}.\end{split}
\end{equation*}Let $\rho>\rho_1$.\\
(a) First, we prove that $\{\bar{\lambda}^{(k)}\}$ is bounded. Bring \eqref{equ:bara} to \eqref{equ:barlam},
$$\bar{\lambda}^{(k+1)}=\frac{\bar{l}+\frac{\bar{l}\bar{c}}{\bar{l}_1}\bar{\lambda}^{(k)}+\frac{\bar{l}c_0}{\bar{l}_1}+\bar{g}\bar{\lambda}^{(k)}}{\rho-\bar{g}(1+\frac{\bar{c} \bar{\lambda}^{(k)}+c_0}{\bar{l}_1})}=\frac{\bar{l}+\frac{\bar{l}c_0}{\bar{l}_1}+(\frac{\bar{l}\bar{c}}{\bar{l}_1}+\bar{g})\bar{\lambda}^{(k)}}{\rho-\bar{g}(1+\frac{c_0}{\bar{l}_1})-\frac{\bar{g}\bar{c}}{\bar{l}_1}\bar{\lambda}^{(k)}}$$
Let
\begin{equation*}
    \begin{split}
    &A=\bar{l}+\frac{\bar{l}c_0}{\bar{l}_1},\ \ \ \ \ \ \ \ \ \ \ \ \ \ \ \ \ \  B=\frac{\bar{l}\bar{c}}{\bar{l}_1}+\bar{g}\\
    &C_\rho=\rho-\bar{g}(1+\frac{c_0}{\bar{l}_1})>0,\ \ D=\frac{\bar{g}\bar{c}}{\bar{l}_1}.\ \
\end{split}
\end{equation*}
Then
$$\bar{\lambda}^{(k+1)}=\frac{A+B\bar{\lambda}^{(k)}}{C_\rho-D\bar{\lambda}^{(k)}}=-\frac{B}{D}+\frac{A+\frac{C_\rho B}{D}}{C_\rho-D\bar{\lambda}^{(k)}}\coloneqq h(\bar{\lambda}^{(k)}).$$ 
To guarantee this iteration form has positive fix point $\bar{\lambda}>0$, solution for $\bar{\lambda}=h(\bar{\lambda})$ should be positive. Since $\rho$ satisfies
$$C_\rho>2\sqrt{AD}+B,$$
then the solutions 
\begin{equation*}
\begin{split}
    \bar{\lambda}_{1,2}&=\frac{C_\rho-B\pm \sqrt{C_\rho^2-2C_\rho B+B^2-4AD}}{2D}
    \end{split}
\end{equation*} 
are positive. Next we prove that sequence $\{\bar{\lambda}^{ (k)}\}$ is bounded in the region $\left(0,\frac{\sqrt{AD}}{D}\right)$. Notice that $0<h'(\bar{\lambda})<1$ on the region. Since $h'(\bar{\lambda})>0$, the function is monotonically increasing on the region. Thus
$$h(0)<\bar{\lambda}^{(k+1)}<h\Big(\frac{\sqrt{AD}}{D}\Big).$$
The lower bound satisfies
\begin{equation*}
    h(0)=-\frac{B}{D}+\frac{A+\frac{C_\rho B}{D}}{C_\rho}=\frac{A}{C_\rho}>0.
\end{equation*}The upper bound satisfies
$$
\begin{aligned}
h\Big(\frac{\sqrt{AD}}{D}\Big)&=-\frac{B}{D}+\frac{A+\frac{BC_\rho}{D}}{C_\rho-\sqrt{AD}}=\frac{B\sqrt{AD}+AD}{D(C_\rho-\sqrt{AD})}\\
&<\frac{\sqrt{AD}(B+\sqrt{AD})}{D(B+\sqrt{AD})}=\frac{\sqrt{AD}}{D}\end{aligned}$$
As a result, $\bar{\lambda}^{(k+1)}\in (0,\frac{\sqrt{AD}}{D})$. The sequence $\{\bar{\lambda}^{(k)}\}$ is bounded by \begin{equation*}
\begin{split}
C_3&\coloneqq\frac{\sqrt{AD}}{D}=\sqrt{\frac{\bar{l}_1\bar{l}(1+\frac{c_0}{\bar{l}_1})}{\bar{g}\bar{c}}}\end{split}\end{equation*} for all $k\in \mathbb{Z}^{+}$.\\
(b) Then, we prove that $\{\bar{\lambda}'^{(k)}\}$ is bounded. Bring \eqref{equ:baraprime} to \eqref{equ:barlamprime},
\begin{align*} \bar{\lambda}'^{(k+1)}&=\frac{\bar{l}_{2}+\bar{l}_{2}\bar{a}'^{(k)}+\bar{g}_2 \bar{\lambda}^{(k)}+\bar{g}\bar{\lambda}'^{(k)}}{\rho-(\bar{g}+\bar{c}\bar{a}'^{(k)})}\\&=\frac{\left(\bar{l}_{2}+\bar{l}_{2}\frac{\bar{l}_{2}}{c_s}+\bar{g}_2\sqrt{\frac{\bar{l}^2+c_0 \bar{l}}{\bar{g}c_s}}\right)+\left(\bar{l}_{2}\frac{\bar{c}}{c_s}+\bar{g}\right)\bar{\lambda}'^{(k)}}{\left(\rho-\bar{g}-\bar{c}\frac{\bar{l}_{2}}{c_s}\right)-\frac{\bar{c}^2}{c_s}\bar{\lambda}'^{(k)}}.\end{align*}
Let
\begin{align*}
    A'&=\bar{l}_{2}+\frac{\bar{l}_{2}^2}{c_s}+\bar{g}_2\sqrt{\frac{\bar{l}^2+c_0 \bar{l}}{\bar{g}c_s}},\ &B'=\frac{\bar{l}_{2}\bar{c}}{c_s}+\bar{g};\\
    C'_\rho&=\rho-\bar{g}-\frac{\bar{l}_{2}\bar{c}}{c_s}, &D'=\frac{\bar{c}^2}{c_s}.\qquad 
\end{align*}Thus
$$\bar{\lambda}^{\prime(k+1)}=\frac{A^\prime+B^\prime\bar{\lambda}^{^\prime(k)}}{C_\rho^\prime-D^\prime\bar{\lambda}^{^\prime(k)}}=-\frac{B^\prime}{D^\prime}+\frac{A^\prime+\frac{C_\rho^\prime B^\prime}{D^\prime}}{C_\rho^\prime-D^\prime\bar{\lambda}^{^\prime(k)}}.$$ It can be show that $\bar{\lambda}^{\prime (k+1)}\in \left(0,\frac{\sqrt{A^\prime D^\prime}}{D^\prime}\right)$ using the same method as in (a). It is easy to obtain that the sequence $\{\bar{\lambda}^{(k)}\}$ is bounded by 
\begin{equation*}
\begin{split}
C_4&\coloneqq\frac{1}{\bar{c}}\sqrt{c_s\bar{l}_{2}+\bar{l}_{2}^2+\bar{g}_2\sqrt{\frac{c_s\left(\bar{l}^2+c_0\bar{l}\right)}{\bar{g}}}}.
\end{split}
\end{equation*}
for all $k\in \mathbb{Z}^{+}$.\\
(c) By \eqref{equ:bara}, we have 
\begin{equation*}
    \bar{a}^{(k)}=\frac{\bar{c} \bar{\lambda}^{(k)}+c_0}{\bar{l}_1}\leq \frac{\bar{c} C_3+c_0}{\bar{l}_1},
\end{equation*} and 
the sequence $\{\bar{a}^{(k)}\}$ is bounded by 
\begin{equation*}
\begin{split}
C_1&\coloneqq\frac{\bar{c} C_3+c_0}{\bar{l}_1}=\sqrt{\frac{\bar{c}\bar{l}\left(1+\frac{c_0}{\bar{l}_1}\right)}{\bar{g}\bar{l}_1}}+\frac{c_0}{\bar{l}_1}\end{split}\end{equation*}for all $k\in \mathbb{Z}^{+}$.\\
(d) By \eqref{equ:baraprime}, we have 
\begin{equation*}
\begin{split}
        \bar{a}'^{(k)}&=\frac{\bar{l}_{2}+\bar{c}\bar{\lambda}^{\prime(k)}}{c_s}\leq \frac{\bar{l}_{2}+\bar{c}C_4}{c_s}, \end{split}
\end{equation*} and 
the sequence $\{\bar{a}^{\prime(k)}\}$ is bounded by 
\begin{equation*}
\begin{split}
C_2&\coloneqq\frac{\bar{l}_{2}+\bar{c}C_4}{c_s}\\
&=\frac{1}{c_s}\left(\bar{l}_{2}+\sqrt{c_s\bar{l}_{2}+\bar{l}_{2}^2+\bar{g}_2\sqrt{\frac{c_s(\bar{l}^2+c_0\bar{l})}{\bar{g}}}}\right)\end{split}\end{equation*}
for all $k\in \mathbb{Z}^{+}$.
\end{proof}

Next we state our main theorem that shows the convergence of {\bf PI-lambda} algorithm.
\begin{theorem}\label{thm: main}
Under Assumption \ref{ass:A1}, for any $\alpha>1$, there exists a large enough $\rho_2$, such that if $\rho>\rho_2$, define
\begin{equation}\label{equ:sense}
      e^{(k)}:=\int_{\mathbb{R}^d}  \frac{\|\lambda^{(k)}(x)-\lambda^{(k-1)}(x) \|^2}{(1+\|x\|^2)^{2\alpha}}\dx.
\end{equation}we have $e^{(k+1)}\leq\eta e^{(k)}$ with $\eta\in(0,1)$. Therefore $\{\lambda^{(k)}\}$ forms a Cauchy-sequence in $L_\alpha^2$-sense.
\end{theorem}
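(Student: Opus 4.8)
The plan is to exploit the characteristic representation \eqref{equ:lam_tra} established in the proof of \cref{lem:iter}, writing both $\lambda^{(k+1)}$ and $\lambda^{(k)}$ as discounted time-integrals along their respective characteristics, subtracting them, and reducing the theorem to a single pointwise estimate of the form
\begin{equation*}
\|\lambda^{(k+1)}(x)-\lambda^{(k)}(x)\| \;\le\; \tilde{K}\int_0^{+\infty} e^{-\rho s}\,\big\|\delta\lambda\big(X^{(k-1)}(s;x)\big)\big\|\,\d s,
\end{equation*}
where $\delta\lambda:=\lambda^{(k)}-\lambda^{(k-1)}$ and $X^{(k-1)}(\cdot\,;x)$ is the characteristic of the $(k-1)$-th policy issuing from $x$. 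Throughout, the constants will be kept independent of $k$ by invoking the uniform bounds $C_1,\dots,C_4$ of \cref{lem:bound}, so that $\|\lambda^{(j)}(x)\|\le C_3(1+\|x\|)$, $\|D\lambda^{(j)}\|\le C_4$, $\|a^{(j)}(x)\|\le C_1(1+\|x\|)$ and $\|Da^{(j)}\|\le C_2$ hold for all $j$.

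First I would establish the pointwise bound above. Subtracting the two representations, the difference of the integrands of \eqref{equ:lam_tra} (built from the $k$-th and the $(k-1)$-th data, respectively) decomposes into three effects: the trajectory mismatch $\|X^{(k)}(s)-X^{(k-1)}(s)\|$, the control mismatch $\|a^{(k)}-a^{(k-1)}\|$, and the value-gradient mismatch $\|\delta\lambda\|$. Using the Lipschitz and boundedness estimates of \cref{ass:A1} (the bounds $\bar g$, $\bar g_2$, $\bar l_2$ on the first and second derivatives of $g$ and $l$) together with the uniform bounds of \cref{lem:bound}, each effect is controlled by a $k$-independent constant. The control mismatch is reduced to the value-gradient mismatch by subtracting the optimality conditions \eqref{equ:solveu} for the two iterates and using strong convexity $\|(\nabla_a\nabla_a^\top)l\|\ge c_s$, which yields $\|a^{(k)}(y)-a^{(k-1)}(y)\|\le (\bar c/c_s)\|\delta\lambda(y)\|$. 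The trajectory mismatch is bounded by a Grönwall estimate on $\tfrac{\d}{\d s}\|X^{(k)}-X^{(k-1)}\|$, whose forcing term is again proportional to $\|\delta\lambda(X^{(k-1)}(s))\|$; this produces a time-convolution of $\|\delta\lambda\|$ with an exponential kernel. Substituting back and interchanging the order of integration in the resulting double integral collapses everything into the single discounted integral of $\|\delta\lambda\|$ along $X^{(k-1)}$, with $\tilde K=K+K'/(\rho-\beta)$ and $\beta=\bar g+\bar c C_2$.

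Next I would square this bound, apply Cauchy--Schwarz against the finite measure $e^{-\rho s}\,\d s$ (of total mass $1/\rho$), multiply by the weight $(1+\|x\|^2)^{-2\alpha}$ and integrate over $\mathbb{R}^d$, then interchange the $x$- and $s$-integrals. For each fixed $s$ the inner integral is transformed by the change of variables $y=X^{(k-1)}(s;x)$. Two distortions must be tracked: the weight ratio, controlled through the growth estimate $1+\|X^{(k-1)}(s)\|\le e^{c's}(1+\|x\|)$ with $c'=\bar g(1+C_1)$ (already derived via Grönwall in \cref{lem:iter}), giving $(1+\|x\|^2)^{-2\alpha}\le 2^{2\alpha}e^{4\alpha c' s}(1+\|y\|^2)^{-2\alpha}$; and the Jacobian of the flow, controlled by Liouville's formula and the divergence bound $d(\bar g+\bar c C_2)=d\beta$, giving $|\det D_x X^{(k-1)}(s)|^{-1}\le e^{d\beta s}$. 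Hence the inner integral is at most $2^{2\alpha}e^{\mu s}\,e^{(k)}$ with $\mu=4\alpha c'+d\beta$, and the remaining $s$-integral $\int_0^{+\infty} e^{-\rho s}e^{\mu s}\,\d s=(\rho-\mu)^{-1}$ converges once $\rho>\mu$. This yields $e^{(k+1)}\le \eta\,e^{(k)}$ with $\eta=\tilde K^2 2^{2\alpha}/\big(\rho(\rho-\mu)\big)$; since $\tilde K\to K$ and $\mu$ is fixed while the denominator grows like $\rho^2$, one may choose $\rho_2$ so large that $\eta\in(0,1)$ for every $\rho>\rho_2$, and the geometric decay of $e^{(k)}$ gives the Cauchy property in $L^2_\alpha$. (Here $\alpha$ must also be large enough, relative to $d$, that the linear growth of $\delta\lambda$ against the weight keeps each $e^{(k)}$ finite.)

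The principal obstacle is the coupled, nested Grönwall structure: the value-gradient error drives the control error, which drives the trajectory error, which re-enters the integrand error, so one must disentangle this feedback and verify that after interchanging integrals it reduces to a clean discounted integral of $\|\delta\lambda\|$ alone --- all with constants that do not depend on $k$, which is precisely why the uniform bounds of \cref{lem:bound} are indispensable. A second, more delicate point is the change-of-variables step, where the expanding and non-volume-preserving characteristic flow distorts both the weight $(1+\|x\|^2)^{-2\alpha}$ and the Lebesgue measure; quantifying these two distortions is what forces $\rho$ to dominate the composite growth rate $\mu$ and explains the role of the exponent $\alpha$ in the weight.
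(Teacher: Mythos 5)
Your proposal is essentially correct, but it follows a genuinely different route from the paper. The paper argues in Eulerian fashion: it subtracts the two PDEs \eqref{eq:iterlambda2} for $\lambda^{(k+1)}$ and $\lambda^{(k)}$ at the \emph{same} point $x$, pairs the result with $\lambda^{(k+1)}-\lambda^{(k)}$ in the weighted $L^2_\alpha$ inner product, and integrates by parts on the advection term (this is where the factor $5\alpha\bar g(1+C_1)$ enters, from differentiating the weight $(1+\|x\|^2)^{-2\alpha}$); the remaining terms are handled by the same control-mismatch bound $\|a^{(k)}-a^{(k-1)}\|\le(\bar c/c_s)\|\lambda^{(k)}-\lambda^{(k-1)}\|$ that you use, plus Young's inequality, and --- crucially --- the term $(D_xg(x,a^{(k)})-D_xg(x,a^{(k-1)}))\lambda^{(k)}$ vanishes identically because $g$ is affine in $a$. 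You instead work along characteristics: you subtract the two representations \eqref{equ:lam_tra}, which live on \emph{different} trajectories, control the trajectory mismatch by a Gr\"onwall argument driven by $\|\delta\lambda\|$, and transport everything back to $e^{(k)}$ via a change of variables with Liouville's formula. This is a legitimate and self-contained alternative (and your parenthetical point that finiteness of $e^{(k)}$ really requires $\alpha>(d+2)/4$ rather than merely $\alpha>1$ is a fair criticism that applies to the paper's version too), but it buys you extra bookkeeping the paper avoids: (i) the $D_xg$ term no longer cancels, since it is evaluated at $X^{(k)}(s)$ versus $X^{(k-1)}(s)$, and bounding $(D_xg(X^{(k)})-D_xg(X^{(k-1)}))\lambda^{(k)}(X^{(k)})$ \emph{linearly} in the trajectory mismatch needs a short case analysis, because the natural pairing of the decay $\bar g_2/(1+\|\xi\|)$ with the growth $C_3(1+\|X^{(k)}\|)$ involves two different points and is not uniformly bounded without splitting on whether $\|X^{(k)}-X^{(k-1)}\|$ is small or large relative to $1+\|X^{(k-1)}\|$; (ii) your Jacobian factor $e^{d\beta s}$ makes $\rho_2$ grow explicitly with the dimension $d$, whereas the paper's threshold $\rho_2=\max\{\rho_1,\,2\bar g+\bar cC_2+5\alpha\bar g(1+C_1)+\bar c^2C_4/c_s+\bar l_2\}$ depends on $d$ only through the constants $C_i$; and (iii) you should note explicitly that the flow $x\mapsto X^{(k-1)}(s;x)$ is a global diffeomorphism of $\mathbb{R}^d$ (true, by the uniform Lipschitz bound $\bar g+\bar cC_2$ on the closed-loop vector field), so the change of variables is justified. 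Both routes lean equally on \cref{lem:bound} to keep all constants independent of $k$, and both yield a contraction factor that tends to $0$ as $\rho\to\infty$.
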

\textbf{Remark}:
Note that $\rho_2\geq\rho_1$, which suggests that if $\rho$ satisfies the inequality condition in \cref{thm: main}, then it satisfies the inequality condition in \cref{lem:bound}.
\begin{proof} 
Recall in equation \eqref{eq:iterlambda2}, $\lambda^{(k)}(x)$ and $\lambda^{(k+1)}(x)$ are defined by
\begin{equation}
\label{equ: grad_pde}
\begin{split}
     \rho \lambda^{(k)}(x)=&D\lambda^{(k)}(x)g\left(x,a^{(k-1)}(x)\right)\\
     &+D_x g\left(x,a^{(k-1)}(x)\right)\lambda^{(k-1)}(x)+\nabla_x l\left(x,a^{(k-1)}(x)\right).
     \end{split}
\end{equation}
and
\begin{equation*}
\begin{split}
    \rho \lambda^{(k+1)}(x)=&D\lambda^{(k+1)}(x)g(x, a^{(k)}(x))\\
    &+D_x g\left(x,a^{(k)}(x)\right)\lambda^{(k)}(x)+\nabla_x l\left(x,a^{(k)}(x)\right).\\
\end{split}
\end{equation*} 
Then the difference is $\lambda^{(k+1)}-\lambda^{(k)}(x)$ is  
\begin{equation*}
\begin{split}
     &\rho(\lambda^{(k+1)}-\lambda^{(k)}(x))\\
     =&D\lambda^{(k+1)}(x)g(x,a^{(k)}(x))-D  \lambda^{(k)}(x)g(x,a^{(k-1)}(x))+D_x g(x,a^{(k)}(x))\lambda ^{(k)}(x)\\
     &-D_x g(x,a^{(k-1)}(x))\lambda^{(k-1)} (x)+\nabla_x l(x,a^{(k)}(x))-\nabla_x l(x,a^{(k-1)}(x))\\
     =&D\left(\lambda^{(k+1)}(x)-\lambda^{(k)}(x)\right)g(x,a^{(k)}(x))
     \\&+D\lambda^{(k+1)}(x)\left(g(x,a^{(k)}(x))-g(x,a^{(k-1)}(x))\right)
     \\
     &+\bigg(D_x g(x,a^{(k)}(x))-D_x g(x,a^{(k-1)}(x))
     \bigg)\lambda^{(k)}(x)+\nabla_x l(x,a^{(k)}(x))\\
     &-\nabla_x l(x,a^{(k-1)}(x))+D_x g(x,a^{(k-1)}(x))\left(\lambda^{(k)}(x)-\lambda^{(k-1)}(x)\right).
     \end{split}
\end{equation*}
We consider the error in  the following $L_\alpha^2$ sense with $\alpha>1$. 
Taking the inner product of  $\lambda^{(k+1)}-\lambda^{(k)}(x)$ with the previous expression, we have
\begin{equation*}
\begin{split}
 \rho e^{(k+1)} &\coloneqq     \rho \int_{\mathbb{R}^d}\frac{\left\|\lambda^{(k+1)}(x)-\lambda^{(k)}(x)\right\| ^2}{(1+\|x\|^2)^{2\alpha}} \d x\\
     \leq &
     \frac{1}{2}\bigg\|\int_{\mathbb{R}^d}D(\|\lambda^{(k+1)}(x)-\lambda^{(k)}(x)\|^2)\frac{g(x,a^{(k)}(x))}{(1+\|x\|^2)^{2\alpha}}\d x \bigg\|
    \\
    +&\bigg\|\int_{\mathbb{R}^d} D\lambda^{(k+1)}(x) \Big(g(x,a^{(k)})(x)-g(x,a^{(k-1)}(x))\Big)\frac{\lambda^{(k+1)}(x)-\lambda^{(k)}(x)}{(1+\|x\|^2)^{2\alpha}} \d x\bigg\|
    \\
    +&\bigg\|\int_{\mathbb{R}^d}\bigg(D_x g(x,a^{(k)}(x))-D_x g(x,a^{(k-1)}(x))\bigg)\lambda^{(k)}(x)\frac{\lambda^{(k+1)}(x)-\lambda^{(k)}(x)}{(1+\|x\|^2)^{2\alpha}}\d x
 \bigg\|   \\
    +&\bigg\|\int_{\mathbb{R}^d} \bigg(\nabla_x l(x,a^{(k)}(x))-\nabla_x l(x,a^{(k-1)}(x))\bigg)\frac{\lambda^{(k+1)}(x)-\lambda^{(k)}(x)}{(1+\|x\|^2)^{2\alpha}}\d x\bigg\|
     \\
    +&\bigg\|\int_{\mathbb{R}^d}D_x g(x,a^{(k-1)}(x))\frac{(\lambda^{(k)}(x)-\lambda^{(k-1)}(x))\left(\lambda^{(k+1)}(x)-\lambda^{(k)}(x)\right)}{(1+\|x\|^2)^{2\alpha}}  \d x\bigg\|\\
    :=&I_1+I_2+I_3+I_4+I_5.\\
    \end{split}
    \end{equation*}
   By Lemma \ref{lem:iter} and Assumption \ref{ass:A1},we have
 \begin{equation*}
   \|g(x,a^{(k)})\|  
    \leq \bar{g}(1+C_1) (1+\|x\|).
\end{equation*}
   Integration by part for the first term $I_1$  gives
   \begin{equation}
   \begin{split}
 I_1\leq & \frac{1}{2}\int_{\mathbb{R}^d}\frac{\|\lambda^{(k+1)}(x)-\lambda^{(k)}(x)\|^2}{(1+\|x\|^2)^{2\alpha}}
    \bigg[
    \left\|D_x g(x,a^{(k)}(x)) \right\|
    \\&+
    \left\|D_a g(x,a^{(k)}(x)\right\|
    \left\| D a^{(k)}(x)\right\| + 
    \frac{4\alpha \|x\|}{1+\|x\|^2}  \bar{g}(1+C_1)(1+\|x\|)  \bigg] \d x  \\
    \leq & \frac{1}{2}\int_{\mathbb{R}^d}\frac{\|\lambda^{(k+1)}(x)-\lambda^{(k)}(x)\|^2}{(1+\|x\|^2)^{2\alpha}}
    \bigg[
    \bigg\|D_x g (x,a^{(k)}(x))\bigg\|\\
    &+
   \bar{c}
    \left\| D a^{(k)}(x)\right\| + 
     5\alpha\bar{g}(1+C_1)\bigg] \d x 
     \\
     \leq & e^{(k+1)} \left(\bar{g}+\bar{c}C_2+5\alpha\bar{g}(1+C_1)\right),
     \end{split}\label{equ: I1}
\end{equation}using $ \frac{\|x\|(1+\|x\|)}{1+\|x\|^2}< \frac{5}{4}$ for $\forall x \in \mathbb{R}$ in the last second equation.\\ By the mean value theorem for $g(x,\cdot)$ and Lemma \ref{lem:iter}, the second term $I_2$ is
  \begin{equation*}
  \begin{split}
      I_2
      =&\int_{\mathbb{R}^d}\bigg\|D\lambda^{(k+1)}(x)
      D_a g\Big(x,a^{(k-1)}(x)+\delta_1 (x)(a^{(k)}(x)-a^{(k-1)}(x))\Big)\\& (a^{(k)}(x)-a^{(k-1)}(x))\frac{\lambda^{(k+1)}(x)-\lambda^{(k)}(x)}{(1+\|x\|^2)^{2\alpha}}\bigg\|\d x\\
    \leq &\bar{c}C_4 \int_{\mathbb{R}^d} \frac{\|\lambda^{(k+1)}(x)-\lambda^{(k)}(x)\|\|a^{(k)}(x)-a^{(k-1)}(x)\|} {(1+\|x\|^2)^{2\alpha}}\d x,
    \end{split}
\end{equation*}
where a function $\delta_1(x)$ is $\mathbb{R}^d\to \mathbb{R}$.\\ The third term $I_3=0$ because $D_x g(x,a)$ is independent of $a$.
The fourth term $I_4$ is 
   \begin{equation*}
        I_4 
    \leq  \bar{l}_{2}  \int_{\mathbb{R}^d} \frac{\|\lambda^{(k+1)}(x)-\lambda^{(k)}(x)\|\|a^{(k)}(x)-a^{(k-1)}(x)\|}{(1+\|x\|^2)^{2\alpha}}\d x.
 \end{equation*}
The last term $I_5$ is
\begin{equation}
\begin{split}\label{equ: I5}
I_5 \coloneqq  & \frac12
  \int_{\mathbb{R}^d}\frac{\left\|\lambda^{(k+1)}(x)-\lambda^{(k)}(x)\right\|^2}{(1+\|x\|^2)^{2\alpha}}
  \left\|D_x g(x,a^{(k-1)}(x))\right\| \d x\\&
+\frac12
  \int_{\mathbb{R}^d}\frac{ \left\|\lambda^{(k)}(x)-\lambda^{(k-1)}(x)\right\|^2}{(1+\|x\|^2)^{2\alpha}}
  \left\|D_x g(x,a^{(k-1)}(x))\right\| \d x
  \\
  \leq & \frac{\bar{g}}{2} \left(e^{(k+1)} +e^{(k)}\right)
  \end{split}
    \end{equation}
Next we estimate the bound of $\left\|a^{(k)}(x)-a^{(k-1)}(x)\right\|$ by $\left\|\lambda^{(k)}(x)-\lambda^{(k-1)}(x)\right\|$. By the first order necessary condition, we have 
\begin{equation*}
    \begin{split}
    0=&\nabla_a l(x,a^{(k)})+c^\top\lambda^{(k)}(x);\\
    0=&\nabla_a l(x,a^{(k-1)})+c^\top\lambda^{(k-1)}(x).
    \end{split}
\end{equation*}
Then, by the mean value theorem, there exist $\gamma^{(k+1)}_1(x):\mathbb{R}^d\to\mathbb{R}$ such that
\begin{equation*}
\begin{split}
      (\nabla_a\nabla_a^\top) l\left(x,a^{(k-1)}(x)+\gamma_1^{(k)}(x)(a^{(k)}-a^{(k-1)}(x))\right)&(a^{(k)}-a^{(k-1)}(x))
      \\&+  c^\top(\lambda^{(k)}(x)-\lambda^{(k-1)}(x))=0.
      \end{split}
\end{equation*}
Thus
\begin{equation*}
\begin{split}
    a^{(k)}(x)-a^{(k-1)}(x)=&-\bigg((\nabla_a\nabla^\top_a) l\left(x,a^{(k-1)}(x)+\gamma_1^{(k)}(x)(a^{(k)}-a^{(k-1)}(x))\right)\bigg)^{-1}\\&\cdot \bigg(c^\top (\lambda^{(k)}(x)-\lambda^{(k-1)}(x))\bigg).
    \end{split}
\end{equation*}
Since $\|(\nabla_a\nabla_a^\top) l(\cdot,\cdot)\|>c_s$, we have
$$\left\|a^{(k)}(x)-a^{(k-1)}(x)\right\|\leq \frac{\bar{c}}{c_s}\left\|\lambda^{(k)}(x)-\lambda^{(k-1)}(x)\right\|$$
and then
\begin{equation}
    \begin{split}\label{equ: I2}
    I_2\leq & \frac{\bar{c}^2C_4}{c_s}\int_{\mathbb{R}^d} \frac{\|\lambda^{(k+1)}(x)-\lambda^{(k)}(x)\|\|\lambda^{(k)}(x)-\lambda^{(k-1)}(x)\|}{(1+\|x\|^2)^{2\alpha}}\d x
    \leq  \frac{\bar{c}^2C_4}{2c_s} (e^{(k+1)}+e^{(k)})
\end{split}\end{equation}
and
\begin{equation}\label{equ: I4}
    \begin{split}
    I_4\leq & \bar{l}_{2}\int_{\mathbb{R}^d} \frac{\|\lambda^{(k+1)}(x)-\lambda^{(k)}(x)\|\|\lambda^{(k)}(x)-\lambda^{(k-1)}(x)\|}{(1+\|x\|^2)^{2\alpha}}\d x\leq  \frac{\bar{l}_{2}}{2} (e^{(k+1)}+e^{(k)})
\end{split}
\end{equation}
Combining \eqref{equ: I1}, \eqref{equ: I5}, \eqref{equ: I2}, \eqref{equ: I4}, we have
\begin{equation*}\begin{split}
\rho e^{(k+1)}\leq &e^{(k+1)} \left(\bar{g}+\bar{c}C_2+5\alpha\bar{g}(1+C_1)\right)+\\&\frac{\bar{c}^2C_4}{2c_s} (e^{(k+1)}+e^{(k)})+\frac{\bar{l}_{2}}{2} (e^{(k+1)}+e^{(k)})+\frac{\bar{g}}{2}(e^{(k+1)}+e^{(k)}).\end{split}\end{equation*}
Consequently, \begin{equation}\begin{split}
    e^{(k+1)}&\leq \frac{\frac{\bar{c}^2C_4}{2c_s}+\frac{\bar l_{2}}{2}+\frac{\bar{g}}{2}}{\rho-\left(\bar{g}+\bar{c}C_2+5\alpha\bar{g}(1+C_1)\right)-\frac{\bar{c}^2C_4}{2c_s}-\frac{\bar {l}_{2}}{2}-\frac{\bar{g}}{2}}e^{(k)}\coloneqq\eta e^{(k)}.\end{split}\end{equation}
Select $\rho_2$ to be 
\begin{equation}
\rho_2=\max\left\{\rho_1,2\bar{g}+\bar{c}C_2+5\alpha\bar{g}(1+C_1)+\frac{\bar{c}^2C_4}{c_s}+\bar{l}_{2}\right\}.
\end{equation}
then for $\rho>\rho_2$, we have $e^{(k+1)}\leq \eta e^{(k)}$ where $\eta\in (0,1)$. $e^{(k+1)}$ will converge to 0 as $k\to \infty$.
That is
\begin{equation}\lim_{k\to \infty}\int_{\mathbb{R}^d}\frac{\left \|\lambda^{(k+1)}(x)-\lambda^{(k)}(x)\right\| ^2}{(1+\|x\|^2)^{2\alpha}} \d x=0.\end{equation}
\end{proof}
Finally, we show that the sequence $\{\lambda^{(k)}\}$ does converge to the classical solution by the corollary below, the proof  is shown in the supplementary material.
\begin{corollary}\label{cor:converge}
If there exists a classical solution of PDE \eqref{eq:lam-pde}, then $\lambda^{(k)}$ converges to the solution in $L_{\alpha}^2$ sense.
\end{corollary}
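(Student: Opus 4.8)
The plan is to show directly that the iterates contract toward the given classical solution, rather than merely toward one another. Write $\lambda$ for the classical solution of \eqref{eq:lam-pde} and $\hat{a}$ for its associated policy from \eqref{eq:opt-v-grad}. Since $\lambda$ is a fixed point of the PI-lambda map, the a priori estimates of \cref{lem:iter} and \cref{lem:bound} apply to the pair $(\lambda,\hat{a})$ as well, so that $\|\lambda(x)\|\leq C_3(1+\|x\|)$, $\|D\lambda(x)\|\leq C_4$, $\|\hat{a}(x)\|\leq C_1(1+\|x\|)$ and $\|D\hat{a}(x)\|\leq C_2$ with the very same constants. First I would introduce the error to the true solution
\begin{equation*}
d^{(k)}:=\int_{\mathbb{R}^d}\frac{\|\lambda^{(k)}(x)-\lambda(x)\|^2}{(1+\|x\|^2)^{2\alpha}}\dx,
\end{equation*}
and subtract the fixed-point equation satisfied by $(\lambda,\hat{a})$ from equation \eqref{equ: grad_pde} written for $\lambda^{(k+1)}$.

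The resulting identity for $\rho(\lambda^{(k+1)}-\lambda)$ decomposes into exactly the same five groups of terms as in the proof of \cref{thm: main}, with the roles of $\lambda^{(k-1)}$ and $a^{(k-1)}$ now played by the fixed point $\lambda$ and $\hat{a}$. Hence I would reuse the estimates for $I_1$ through $I_5$ essentially verbatim: the advection term is treated by integration by parts against the weight $(1+\|x\|^2)^{-2\alpha}$, with the boundary contribution at infinity vanishing because $\alpha>1$ dominates the linear growth of $g$ and of $\lambda$; the analogue of $I_3$ again vanishes since $D_x g$ is independent of $a$ thanks to $g=g_1(x)+c^\top a$; and the control difference is controlled through the first-order condition and strict convexity of $l$ by $\|a^{(k)}(x)-\hat{a}(x)\|\leq\frac{\bar{c}}{c_s}\|\lambda^{(k)}(x)-\lambda(x)\|$. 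Collecting everything yields an inequality of the form $\rho d^{(k+1)}\leq(\cdots)d^{(k+1)}+(\cdots)d^{(k)}$, and for $\rho$ exceeding the same threshold $\rho_2$ of \cref{thm: main} this rearranges to $d^{(k+1)}\leq\eta\,d^{(k)}$ with a contraction factor $\eta\in(0,1)$.

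It follows immediately that $d^{(k)}\to 0$ as $k\to\infty$, which is precisely the assertion that $\lambda^{(k)}\to\lambda$ in the $L_\alpha^2$ sense. The contraction delivers the convergence directly, and it is also consistent with \cref{thm: main}: the Cauchy property already produces a limit in the complete weighted space $L_\alpha^2$, and the present estimate identifies that limit as the classical solution, limits in $L_\alpha^2$ being unique up to almost-everywhere equality.

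I expect the only genuine obstacle to be justifying that the hypothesized classical solution inherits the uniform growth and derivative bounds $C_1,\dots,C_4$, so that $\lambda$ may legitimately occupy the ``previous-iterate'' slot in the $I_1$--$I_5$ estimates; once this is granted, the computation is a line-by-line transcription of the proof of \cref{thm: main}. A secondary point requiring care is the vanishing of the integration-by-parts boundary term at infinity for the fixed point, which again rests on the linear growth of $\lambda$ together with $\alpha>1$.
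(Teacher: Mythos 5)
Your route is genuinely different from the paper's. The paper does not compare the iterates to the classical solution at all: it takes the $L^2_\alpha$-limit $\lambda$ whose existence follows from \cref{thm: main} and completeness, tests \eqref{equ: grad_pde} against $\phi\in C^\infty_0$ with the weight $(1+\|x\|^2)^{-2\alpha}$, and passes to the limit term by term to conclude that the limit satisfies \eqref{eq:lam-pde} and \eqref{eq:opt-v-grad} (in the weak/integrated sense); identification with \emph{the} classical solution is then implicit via uniqueness. Your proposal instead runs the $I_1$--$I_5$ machinery with the fixed point $(\lambda,\hat a)$ in the previous-iterate slot and derives $d^{(k+1)}\le\eta\,d^{(k)}$ directly. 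If it works, your version buys more: a geometric rate of convergence to the classical solution and, as a by-product, uniqueness of classical solutions within the admissible growth class, neither of which the paper's argument gives. The individual estimates you invoke do transfer: the $I_3$-analogue vanishes because $D_xg$ is independent of $a$, the bound $\|a^{(k)}-\hat a\|\le\frac{\bar c}{c_s}\|\lambda^{(k)}-\lambda\|$ follows from the two first-order conditions exactly as in the paper, and the $I_2$-analogue can be arranged to carry $D\lambda^{(k+1)}$ (controlled by \cref{lem:bound}) rather than $D\lambda$.

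The genuine gap is the one you flag at the end but assert away at the beginning: \cref{lem:iter} and \cref{lem:bound} are statements about the iterates generated by \textbf{PI-lambda} from an admissible $\lambda^{(0)}$; they say nothing about an arbitrary classical solution of \eqref{eq:lam-pde}. \cref{lem:iter} has the form ``if the input satisfies linear-growth and derivative bounds, then the output does,'' so for a fixed point it only yields a consistency relation \emph{conditional on} such bounds already holding --- it cannot produce them. A first-order stationary PDE of this type admits, absent growth restrictions, many classical solutions, and for one growing faster than linearly your quantity $d^{(0)}$ may be infinite, the boundary term in the integration by parts defining the $I_1$-analogue need not vanish, and the whole contraction inequality is vacuous. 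To close the argument you must either add the hypotheses $\|\lambda(x)\|\le C(1+\|x\|)$ and $\|D\lambda(x)\|$ bounded to the statement of the corollary (which is essentially what the paper means by the ``physical solution'' in the proof of \cref{lem:iter}), or derive these bounds for the classical solution by an independent argument along the characteristics. With that addition your proof is complete and strictly stronger than the paper's; without it, the step ``the a priori estimates apply to $(\lambda,\hat a)$ as well'' is unsupported.
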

\begin{proof}
 According to \cref{thm: main}, there exists $\lambda(x)\in L^2_\alpha$ such that $\lambda^k(x)\to\lambda (x)$ in $L_\alpha^2$ sense. Denote $$a(x)=\argmin_a [g(x,a)\cdot\lambda(x)+l(x,a)]$$
We then check that $\lambda(x)$ and $a(x)$ are the solutions for \eqref{eq:lam-pde} and \eqref{eq:opt-v-grad}. Integrate \eqref{equ: grad_pde} in $L_\alpha^2$ sense on both sides, and
let $\phi\in C^\infty_0 (\mathbb{R})$ be the test function, then we have 
\begin{equation}
\label{equ:int_pde_grad1}
\begin{split}
     \int_{\mathbb{R}^d}\frac{\rho \lambda^{(k)}(x)\phi(x)}{(1+\|x\|^2)^{2\alpha}}\d x=&\int_{\mathbb{R}^d} \frac{D\lambda^{(k)}(x)g\left(x,a^{(k-1)}(x)\right)\phi(x)}{(1+\|x\|^2)^{2\alpha}}\d x\\&+ \int_{\mathbb{R}^d} \frac{D_x g\left(x,a^{(k-1)}(x)\right)\lambda^{(k-1)}(x)\phi(x)}{(1+\|x\|^2)^{2\alpha}}\d x\\&+ \int_{\mathbb{R}^d} \frac{\nabla_x l\left(x,a^{(k-1)}(x)\right)\phi(x)}{(1+\|x\|^2)^{2\alpha}}\d x.
     \end{split}
\end{equation}
Consider \eqref{equ:int_pde_grad1} as $k\to \infty$, for the term on the left hand side, obviously
$$\int_{\mathbb{R}^d}\frac{\rho \lambda^{(k)}(x)\phi(x)}{(1+\|x\|^2)^{2\alpha}}\d x\to \int_{\mathbb{R}^d}\frac{\rho \lambda(x)\phi(x)}{(1+\|x\|^2)^{2\alpha}}\d x.$$
For the first term on the right hand side in \eqref{equ:int_pde_grad1}
\begin{equation*}
\begin{split}
    &\int_{\mathbb{R}^d}\frac{D\lambda^{(k+1)}(x)g(x,a^{(k)}(x))\phi (x)}{(1+\|x\|^{2\alpha})^{2\alpha}}\d x\\
=&\int_{\mathbb{R}^d} \lambda^{(k+1)}(x)D\bigg[\frac{g(x,a^{(k)}(x))\phi (x)}{(1+\|x\|^2)^{2\alpha}}\bigg]\d x\\
    =&\int_{\mathbb{R}^d} \lambda D\bigg[\frac{g(x,a(x))\phi (x)}{(1+\|x\|^2)^{2\alpha}}\bigg]\d x+\int_{\mathbb{R}^d} \lambda^{(k+1)}(x)D\bigg[\frac{(g(x,a^{(k)}(x))-g(x,a(x)))\phi (x)}{(1+\|x\|^2)^{2\alpha}}\bigg]\d x\\
    =&\int_{\mathbb{R}^d} \frac{D\lambda (x)g(x,a(x))\phi (x)}{(1+\|x\|^2)^{2\alpha}}\d x+\int_{\mathbb{R}^d} D\lambda^{(k+1)}(x)\bigg[\frac{\bar{c}(a^{(k)}(x)-a(x))\phi (x)}{(1+\|x\|^2)^{2\alpha}}\bigg]\d x\\
=&\int_{\mathbb{R}^d} \frac{D\lambda (x)g(x,a(x))\phi (x)}{(1+\|x\|^2)^{2\alpha}}\d x+\bar{c}\langle D\lambda^{(k+1)}(x)\phi(x), a^{(k)}(x)-a(x)\rangle_{L_\alpha^2}\\
\leq&\int_{\mathbb{R}^d} \frac{D\lambda (x)g(x,a(x))\phi (x)}{(1+\|x\|^2)^{2\alpha}}\d x+ \frac{\bar{c}^2}{c_s}\|D\lambda^{(k+1)}(x)\phi(x)\|_{L_\alpha^2}\|\lambda^{(k)}(x)-\lambda(x)\|_{L_\alpha^2}.\end{split}\end{equation*}
And $\|\lambda^{(k)}(x)-\lambda(x)\|_{L_\alpha^2}$ goes to $0$ when $k\to \infty$. So we have
$$\int_{\mathbb{R}^d}\frac{D\lambda^{(k+1)}(x)g(x,a^{(k)}(x))\phi (x)}{(1+\|x\|^{2\alpha})^{2\alpha}}\d x\to \int_{\mathbb{R}^d}\frac{D\lambda(x)g(x,a(x))\phi (x)}{(1+\|x\|^{2\alpha})^{2\alpha}}\d x$$in $L_\alpha^2$ sense. Similarly, for the second term, there holds
\begin{equation*}
\begin{split}
&\int_{\mathbb{R}^d} \frac{D_x g (x,a^{(k)}(x))\lambda^{(k-1)}(x)\phi (x)}{(1+\|x\|^{2\alpha})^{2\alpha}}\d x\\=& \int_{\mathbb{R}^d} \frac{D_x g (x,a(x))\lambda^{(k-1)}(x)\phi (x)}{(1+\|x\|^{2\alpha})^{2\alpha}}\d x\\&+\int_{\mathbb{R}^d} \frac{(D_x g (x,a^{(k)}(x))-D_x g (x,a(x)))\lambda^{(k-1)}(x)\phi (x) }{(1+\|x\|^{2\alpha})^{2\alpha}}\d x\\
\to &  \int_{\mathbb{R}^d} \frac{D_x g (x,a(x))\lambda(x)\phi (x)}{(1+\|x\|^{2\alpha})^{2\alpha}}\d x.
\end{split}
\end{equation*}
when $k\to \infty$. For the third term
\begin{equation*}
\begin{split}
&\int_{\mathbb{R}^d}\frac{\nabla_x l(x,a^{(k)}(x))\phi(x)}{(1+\|x\|^{2\alpha})^{2\alpha}}\d x\\
=&\int_{\mathbb{R}^d}\frac{\nabla_x (l(x,a^{(k)}(x))-l(x,a(x)))\phi(x)}{(1+\|x\|^{2\alpha})^{2\alpha}}\d x+\int_{\mathbb{R}^d}\frac{\nabla_x l(x,a(x))\phi(x)}{(1+\|x\|^{2\alpha})^{2\alpha}}\d x\\
\leq&\int_{\mathbb{R}^d}\frac{\bar{l}_2(a^k(x)-a(x))\phi(x)}{(1+\|x\|^{2\alpha})^{2\alpha}}\d x+\int_{\mathbb{R}^d}\frac{\nabla_x l(x,a(x))\phi(x)}{(1+\|x\|^{2\alpha})^{2\alpha}}\d x\\
\leq & \int_{\mathbb{R}^d}\frac{\frac{\bar{l}_2\bar{c}}{c_s}(\lambda^k(x)-\lambda(x))\phi(x)}{(1+\|x\|^{2\alpha})^{2\alpha}}\d x+\int_{\mathbb{R}^d}\frac{\nabla_x l(x,a(x))\phi(x)}{(1+\|x\|^{2\alpha})^{2\alpha}}\d x\\
\to & \int_{\mathbb{R}^d}\frac{\nabla_x l(x,a(x))\phi(x)}{(1+\|x\|^{2\alpha})^{2\alpha}}\d x
\end{split}
\end{equation*}
 when $k\to \infty$. As a result
\begin{equation}
\label{equ:int_pde_grad}
\begin{split}
     \int_{\mathbb{R}^d}\frac{\rho \lambda(x)\phi(x)}{(1+\|x\|^2)^{2\alpha}}\d x&=\int_{\mathbb{R}^d} \frac{D\lambda(x)g\left(x,a(x)\right)\phi(x)}{(1+\|x\|^2)^{2\alpha}}\d x\\&+ \int_{\mathbb{R}^d} \frac{D_x g\left(x,a(x)\right)\lambda(x)\phi(x)}{(1+\|x\|^2)^{2\alpha}}\d x+ \int_{\mathbb{R}^d} \frac{\nabla_x l\left(x,a(x)\right)\phi(x)}{(1+\|x\|^2)^{2\alpha}}\d x.
     \end{split}
\end{equation}
It shows $\lambda(x)$ and $a(x)$ are solutions for \eqref{eq:lam-pde} and \eqref{eq:opt-v-grad}, respectively.
\end{proof}

\section{Numerical Methods} 
\label{sec:nm}

Our algorithm is the  policy iteration based on $\lambda$
and it is clear that the main challenge  is to solve 
the system of linear PDEs \eqref{eq:iterlambda2}
in any dimension. It is worthwhile to point out 
 that each PDE in \eqref{eq:iterlambda2} 
 is the same type of PDE as the GHJE  \eqref{779}.
So, the Galerkin approximate approach can be also 
applied for these equations  in \eqref{eq:iterlambda2},
 but to directly aim for the high dimensional problems,
 we use the method of characteristics and the 
 supervised learning. 
 
 Specifically,
 we first consider a family of functions, such as neural networks,  $\widehat{\Phi}(x;\theta)$
 to 
 numerically represent the value function,
  where $\theta\in\Theta$ is the set of parameters.
The gradient-value function $\widehat{\lambda}(x;\theta)=\nabla_x\widehat{\Phi}(x;\theta)$
 is then computed by automatic differentiation instead of finite difference.
 Secondly,  in each policy iteration $k$, 
 we compute the characteristics by numerical integrating the 
 state dynamics and calculate the true value $\Phi^{(k+1)}$ and gradient-value functions
 $\lambda^{(k+1)}$
 on the  characteristics curves
 based on the  PDE \eqref{779} and \eqref{eq:iterlambda2}.
 Then  these labelled data $(X(t), \Phi(X(t)), \lambda(X(t)))$
are fed into the supervised learning protocol by minimizing the mean squared error.
to  find the optimal $\theta^{(k+1)} $. 

In sequel, we discuss the details of method of characteristics 
on solving the PDEs \eqref{779} and \eqref{eq:iterlambda2}
on   characteristics curves.
We drop the {\bf PI-lambda} iteration index $k$ in this section
for notational ease. 

\subsection{ Method of characteristics}
Bearing in mind the similar form of  \eqref{779} and \eqref{eq:iterlambda2}
which are both  hyperbolic linear PDEs with the same advection,  
we   consider a general discussion. 
 Given a  control function $a(\cdot)$, 
 we denote  
 $G(x)=g(x,a(x))$ and define  $X(t)$ as the characteristic curve satisfying the following ODE with an arbitrary initial state $X_0 \in \mathbb{R}^d$:
\begin{equation}
\begin{cases}
{\d}X(t)=G(X) dt,\label{eq:4Aode}\\
X(0)=X_0.
\end{cases}
\end{equation}
We consider the following  PDE of the function $v$ 
\begin{equation}
\label{eq:generic1}
\rho v(x) - Dv(x) \cdot G(x) = R(x)
\end{equation}
where the source term $R$ is given.
Note that  \eqref{779} and \eqref{eq:iterlambda2} are special cases of \eqref{eq:generic1}
with different $R$ terms. Along the characteristic curve $X(t)$,  
by \eqref{eq:4Aode}  and \eqref{eq:generic1} we derive that 
\begin{equation*}
\begin{split}
\frac{\d}{\dt}\left[e^{-\rho t}v(X(t))\right]=-\rho e^{-\rho t}  v (X(t))+  e^{-\rho t}Dv(X(t))\cdot \frac{\d X}{\dt}=-  e^{-\rho t}R(X(t)).
\end{split}    
\end{equation*}
After taking integral in time,\begin{equation}
\begin{split}
\lim_{s\to +\infty}e^{-\rho s}v (X(s)) - e^{-\rho t}v(X(t))
=\int^{+\infty}_t  -e^{-\rho \tau} R(X(\tau))  \d \tau
\label{4Afirst}
\end{split}
\end{equation}
As time $s$ tends to infinity, suppose $\rho$ is large enough, we have
\[ 
v(X(t)) =e^{\rho t}\int_t^{+\infty} e^{-\rho \tau} R(X(\tau))\d \tau.
\]



\subsection{ Compute   the value function and the gradient
on the characteristics}
We apply the above method of characteristics to compute   the value function $\Phi$ and the gradient $\lambda = \nabla \Phi$.
 For the value function   in equation  \eqref{lin_HJE}, the $R$ function 
in \eqref{eq:generic1} is $l(x,{a}(x))$.
Then $\Phi$ in  \eqref{lin_HJE} has the values on $X(t)$:
\begin{equation}
\Phi (X(t)) = \int_t^{+\infty} e^{-\rho (\tau-t)} l(X(\tau), a(X(\tau)))\, \d \tau.
\end{equation}
 For   $\lambda^{(k+1)}$ in \eqref{eq:iterlambda2}, for each component $i$,
 $R(x)$ in \eqref{eq:generic1} now  refers to the right hand side  in  function  \eqref{eq:iterlambda2}, then 
  \begin{equation}
\lambda^{(k+1)}_i(x(t  ))
=
 \int_t ^{+\infty} e^{-\rho (\tau-t)}  
 r^{(k)}_i(\tau)\, \d \tau.
\end{equation}
where $$ r^{(k)}_i(\tau)= \sum_n   \frac{\partial g_n}{\partial{x_i}}  \lambda^{(k)}_n(X(\tau))+   \frac{\partial l }{\partial{x_i}}(X(\tau), {a}^{(k)}(X(\tau))).$$

\subsection{Supervised learning: interpolate the characteristic curve to the whole space}

With a  
characteristic curve
$X(\cdot)$ computed from 
\eqref{eq:4Aode}, we can obtain the value of  the value function $\Phi$ and the gradient $\lambda_i=\frac{\partial \Phi}{\partial x_i}$, $i=1,\ldots, d$,  along $X(t)$  {\it simultaneously}. By running multiple characteristic curves  starting from a set of the initial points
$\{X_0^{(n)}$,$1\leq n\leq N\}$ which are generally sampled uniformly,
 we obtain   a collection of observations  of $\Phi(X^{(n)}(t))$ and $\lambda (X^{(n)}(t))$  
 on these characteristics trajectories   $\set{X^{(n)}(t):  t\geq 0, 1\leq n\leq N }$. In practice, the continuous  path $X^{(n)}(t)$ 
is represented  by a finite number of ``images'' on the curve and these images on each curve
are chosen to have the roughly equal distance to each neighbouring image.  
 
 To interpolate the labelled data from the computed curves to the whole space,
 a family of approximate functions  $ \widehat{\Phi}(x;\theta)$ should be proposed   first by the users, which
 could  be  Galerkin form of  basis functions,  radial basis functions or   neural networks, etc. 
Then the  parameters  $\theta$ is found by minimizing the following  loss function
$ L(\theta)$   combining  two  mean square errors: 
\begin{equation}
\label{def:loss-fun}
\begin{split}
 L(\theta) =  &\mu  \sum_{n=1}^N \int \left\|\Phi(X^{(n)}(t))-\widehat{\Phi}_{\theta}(X^{(n)}(t))\right\|^2 \dt
 \\
&+(1-\mu)   \sum_{n=1}^N \int \left\| \lambda(X^{(n)}(t))-\nabla\widehat{\Phi}_{\theta}(X^{(n)}(t))\right\|^2 \dt
\end{split}
\end{equation}
where $0\leq\mu\leq 1$ is a factor 
to balance the loss from the value function and the gradient.
$\|\cdot\|$ is the Euclidean norm in $\Real^d$.  The gradient $\nabla \widehat{\Phi}_{\theta}$ is the gradient 
w.r.t. the state variable $x$ and computed by automatic differentiation.
  The training process of the models is to minimize the loss function 
\eqref{def:loss-fun} w.r.t.  $\theta$ by some standard  gradient-descent optimization methods such as 
ADAM \cite{kingma2014adam}.

A few remarks are discussed now to explain our
practical algorithm more clearly.
\begin{itemize}
    \item 
    Our algorithmic framework is the policy iteration based on $\lambda$. So 
    the computation of the data points on the characteristics curves and the training of 
the loss    \eqref{def:loss-fun} are performed at each policy iteration $k$.
One can adjust the number of characteristic trajectories $N$ and the number of training steps 
(the steps within the minimization procedure for the loss function).  
The trajectory number $N$ determines the amount of data and the training step determines the accuracy of supervised learning.

\item The loss \eqref{def:loss-fun} simply writes the contribution from each trajectory 
in the  continuous $L_2$ integration in time. 
Practically, this integration is represented by the sum from each discrete point on the curves.
For better fitting of the function  $\widehat{\Phi}_\theta$,  these points  are not supposed to  correspond to equal step size in time variable but should be arranged to   spread out evenly in space.
There are many practical ways to achieve this target such as using the arc-length parametrization
or setting a small ball as the  forbidden region for each prior point.
Our numerical tests use the arc-length parametrization for each trajectory.   
    \item 
    The choice of the initial 
    states $\set{X_0^{(n)}: 1\leq n\leq N}$
 can affect how the corresponding characteristics curves behave 
 in the space and we hope these finite number of curves can explore the space efficiently.
Some adaptive ideas are worth a try in practice. For example, 
more points may be sampled where the residual of
HJE is larger. However, 
since the whole characteristics curves nonlinearly depend on the initial, 
we use the uniform distribution  
in our numerical tests  for simplicity.
 \end{itemize}


\section{Numerical Examples}
\label{sec:nex}
This section presents the  numerical experiments to 
show   the advantage of our new method of  the policy iteration using $\lambda$  and $\Phi$  
over the method only using $\Phi$. We test three problems in all:  Linear-quadratic problem, Cart-pole balancing task and Advertising process.

\subsection{Linear-quadratic problem}

The control problem to be solved is a $d$-dim linear-quadratic case with the cost function
$$
\begin{aligned}
J(u)=&\int_0^\infty e^{-\rho t}( \|x(t)\|^2+\|u(t)\|^2)\text{d} t
\end{aligned}
$$subject to the dynamic system
$$\begin{aligned}\dot{x}&=Ax+Bu, \quad x(0)=x_0.\end{aligned}$$
Instead of solving the Riccati equation for this problem,
we apply  our method in Section \ref{sec:nm}
by using the network structure  
$$\hat{\Phi}_Q(x)=\frac{1}{2}x^\top(Q^\top+Q)x$$
for simplicity where $Q$ is the parameter to be determined, 
since we know the true value function is a quadratic function.
This type of parametrization can eliminate the approximation error
since the true value function belong to this family of parametrized functions. 

We apply the algorithm to the following three  choices of $A$ with $d=5$,  $B=I_d$
and $\rho=1$.
\begin{itemize}
    \item Test 1: $A=I_d$ where $I_d$ is the $d$-dim  identical matrix.
    \item  Test 2 :
$A=(a^Ta+I_d)/d  $ where $a$ is an $d$-by-$d$ matrix.
 Every component of $a$ is i.i.d. random variables sampled from standard normal distribution. 
\item Test 3: The setting of Test 3 is the same as Test 2 with a different realization of  $A$.

\end{itemize}
In our numerical tables,
 ``T1", ``T2" and ``T3"  refer to Test 1, Test 2 and Test 3 defined above, respectively.

We compute the value function in the box $[-1,1]^d$. The initial values of the characteristics $X_0^{(n)}$ are uniformly
sampled from this box. Only the labelled data on the trajectories inside the box are used to train the model $\widehat{\Phi}_Q$.
The training process to minimize the loss $L(\theta)$
uses the full-batch ADAM \cite{kingma2014adam}.

We measure  the accuracy of the numerical solution $\widehat{\Phi}_Q$ by the average residual of HJB equation of $N_p=10000$ points uniformly selected from $[-1,1]^d$:
\begin{align}\label{equ:DQ}
error=\frac{1}{N_p}\sum^{N_p}_{j=1}
\big\|
\rho \hat{\Phi}_Q(x^{(j)})
-&g\left(x^{(j)}, a^*(x^{(j)})\right) \cdot \nabla \hat{\Phi}_Q(x^{(j)})-l\left(x^{(j)}, a^*(x^{(j)})\right)
\big\|
\end{align}  
where  $a^*(x)=-B^\top\nabla \hat{\Phi}_Q(x)$.

We conduct two experiments on each of the above three tests
for different purposes to benchmark and understand  our algorithms.

{\bf Experiment 1}.
In Experiment 1, we study how insufficient amount of  
characteristics data will affect the accuracy.
Specifically, we change the number of the characteristic trajectories  $N$ between $2$ and $10$
while keeping all other settings the same. Fewer trajectories mean less amount of labelled data from the method of characteristics. 
At each policy iteration,
  the training for the supervised learning 
  to minimize the loss $L(\theta)$ takes a fixed number of 1000 ADAM steps  or 
reaches a prescribed low tolerance. 
The number of policy iterations is fixed as 30.

\begin{table}[]
\scriptsize
    \centering
    \begin{tabular}{m{0.5cm}<{\centering} | m{1cm}<{\centering}|m{1.cm}<{\centering}m{1.cm}<{\centering}m{1.cm}<{\centering}m{1.cm}<{\centering}m{1.cm}<{\centering}}
\hline\hline \multirow{2}{0.5cm}{}&  \multirow{2}{1cm}{\centering $\mu$} & \multicolumn{5}{c}{The number of characteristics trajectories $N$} \\
\cline{3-7}& & 2 &  4 & 6 & 8& 10  \\ \hline\hline
	\multirow{6}*{\centering T1} &$1.0$&\textit{Diverge}  & \textit{Diverge}  & \textit{Diverge} & \textit{0.0251}
&\textit{0.0080}
\\
 &$0.8$ &0.0382&0.0069& 0.0032
 &0.0027
  &0.0024\\
&$0.6$ & 0.0251 &0.0056  & 0.0022 & 0.0018 &0.0016\\
& $0.4$ & \textbf{0.0088} &  0.0041&  0.0020& 0.0016 & 0.0019\\
& $0.2$ &0.0017  & 0.0030 & \textbf{0.0015} & 0.0019 &0.0014\\
& $0.0$ & 0.0106 & \textbf{0.0026} & 0.0022 & \textbf{0.0013} & \textbf{0.0012}\\\hline\hline
	\multirow{6}*{\centering T2} &  $1.0$&2.9116& \textit{Diverge} & \textit{0.0860} & \textit{0.0281} & 0.0112
\\
& $0.8$& 0.0360 & \textbf{0.0097} & 0.0049 & 0.0060 & 0.0058
\\
&$0.6$ & 0.0370 & 0.0128 & \textbf{0.0046} & 0.0058 & \textbf{0.0045}\\
& $0.4$ & \textbf{0.0193} & 0.0204 & 0.0140 & \textbf{0.0044} & 0.0057\\
& $0.2$ & 0.0280 & 0.0193 & 0.0220 & 0.0198 & 0.0094  \\
& $0.0$ & \textit{Diverge} & \textit{Diverge} & 0.0358 & 0.0085 & \textit{0.0413} \\\hline\hline
	\multirow{6}*{\centering  T3} &  $1.0$&6.3956&\textit{1.3894}&0.1372&0.0262&0.0205
\\
& $0.8$& \textbf{0.0544}& \textbf{0.0269}&0.0259&\textbf{0.0153}&0.0120
\\
& $0.6$ & 0.1079 & 0.0365 & \textbf{0.0236} & 0.0162 & \textbf{0.0081}\\
& $0.4$ & 0.0806 & 0.0833& 2.6797 & 0.1816 & 0.0203 \\
& $0.2$ & \textit{Diverge} & 0.0754 & 0.2794 & 31.3591 & 0.0481 \\
& $0.0$ & \textit{Diverge} & 0.1773 & \textit{Diverge} & \textit{Diverge} & \textit{0.0834} \\\hline\hline
\end{tabular}
    \caption{Error (HJB residual) for various $\mu$ when the number of trajectories $N$ changes. ``T1", ``T2" and ``T3" refer to the three tests in the text.}
    \label{tab:lqr_trajectorynumber} 
\end{table}

Table \ref{tab:lqr_trajectorynumber} shows the results when $\mu$ varies
for each test. For each given $N$,
the collection of  $N$ initial states 
are the same at different $\mu$ for consistent comparison.
If the numerical value of $\Phi$ goes to infinity, we mark ``Diverge" in the table.  Otherwise, the average residual errors defined in \eqref{equ:DQ} of the last 20 iterations is reported.   For each setting,
 the best residual is highlighted in bold symbols and the worst residual (including the diverge case) is emphasised in italics.  
Form   Table \ref{tab:lqr_trajectorynumber}, we can see for all three tests,
$\mu=1$ (only using the value) or $\mu=0$ (only use the gradient-value) 
has the worst performance and may diverge in many cases,
while the loss corresponding to  $\mu$ strictly between $0$ and $1$ 
can achieve the best accuracy and we do not see divergence at all. 
However,  the value $\mu$ corresponding to 
the best accuracy result changes from test to test.
This table also confirms that with the increasing number $N$ of characteristics,
the final accuracy of the numerical value functions always 
gets better and better since more labelled data are provided.

\begin{figure*}[htbp]
\centering
\subfigure[Test 1 (Trajectory: 8)]{
\begin{minipage}[t]{0.32\linewidth}
\centering
\includegraphics[width=\textwidth]{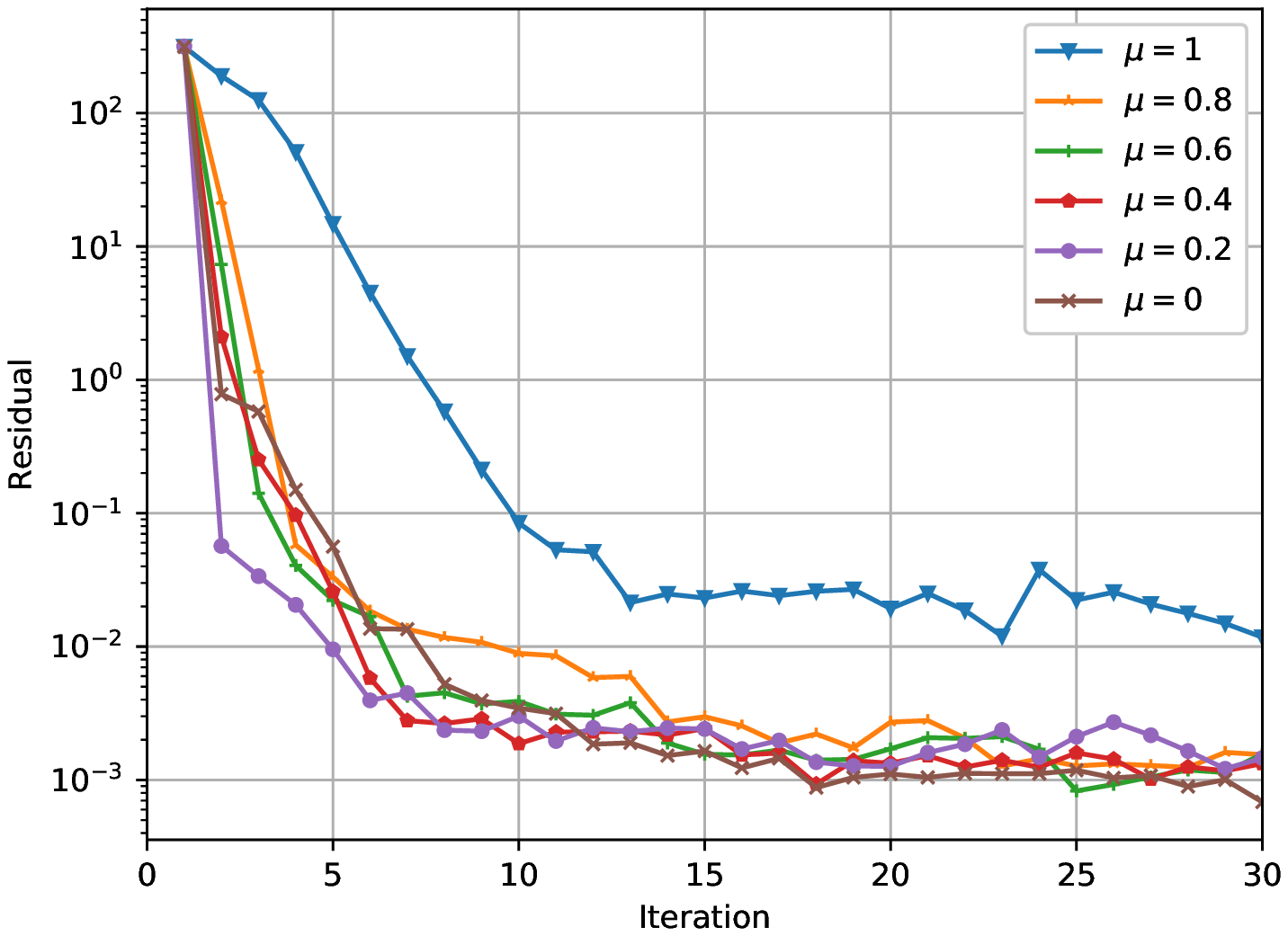}
\end{minipage}%
}%
\subfigure[Test 1 (Trajectory: 10)]{
\begin{minipage}[t]{0.32\linewidth}
\centering
\includegraphics[width=\textwidth]{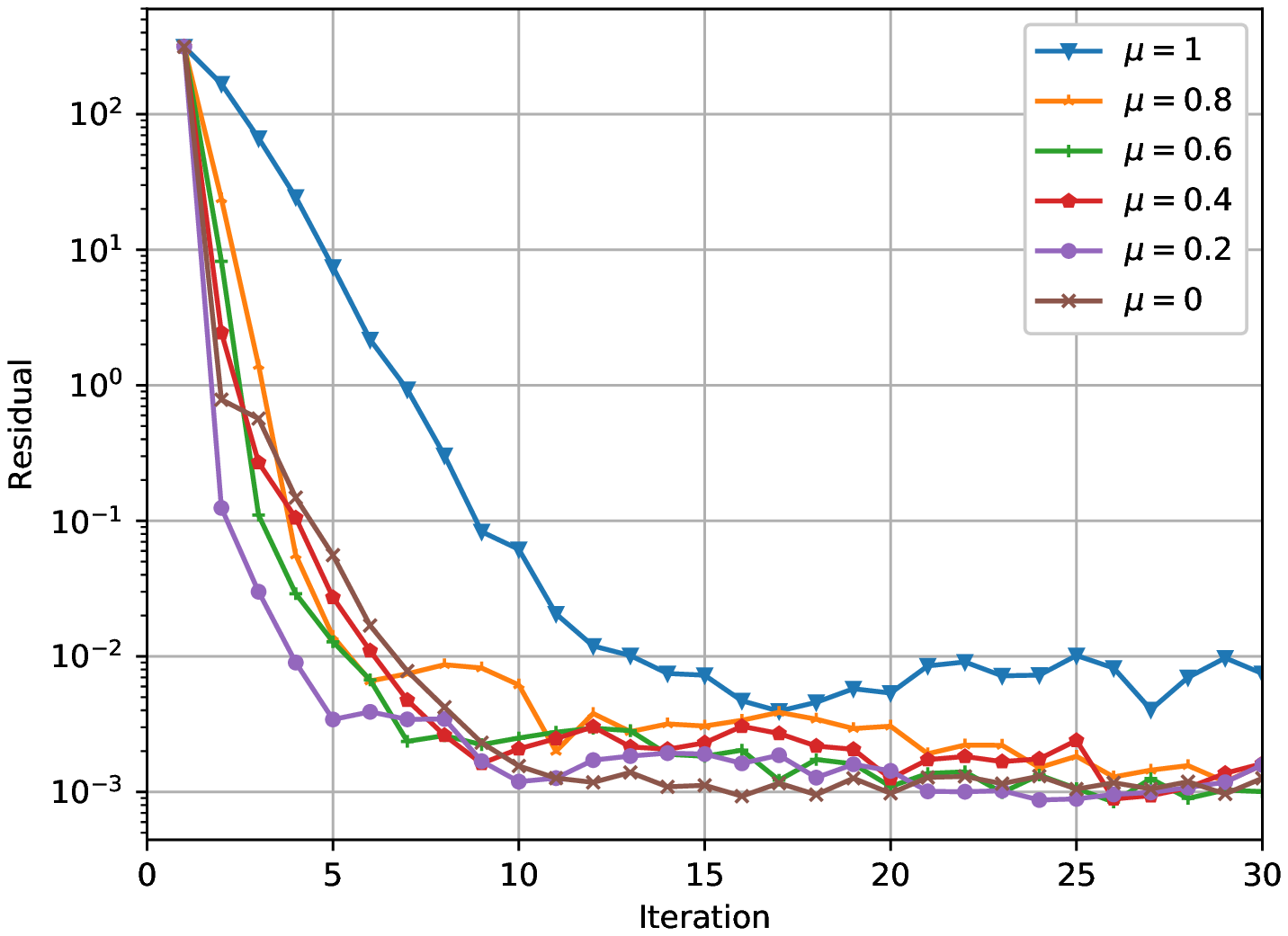}
\end{minipage}%
}%
\subfigure[Test 2 (Trajectory: 6)]{
\begin{minipage}[t]{0.32\linewidth}
\centering
\includegraphics[width=\textwidth]{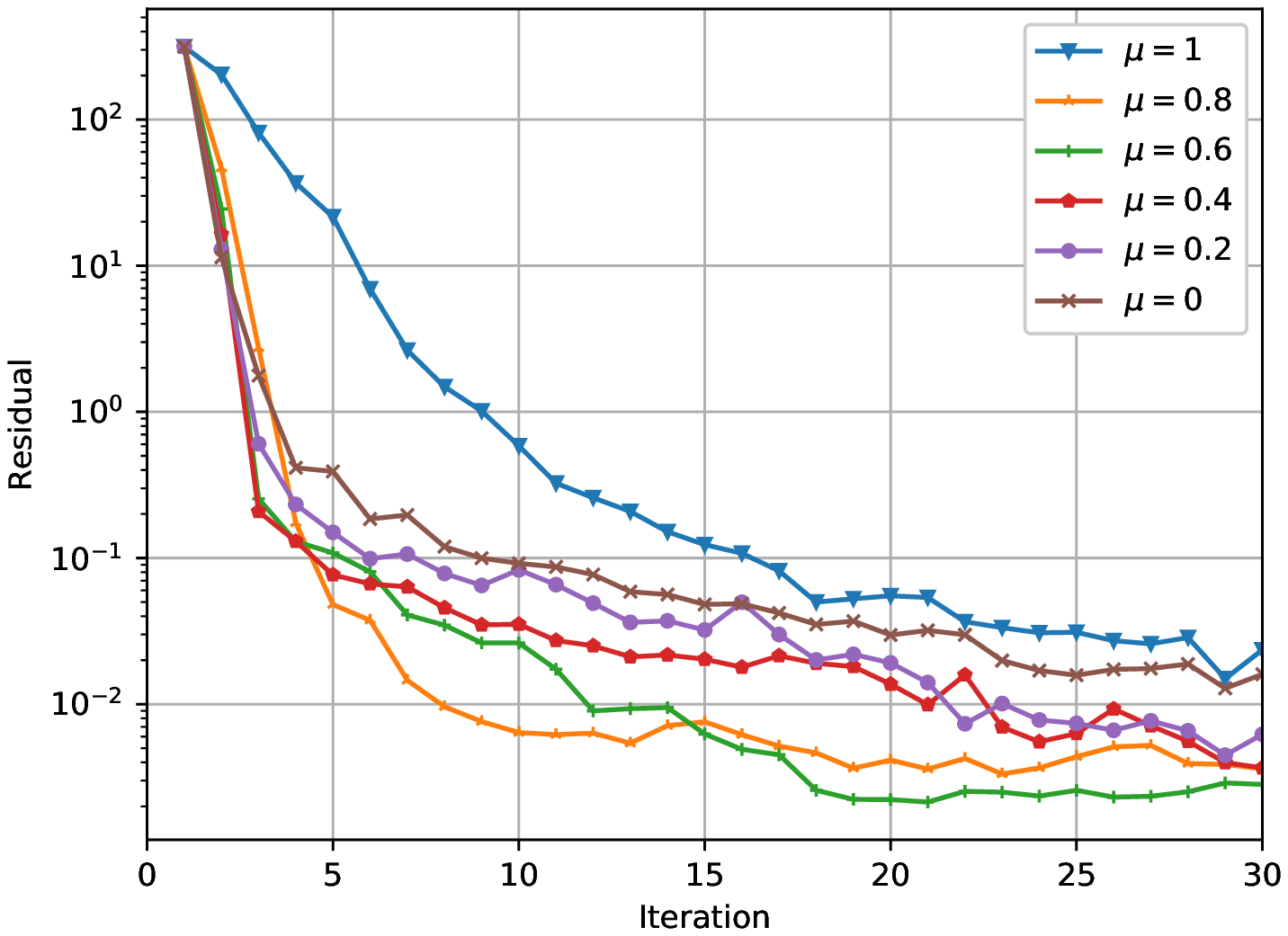}
\end{minipage}
}%

\subfigure[Test 2 (Trajectory: 8)]{
\begin{minipage}[t]{0.32\linewidth}
\centering
\includegraphics[width=\textwidth]{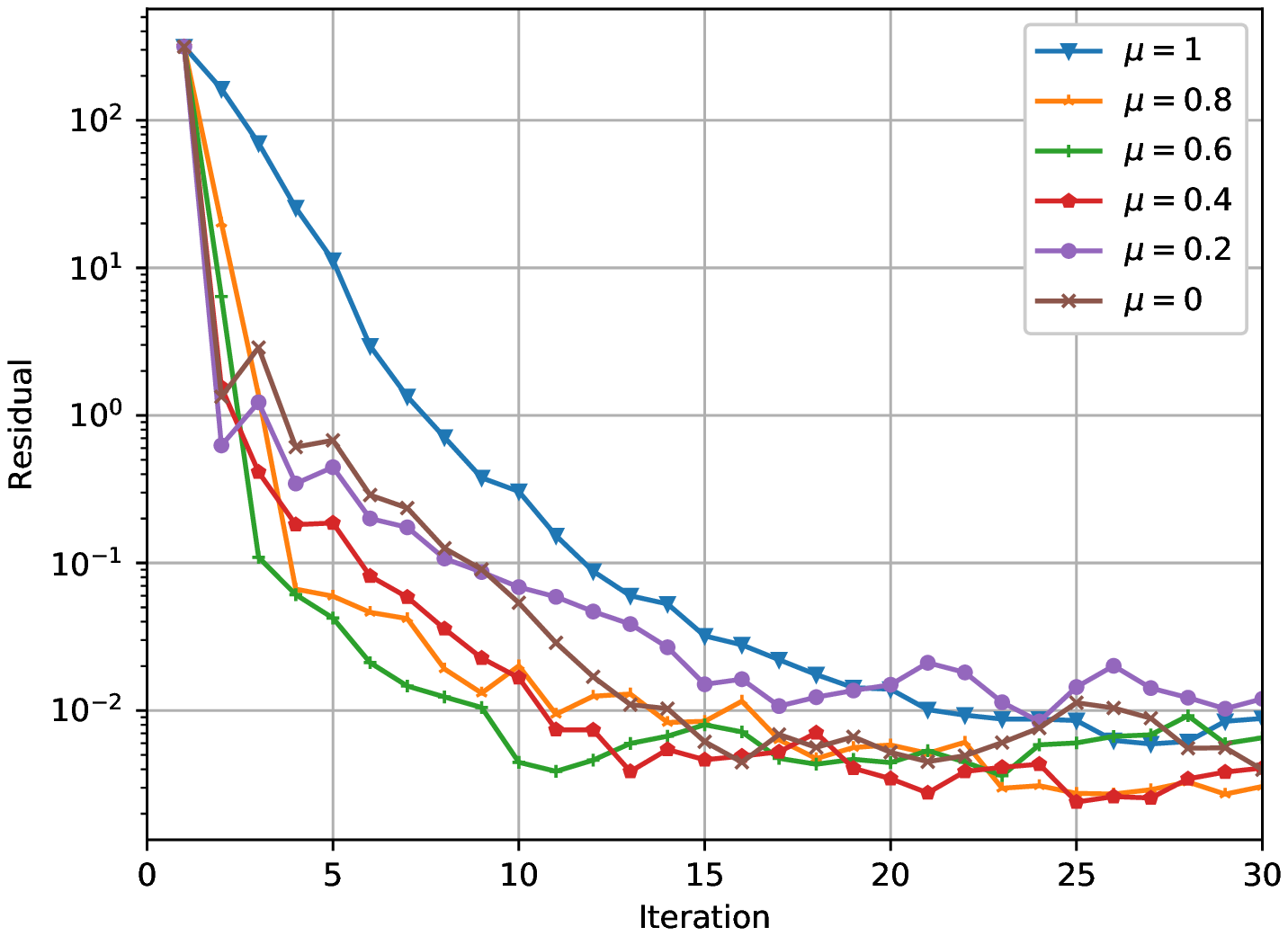}
\end{minipage}%
}%
\subfigure[Test 3 (Trajectory: 4)]{
\begin{minipage}[t]{0.32\linewidth}
\centering
\includegraphics[width=\textwidth]{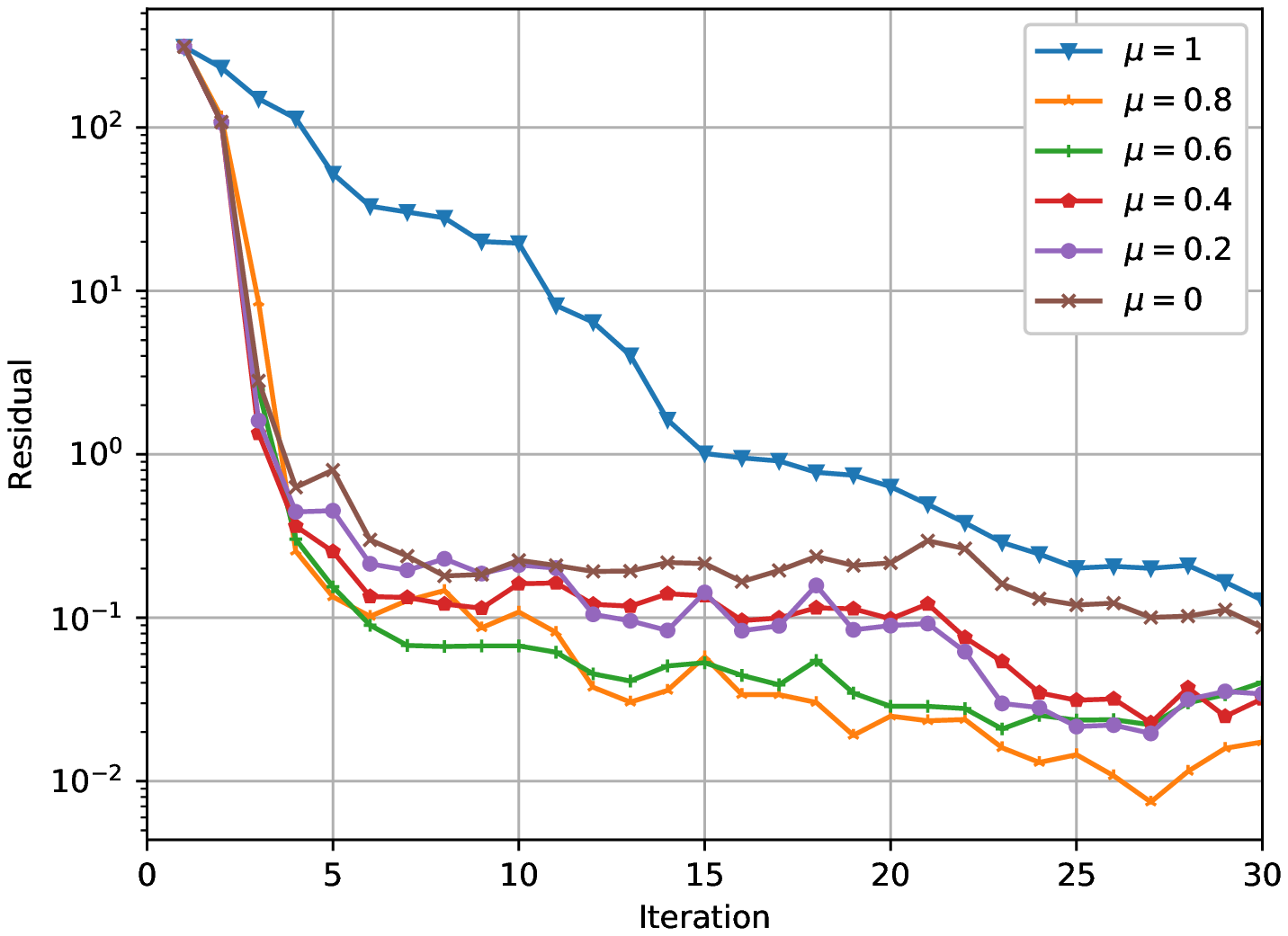}
\end{minipage}%
}%
\subfigure[Test 3 (Trajectory: 10)]{
\begin{minipage}[t]{0.32\linewidth}
\centering
\includegraphics[width=\textwidth]{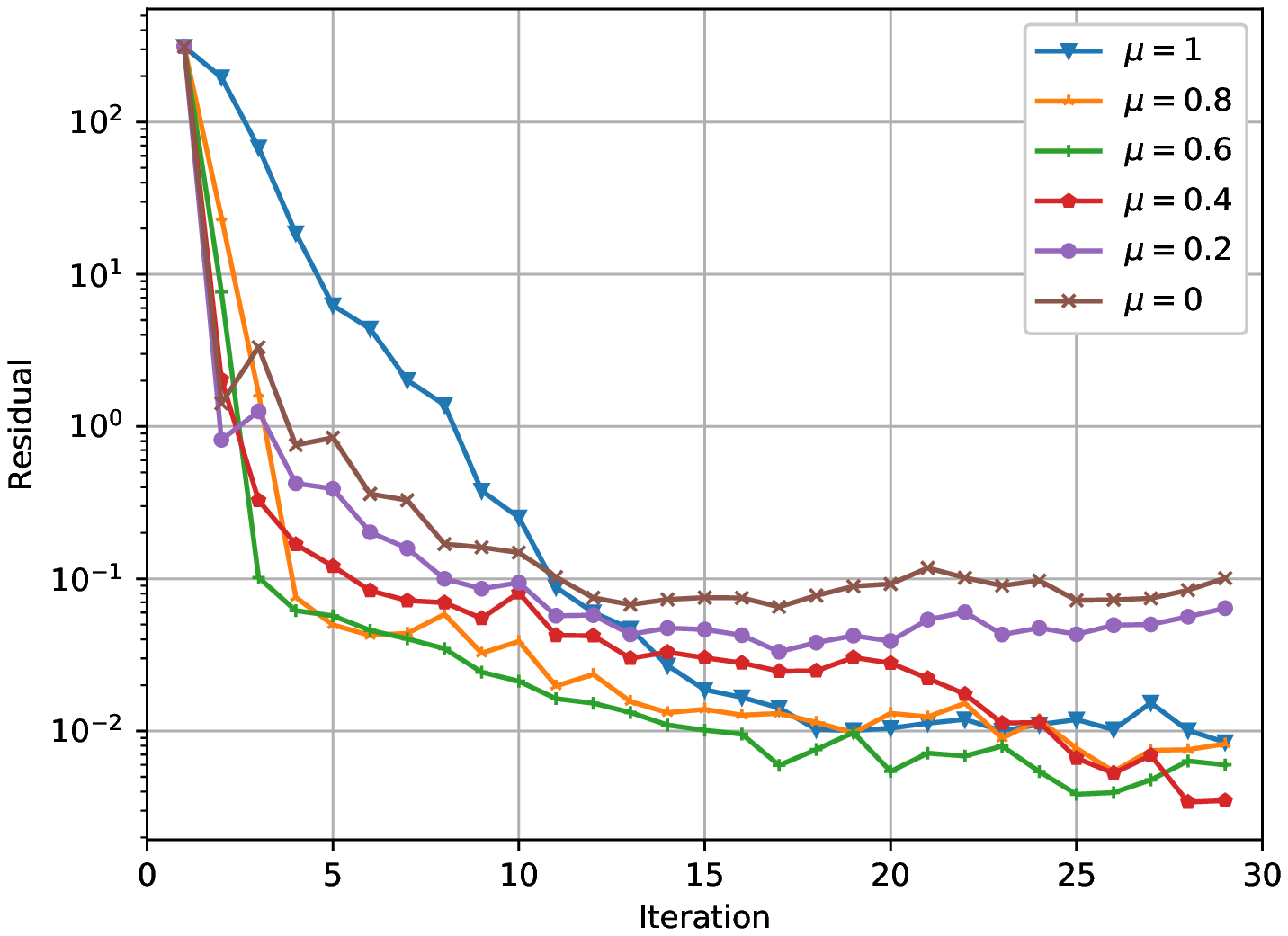}
\end{minipage}
}%

\caption{Error (HJB residual) vs policy iteration for various $\mu$.}
\label{fig:lqr_tra}
\end{figure*}


 {\bf Experiment 2}.
The purpose of  Experiment 2 is to test 
the performance of the methods  when the training process is not exact. 
Recall that in Experiment 1, we have set the maximum steps in training process 
as a sufficiently large number 1000. Here,  we limit this maximum training step 
 to  the range $10\sim200$.  
A small maximum training step  means less accuracy in fitting the value function. 
For each test, the algorithm is run up to  120 policy iterations and
the  number of  characteristics trajectories is fixed as a relatively small number $N=5$ now.

\begin{table}[htbp]
\centering
\scriptsize
\begin{tabular}{m{0.5cm}<{\centering}|m{1cm}<{\centering}|m{1.48cm}<{\centering}m{1.48cm}<{\centering}m{1.48cm}<{\centering}m{1.48cm}<{\centering}m{1.48cm}<{\centering}}
\hline\hline \multirow{2}{0.5cm}{}&  \multirow{2}{1cm}{\centering $\mu$} & \multicolumn{5}{c}{Train step} \\
\cline{3-7}
    & & 10 &  50 & 100 & 150& 200  \\ \hline\hline
 \multirow{6}*{T1}&$1.0$& \textit{1.476} & \textit{Diverge} & \textit{Diverge} & \textit{1.03$\times 10^{-2}$}& \textit{1.55$\times 10^{-2}$}
\\
 &$0.8$ &\textbf{4.36}$\boldsymbol{\rm \times 10^{-4}}$& \textbf{4.46$\boldsymbol{\rm \times 10^{-4}}$} & \textbf{4.58$\boldsymbol{\rm \times 10^{-4}}$}& \textbf{4.57$\boldsymbol{\rm \times 10^{-4}}$}&\textbf{4.51$\boldsymbol{\rm \times 10^{-4}}$}\\
&$0.6$ & 6.12$\rm \times 10^{-4}$& 6.18$\rm \times 10^{-4}$ & 6.25$\rm \times 10^{-4}$ & 6.25$\rm \times 10^{-4}$ & 6.45$\rm \times 10^{-4}$\\
& $0.4$ & 7.02$\rm \times 10^{-4}$& 7.10$\rm \times 10^{-4}$ & 7.14$\rm \times 10^{-4}$ & 7.12$\rm \times 10^{-4}$&  7.30$\rm \times 10^{-4}$\\
& $0.2$ & 7.62$\rm \times 10^{-4}$ & 7.66$\rm \times 10^{-4}$ & 7.68$\rm \times 10^{-4}$ & 7.70$\rm \times 10^{-4}$& 7.84$\rm \times 10^{-4}$\\
& $0.0$ & 8.04$\rm \times 10^{-4}$ & 8.02$\rm \times 10^{-4}$ & 8.08$\rm \times 10^{-4}$ & 8.10$\rm \times 10^{-4}$ & 8.19$\rm \times 10^{-4}$\\\hline\hline
\multirow{6}*{T2}& $1.0$& \textit{0.146} & \textit{Diverge} &\textit{Diverge} & \textit{Diverge} & \textit{Diverge}
\\
& $0.8$ & \textbf{2.93$\boldsymbol{\rm \times 10^{-4}}$} & \textbf{2.80$\boldsymbol{\rm \times 10^{-4}}$} & \textbf{2.84$\boldsymbol{\rm \times 10^{-4}}$} & \textbf{2.88$\boldsymbol{\rm \times 10^{-4}}$} & \textbf{2.92$\boldsymbol{\rm \times 10^{-4}}$} \\
&$0.6$ & 4.13$\rm \times 10^{-4}$ & 3.88$\rm \times 10^{-4}$ & 3.83$\rm \times 10^{-4}$ & 3.83$\rm \times 10^{-4}$ & 3.82$\rm \times 10^{-4}$\\
& $0.4$ & 4.42$\rm \times 10^{-4}$ & 4.51$\rm \times 10^{-4}$ & 4.46$\rm \times 10^{-4}$ & 4.46$\rm \times 10^{-4}$ & 4.36$\rm \times 10^{-4}$\\
& $0.2$ & 4.56$\rm \times 10^{-4}$ & 4.87$\rm \times 10^{-4}$ & 4.57$\rm \times 10^{-4}$ & 4.70$\rm \times 10^{-4}$& 4.76$\rm \times 10^{-4}$\\
& $0.0$ & 4.83$\rm \times 10^{-4}$& 5.15$\rm \times 10^{-4}$ & 4.98$\rm \times 10^{-4}$ & 5.07$\rm \times 10^{-4}$ & 2.23$\rm \times 10^{-3}$\\\hline\hline
\multirow{6}*{T3}&  $1.0$& \textit{7.47$\times 10^{-2}$} & \textit{Diverge} & 5.91$\rm \times 10^{-4}$& 8.48$\rm \times 10^{-4}$ & \textit{1.21$\times 10^{-2}$}
\\
& $0.8$ & 3.53$\rm \times 10^{-4}$ & \textbf{2.32$\boldsymbol{\rm \times 10^{-4}}$} & \textbf{2.36$\boldsymbol{\rm \times 10^{-4}}$} &2.45$ \rm \times 10^{-4}$ & \textbf{2.61$\boldsymbol{\rm \times 10^{-4}}$}\\
&$0.6$ & \textbf{3.39$\boldsymbol{\rm \times 10^{-4}}$}& 3.06$\rm \times 10^{-4}$ & 3.17$\rm \times 10^{-4}$ & 3.35$\rm \times 10^{-4}$ & 3.19$\rm \times 10^{-4}$\\
& $0.4$ & 4.31$\rm \times 10^{-4}$ & 3.59$\rm \times 10^{-4}$ & 3.56$\rm \times 10^{-4}$ & \rm 3.65$\rm \times 10^{-4}$ &\rm 7.77$\rm \times 10^{-4}$\\
& $0.2$ & 4.53$\rm \times 10^{-4}$ & 3.96$\rm \times 10^{-4}$ & 4.07$\rm \times 10^{-4}$ & \textbf{1.08$\boldsymbol{\rm \times 10^{-4}}$} & 1.05$\rm \times 10^{-2}$\\
& $0.0$ & 5.09$\rm \times 10^{-4}$ & 4.20$\rm \times 10^{-4}$& \textit{8.48$\times 10^{-3}$} & \textit{5.20$\times 10^{-3}$} & 8.31$\rm \times 10^{-3}$\\ \hline\hline
\end{tabular}
    \caption{ Error (HJB residual) for various $\mu$  when the training steps change.}
    \label{tab:lqr_trainstep}
\end{table}

The average HJB residuals  of the last 20 policy iterations are 
reported  in Table \ref{tab:lqr_trainstep} to measure the accuracy.
 This  table shows that $\mu=1$ has the worst performance in Test 1 and Test 2
and  neither $\mu=1$ nor $\mu=0$ can perform well in Test 3. It is confirmed that 
the setting of $\mu$ strictly between $0$ and $1$
is more robust to incomplete training and also has better performance in accuracy.
We can also see from this table that there is in general no necessity to 
use strict stopping criteria for training the interpolation for $\widehat{\Phi}_Q$.
Even a small training step $10$ with a choice  $\mu\in (0,1)$ can
have the same final accuracy   as the large training step $200$.

\begin{figure*}[htbp]
\centering
\subfigure[Test 1 (Train step: 10)]{
\begin{minipage}[t]{0.32\linewidth}
\centering
\includegraphics[width=\linewidth]{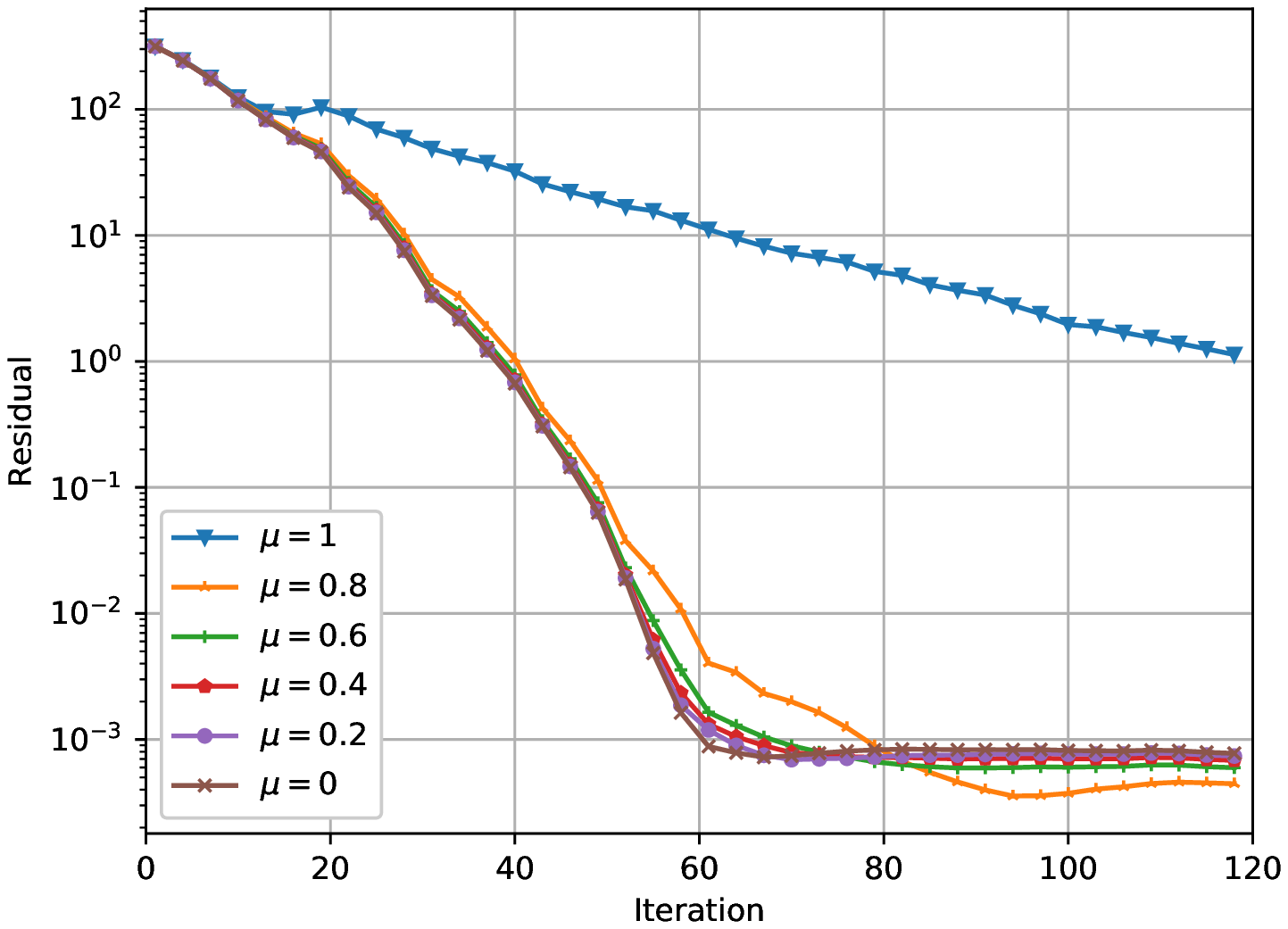}
\end{minipage}%
}%
\subfigure[Test 1 (Train step: 200)]{
\begin{minipage}[t]{0.32\linewidth}
\centering
\includegraphics[width=\linewidth]{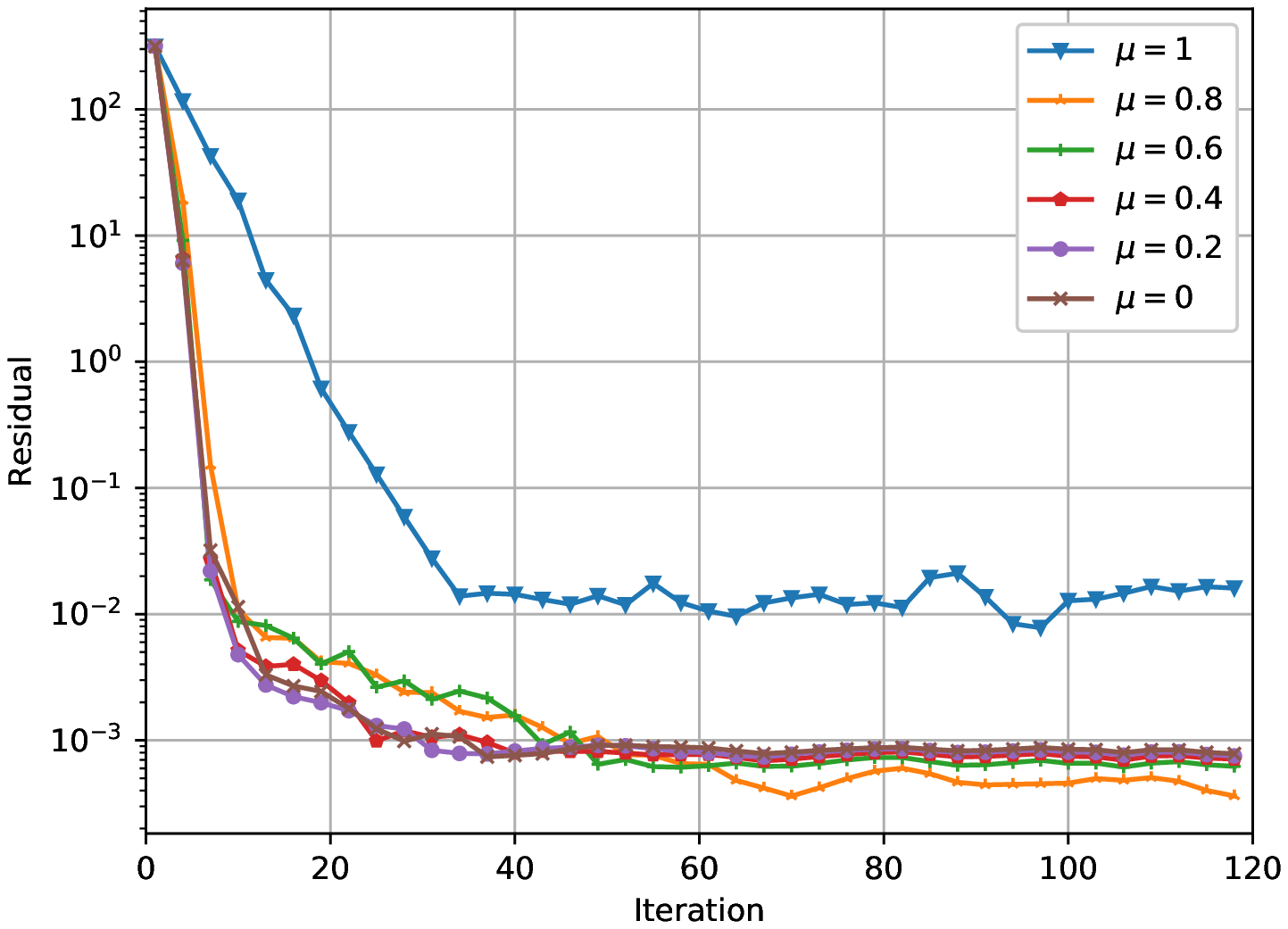}
\end{minipage}%
}%
\subfigure[Test 2 (Train step: 10)]{
\begin{minipage}[t]{0.32\linewidth}
\centering
\includegraphics[width=\linewidth]{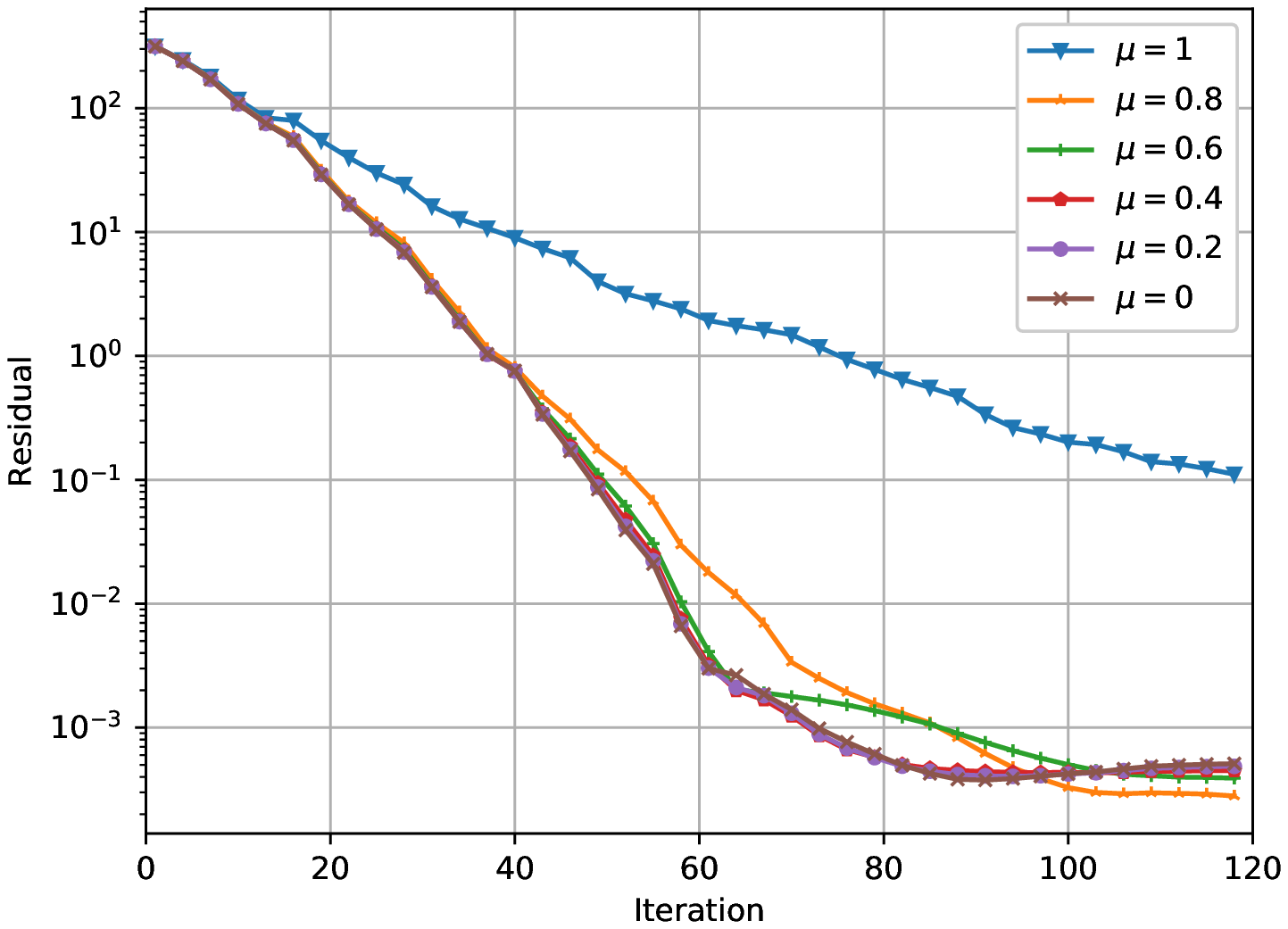}
\end{minipage}
}%

\subfigure[Test 2 (Train step: 200)]{
\begin{minipage}[t]{0.32\linewidth}
\centering
\includegraphics[width=\linewidth]{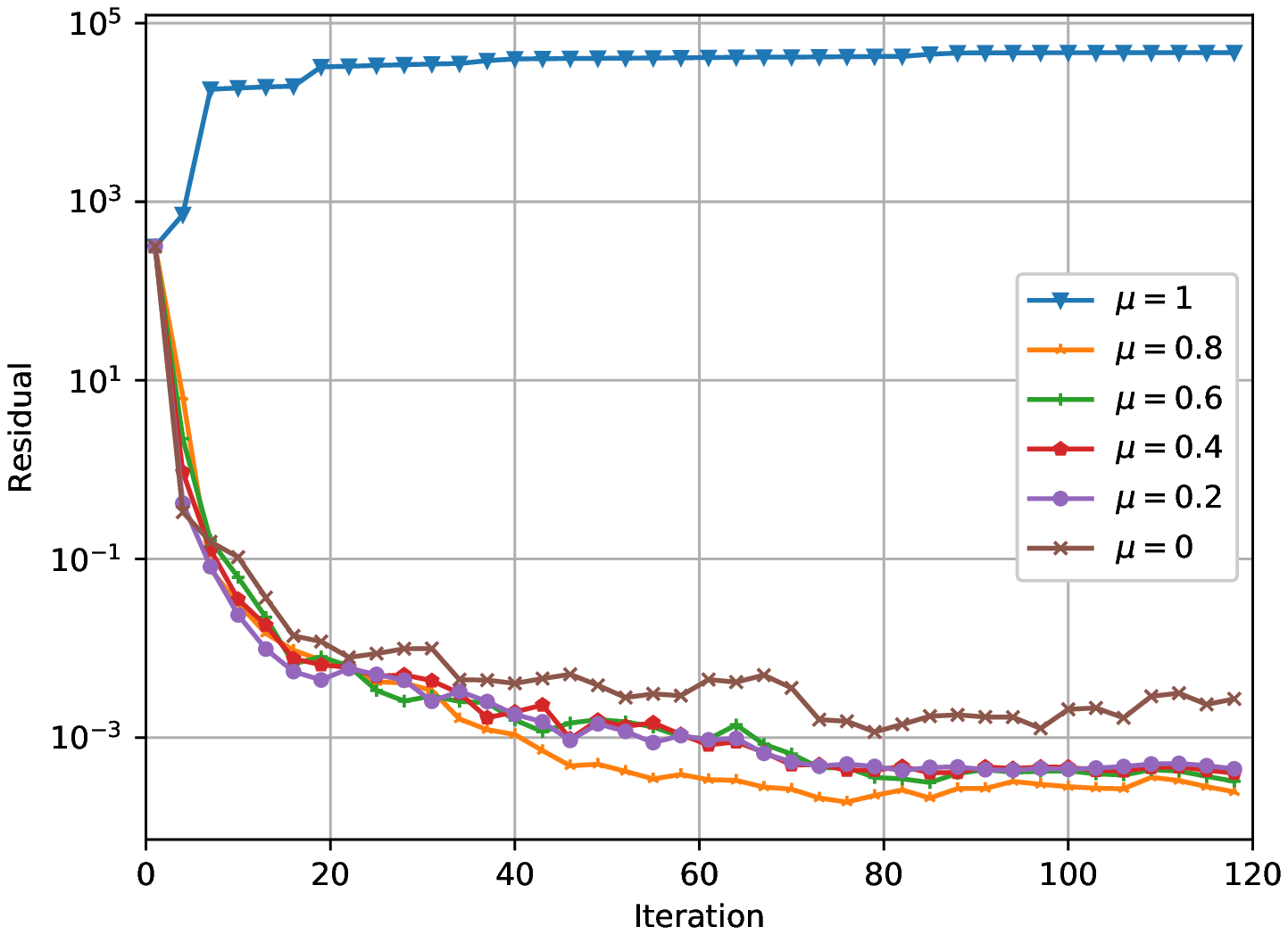}
\end{minipage}
}%
\subfigure[Test 3 (Train step: 10)]{
\begin{minipage}[t]{0.32\linewidth}
\centering
\includegraphics[width=\linewidth]{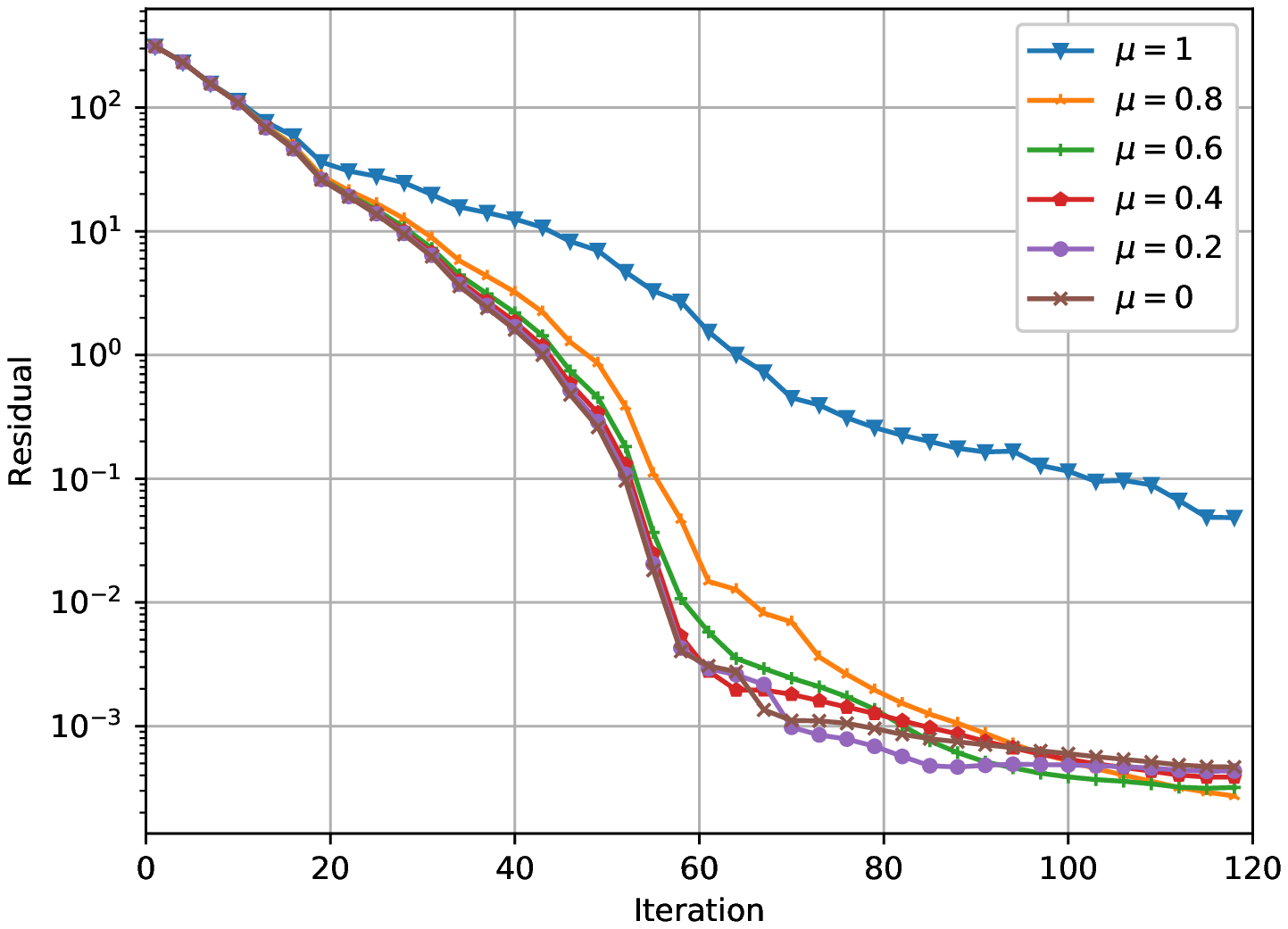}
\end{minipage}%
}%
\subfigure[Test 3 (Train step: 200)]{
\begin{minipage}[t]{0.32\linewidth}
\centering
\includegraphics[width=\linewidth]{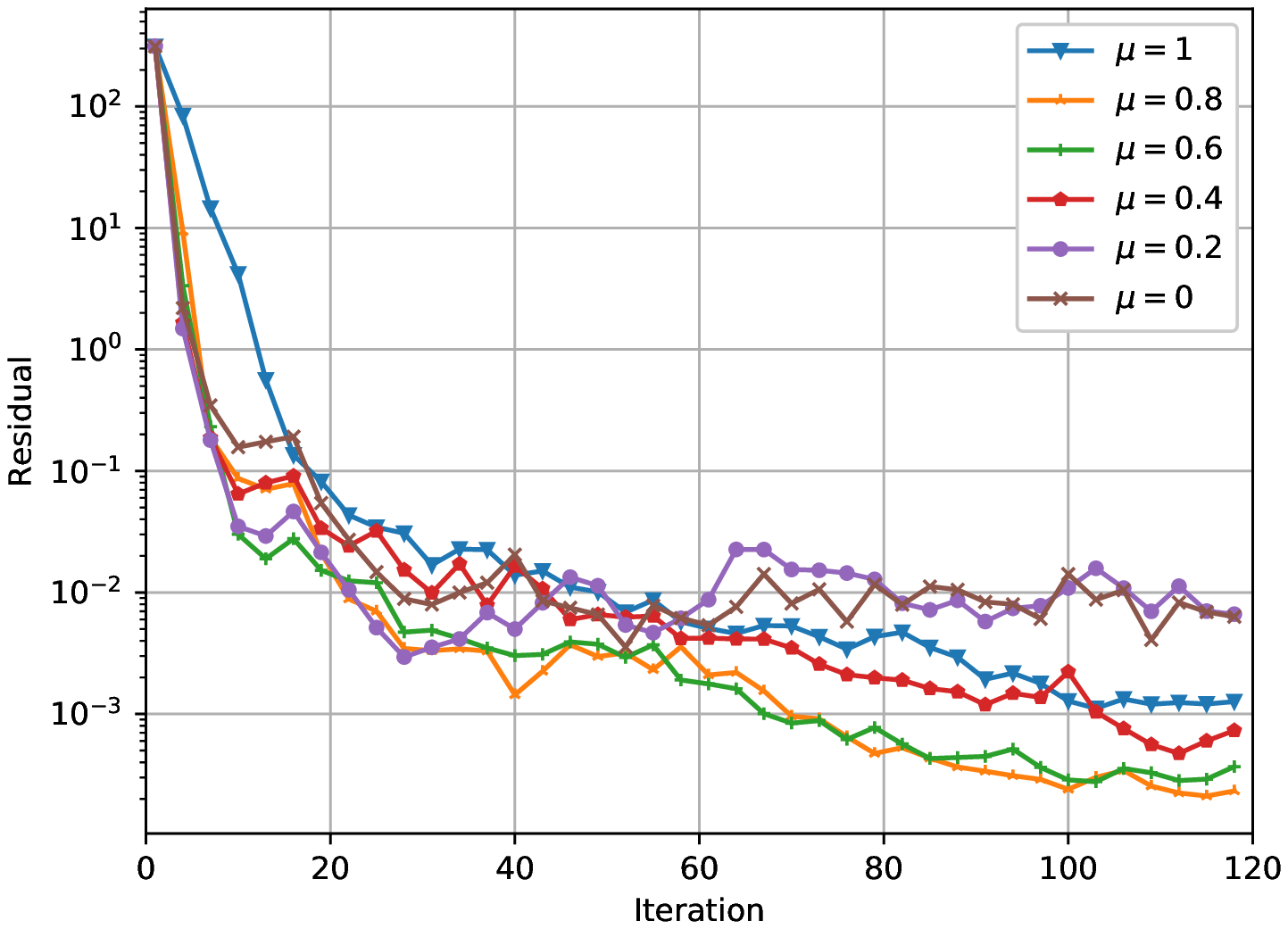}
\end{minipage}%
}%
\caption{Error (HJB residual) vs policy iteration for various $\mu$. The trajectory number is 5.}
\label{fig:lqr_train}
\end{figure*}

 Fig. \ref{fig:lqr_train} shows   the convergence  of the policy iteration at  different $\mu$ values.
 We see that again for all three tests,  $\mu=1$ which corresponds to
 the policy iteration with only the value function
 gives the slowest decay of error among all $\mu$ tests.
 For other values of $\mu<1$,  the performances of reducing the error 
 are basically similar and all outperform the case of $\mu=1$.

 In summary, for the toy model of  linear-quadratic problem, 
 we have conducted many numerical tests
 to show the advantage of our formulation
 of using the value-gradient data in training the 
 value function:
 it improves the convergence of the policy iteration
 and shows much better robustness
 for a limited amount of data and a
 limited number of training steps.

\subsection{Cart-pole balancing}
Cart-pole balancing task is a 4-dim nonlinear case \cite{Barto1983}.
The physical model of this task includes a car, a pole and a ball. The ball is connected to one end of the pole and the other end of the pole is fixed to the car. The pole can rotate around the end fixed to the car, while the car is put on a flat surface, being able to  move left or right. The aim of this task is to balance the pole in the upright vertical direction.

The state variable has four dimensions: the angular velocity of the ball, denoted by $\omega$; the included angle of the pole and the vertical direction, denoted by $\psi\in[-\pi,\pi]$; the velocity of the car, denoted by $v$; the position of the car, denoted by $z$. The control of this problem is the force applied to the car, denoted by $F$.

The control problem is to let $\psi$ be as small as possible. To eliminate the translation
invariant in the horizontal position, we also want  $z$ to be small. 
So we aim to  minimize $-\cos(\psi)$ and $|z|^2$ with the following cost
function 
$$J(u)=\int_0^\infty  e^{-\rho t}\left(-\cos (\psi(t))+\eta |z(t)|^2\right) \text{d}t$$
with $\rho=5$ and $\eta=0.2$ and 
subject to the dynamical system
$$\left\{
\begin{aligned}
 &\dot{\omega} =\frac{g \sin \psi+\frac{\left(\mu_{c} \operatorname{sgn}(v)-F-m l \omega^{2} \sin \psi\right) \cos \psi}{m+m_{c}}-\frac{\mu_{p} \omega}{m l}}{l\left(\frac{4}{3}-\frac{m}{m+m_{c}} \cos ^{2} \psi\right)} \\ &\dot{\psi} =\omega  \\&\dot{v} =\frac{F+m l\left(\omega^{2} \sin \psi-\dot{\omega} \cos \psi\right)-\mu_{c} \operatorname{sgn}(v)}{m+m_{c}} \\  &\dot{z}=v\\
 &\omega(0)=\omega_0,\psi(0)=\psi_0,v(0)=v_0,z(0)=z_0
\end{aligned}\right.
$$where $m$ is the mass of the ball, $m_c$ is the mass of the car, $l$ is the length of the pole, $g$ is the gravitational constant. A constraint is imposed to the control: $|F|\leq F_{max}$, $F_{max}\geq 0$ is the largest control we can have. These hyper-parameters are set to be
\begin{equation*}m=0.1,\ l=0.5,\ m_c=1,\ \mu_c=5\times 10^{-4},\ \mu_p=2\ \times 10^{-6},\ F_{max}=10. \end{equation*}

The state variable is $x=(\omega,\psi,v,z)\in \Real^4$ with $\psi\in [-\pi, \pi)$. 
The value function is approximated by neural network with radial basis function ($n=50$ modes). Totally, there are $(2d+1)n=450$ parameters to learn.
We compute the value function on the domain  $\Omega=[-2\pi,2\pi]\times[-\pi,\pi]\times[-0.5,0.5]\times[-2.4,2.4]$. So the initial values of the characteristics $X_0^{(n)}$ are uniformly
sampled from $\Omega$. But the characteristics are computed in the whole space with sufficiently long time
until $e^{-\rho t}\Phi(X(t))$ and  $e^{-\rho t}\lambda(X(t))$ are both sufficiently small. Error of the numerical solution  $\widehat{\Phi}_{\theta}$  is  measured by the HJE residual calculated on  
 $N_p=10000$ points uniformly sampled  from $\Omega$:
\begin{equation}
\label{eq:DQ}
\begin{split}
error=\frac{1}{N_p}\sum^{N_p}_{j=1}
\Big\|
\rho \hat{\Phi}_{\theta}\left(x^{(j)}\right)
-g&\left(x^{(j)}, a^*\left(x^{(j)}\right)\right) \cdot \nabla \hat{\Phi}_{\theta}\left(x^{(j)}\right) -l\left(x^{(j)}, a^*\left(x^{(j)}\right)\right)\Big\|,
\end{split}
\end{equation}where $x^{(j)}$ is the $j$-th data point.
 
 To better evaluate the performance, we introdue the "successful roll-up": in a 20 second simulation ($T=20$), if
\begin{itemize}
\item $|\psi(t)|<\pi/4$ lasts for at least 10 seconds;
\item  $|z(t)|<10$ for all $t\in[0,T]$.
\end{itemize}
Then we call this run a "successful roll-up".

  The initial condition for measuring the   successful roll-up numbers
   are $(\omega(0),\psi(0),\ \ $ $v(0)=0,z(0)=0)$ with  100 pairs of $(\omega(0),\psi(0))$ from the $10\times 10$ mesh grid of $[-2\pi,2\pi)\times[-\pi,\pi).$

We conduct the same two experiments as in the Linear-quadratic problem for this case which test the performance under insufficient data or incomplete training.

 {\bf Experiment 1}.
In Experiment 1, we study how insufficient amount of  
characteristics data will affect the performance.
Specifically, we test the performance of trajectory numbers of 2, 5 and 10 while the training for the supervised learning to minimize the loss $L(\theta)$ takes a fixed number of 50 ADAM steps. Fewer trajectories mean less amount of labelled data for the method of characteristics.

\begin{table}[htbp!]
\scriptsize
\centering
\begin{tabular}{m{1.5cm}<{\centering}|m{1.22cm}<{\centering}|m{1.1cm}<{\centering}m{1.1cm}<{\centering}m{1.1cm}<{\centering}}
\hline\hline\multirow{2}{1.2cm}{}&  \multirow{2}{1.2cm}{\centering $\mu$} & \multicolumn{3}{c}{Number of trajectories} \\
\cline{3-5}
& & 2 & 5 & 10 \\
\hline 
\multirow{6}{1.5cm}{\centering Residual}&$1.0$ & \textit{2.088} &  \textit{0.844} & \textit{0.934} \\
&$0.8$ & 0.696 & 0.281 & 0.100 \\
&$0.6$ & 0.450 & 0.169 & 0.117 \\
&$0.4$ & 0.441 & 0.147 & 0.091 \\
&$0.2$ & 0.181 & \textbf{0.113} & \textbf{0.082} \\
&$0.0$ & \textbf{0.166} & 0.124 & 0.094 \\
\hline\hline
\multirow{6}{1.5cm}{\centering Successful roll-up}&$1.0$ & 14.85 & \textit{19.25} & \textit{12.30} \\
&$0.8$ & 10.65 & 25.10 & \textbf{60.8} \\
&$0.6$ & \textbf{27.10} & 25.15 & 38.65 \\
&$0.4$ & 11.05 & 39.25 & 44.20 \\
&$0.2$ & \textit{9.30} & 40.95 & 50.45 \\
&$0.0$ & 20.95 & \textbf{44.95} & 55.00 \\
\hline\hline
\end{tabular}
    \caption{ The error (HJE residual) and the number of 
    successful roll-ups for different $\mu$ when  the  trajectory number $N$ 
    changes in the cart-pole balancing task. The train step is 100.}
    \label{tab:car_tranum_residual}
\end{table}

 Table \ref{tab:car_tranum_residual} shows the results when $\mu$ varies
for each test. For each given $N$,
the collection of  $N$ initial states 
are the same at different $\mu$ for consistent comparison.
The average residual error of the last 20 iterations is reported in the table.   For each setting,  the best residual is highlighted  in bold symbols and the worst residual is emphasised in  italics. From the table, we can see that $\mu=1$ performs the worst in all cases. In fact, a huge improvement can be observed in the residual and successful roll-up number when the gradient information is used. Also, this table confirms that with the number of characteristics  increasing, the final accuracy of the numerical value functions always 
gets better and better since more labelled data are provided.

\begin{figure*}[htbp!]
\centering
\subfigure{
\begin{minipage}[t]{0.48\linewidth}
\centering
\includegraphics[width=0.95\textwidth]{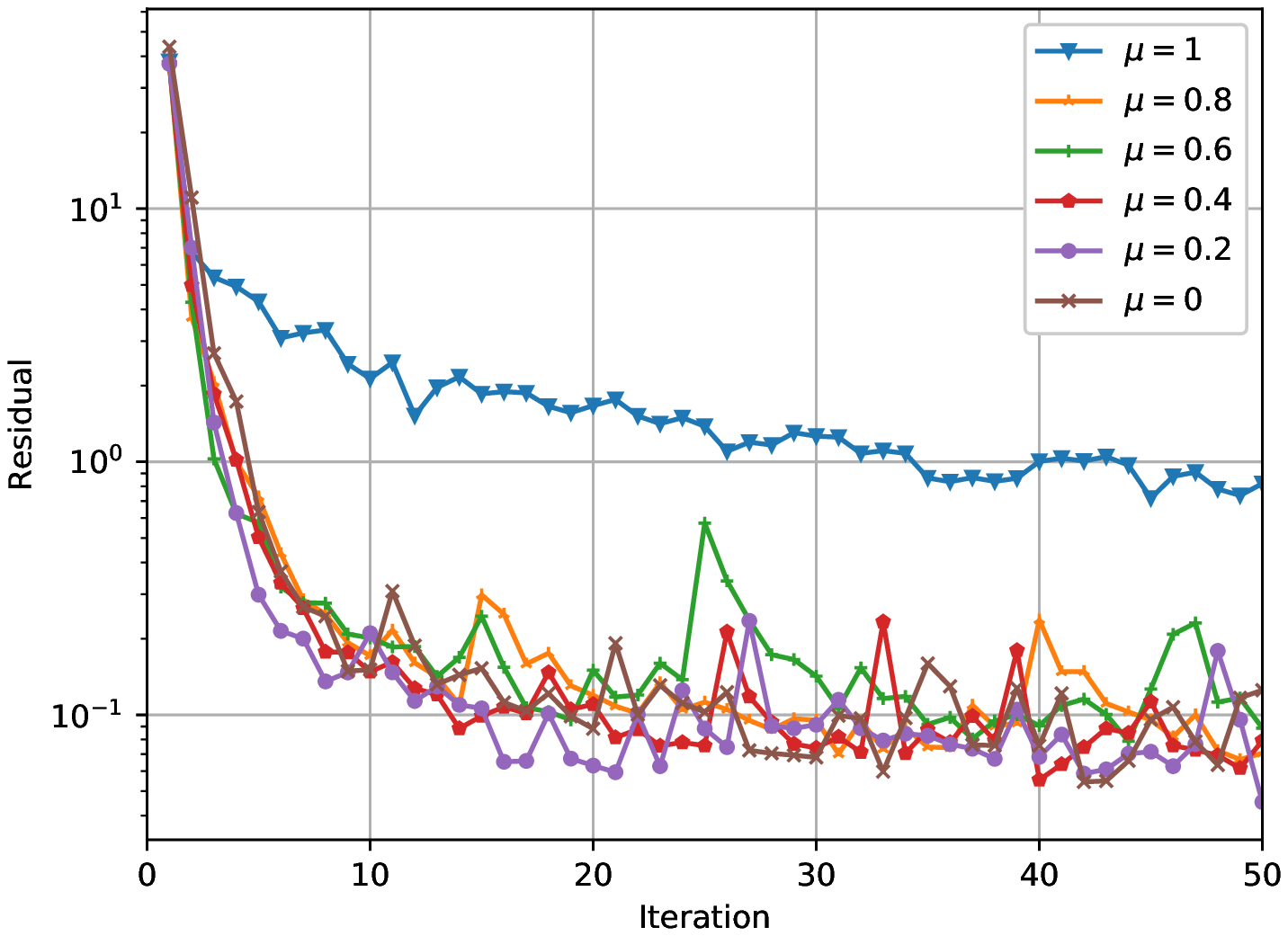}
\end{minipage}
}%
\subfigure{
\begin{minipage}[t]{0.48\linewidth}
\includegraphics[width=0.95\textwidth]{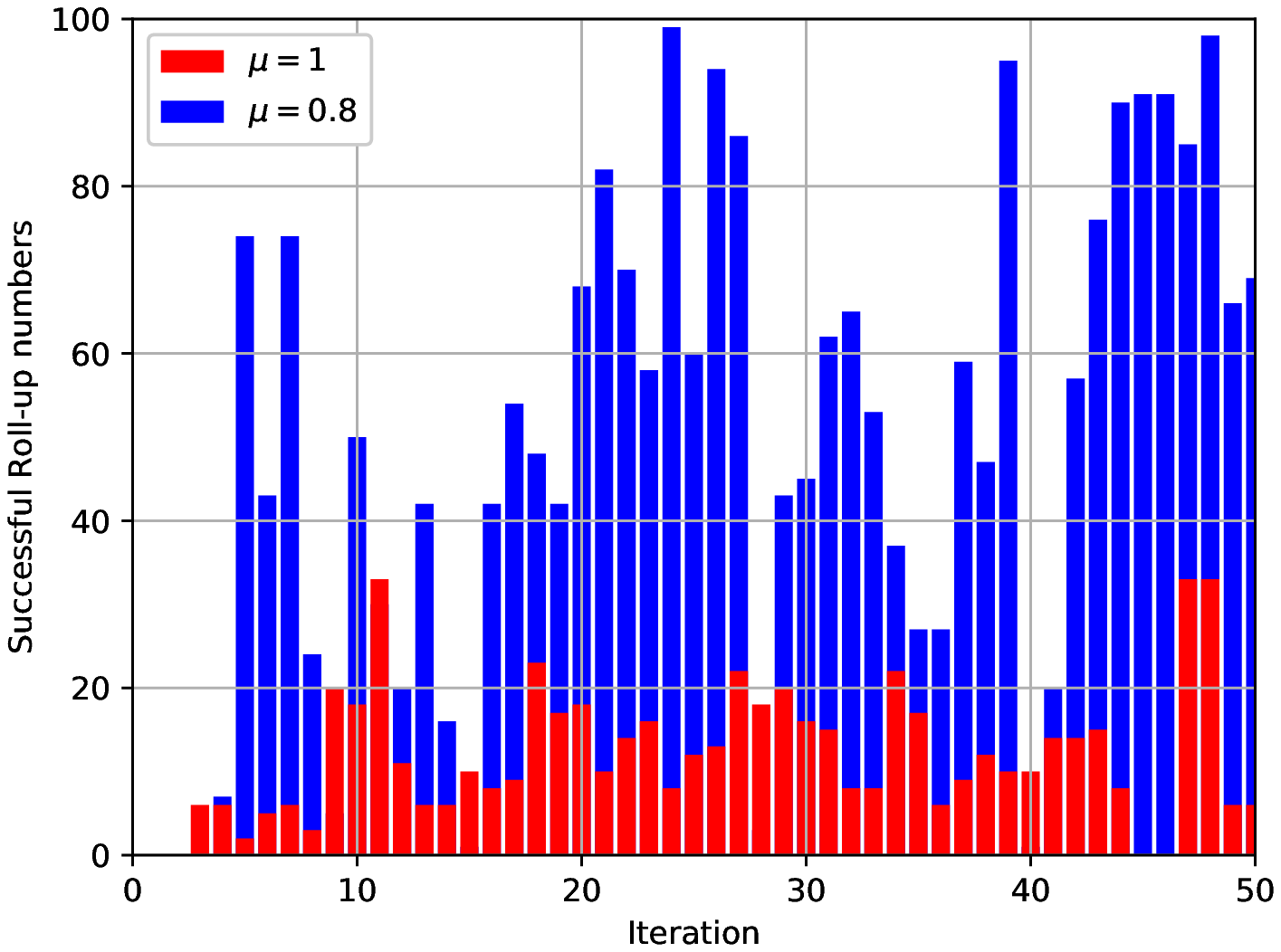}
\end{minipage}%
}%
\caption{Residual and the successful roll-ups with trajectory number of 10 and train step of 100 in 
the cart-pole task.}\label{fig:4dim_tra}
\end{figure*}

To investigate the effect of $\mu$ on  the decay of the error,
we plot the residual error during the policy iteration in Fig. \ref{fig:4dim_tra}. 
 This figure clearly  demonstrates that $\mu=1$ has the slowest 
 convergence  among all $\mu$ we tested,
and we can find that  adding even a small portion of the loss for the value-gradient,
i.e., $\mu<1$, can improve the convergence. Also, we plot the successful roll-up number of $\mu=1$ compared with $\mu=0.8$ at each iteration. It can be seen that value-gradient significantly improves the performance.

 {\bf Experiment 2}.
The purpose of  Experiment 2 is to test the performance of the methods when the training process is not sufficiently long. 
In this experiment,   the train steps   50, 100, 150 and 200 are tested.
A small training step means less accuracy in fitting the value function. 
 The trajectory number is now fixed as 10.

\begin{table}[htbp!]
\scriptsize
\centering
\begin{tabular}{m{1.5cm}<{\centering}|
m{1.22cm}<{\centering}|m{1.1cm}<{\centering}m{1.1cm}<{\centering}m{1.1cm}<{\centering}m{1.1cm}<{\centering}}
\hline\hline \multirow{2}{1.2cm}{}&  \multirow{2}{1.2cm}{\centering $\mu$} & \multicolumn{4}{c}{Train step} \\
\cline{3-6}
& & 50 & 100 & 150 & 200 \\
\hline \multirow{6}{1.2cm}{\centering Residual}&$1.0$ &\textit{1.355} & \textit{0.934} & \textit{0.500} & \textit{0.471} \\
&$0.8$ & 0.260 & 0.100 & 0.151 & 0.155 \\
&$0.6$ & 0.175 & 0.117 & \textbf{0.092} & 0.097 \\
&$0.4$ & \textbf{0.096} & 0.091 & 0.105 & 0.103 \\
&$0.2$ & 0.106 & \textbf{0.082} & 0.094 & \textbf{0.080} \\
&$0.0$ & 0.130 & 0.094 & 0.070 & 0.083 \\
\hline
\hline
\multirow{6}{1.5cm}{\centering Successful roll-up }&$1.0$ & \textit{13.75} & \textit{12.30} & 28.55 & \textit{30.00} \\
&$0.8$ & 58.25 & \textbf{60.80} & 41.45 & 35.80 \\
&$0.6$ & 28.05 & 38.65 & \textit{16.50} & 35.55 \\
&$0.4$ & 56.40 & 44.20 & 36.50 & 35.75 \\
&$0.2$ & \textbf{61.65} & 50.45 & 35.30 & 47.00 \\
&$0.0$ & 21.85 & 55.00 & \textbf{49.30} & \textbf{54.10} \\
\hline\hline
\end{tabular}
\caption{The error (HJE residual) and the number of 
    successful roll-ups  for different $\mu$
    when training steps change in the cart-pole balancing task. The trajectory number is 10.}
    \label{tab:car_trainstep}
\end{table}

As shown in 
Table \ref{tab:car_trainstep}, the accuracy gets quite remarkable improvements
as long as the value-gradient is included  in the formulation.
The successful roll-ups  also show a better performance for $\mu<1$, 
particularly when the number of trajectories increases.

\begin{figure*}[!htbp]
\centering
\subfigure{
\begin{minipage}[t]{0.48\linewidth}
\centering
\includegraphics[width=0.95\textwidth]{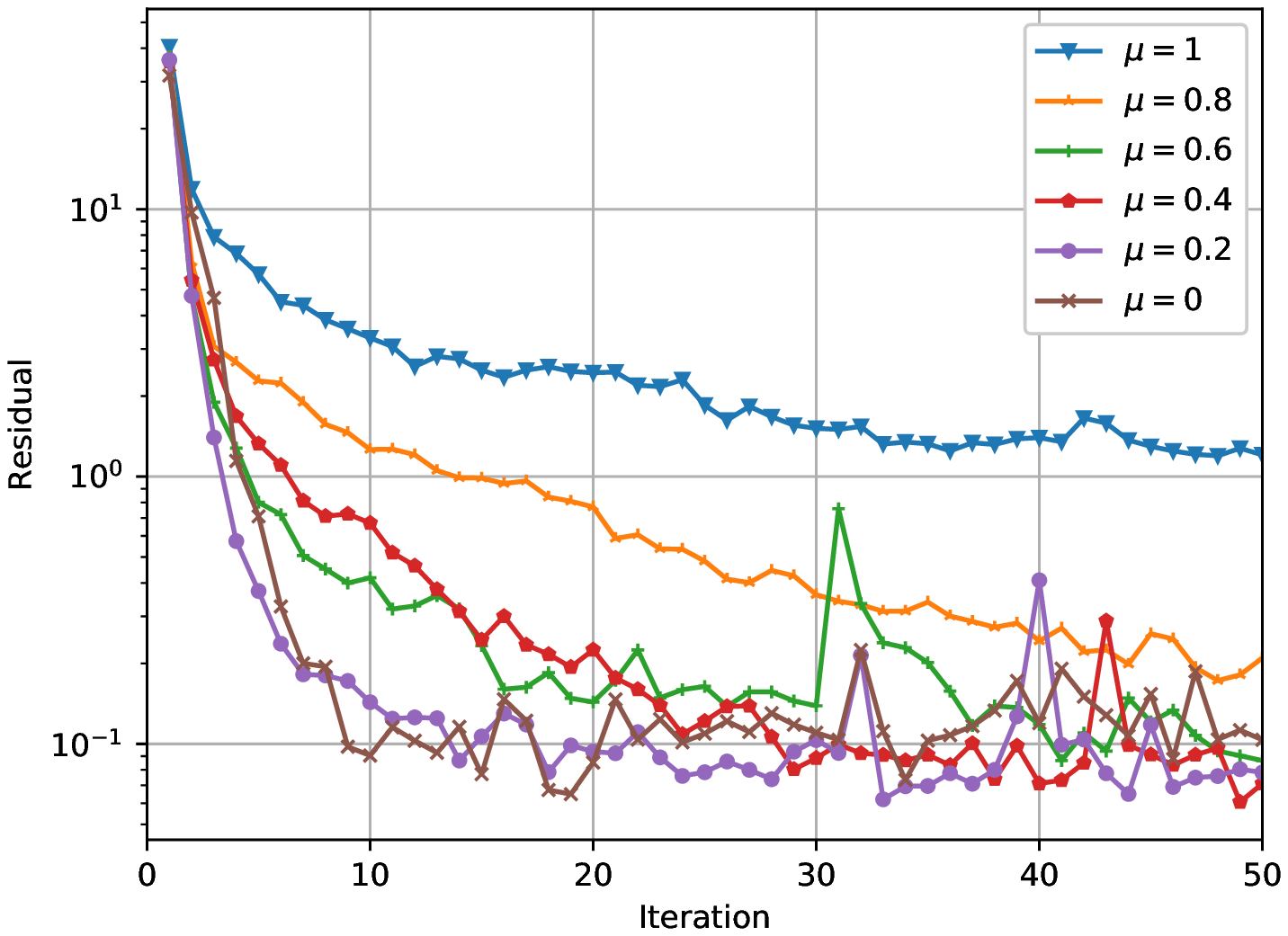}
\end{minipage}%
}%
 \subfigure{
 \begin{minipage}[t]{0.48\linewidth}
 \centering
 \includegraphics[width=0.95\textwidth]{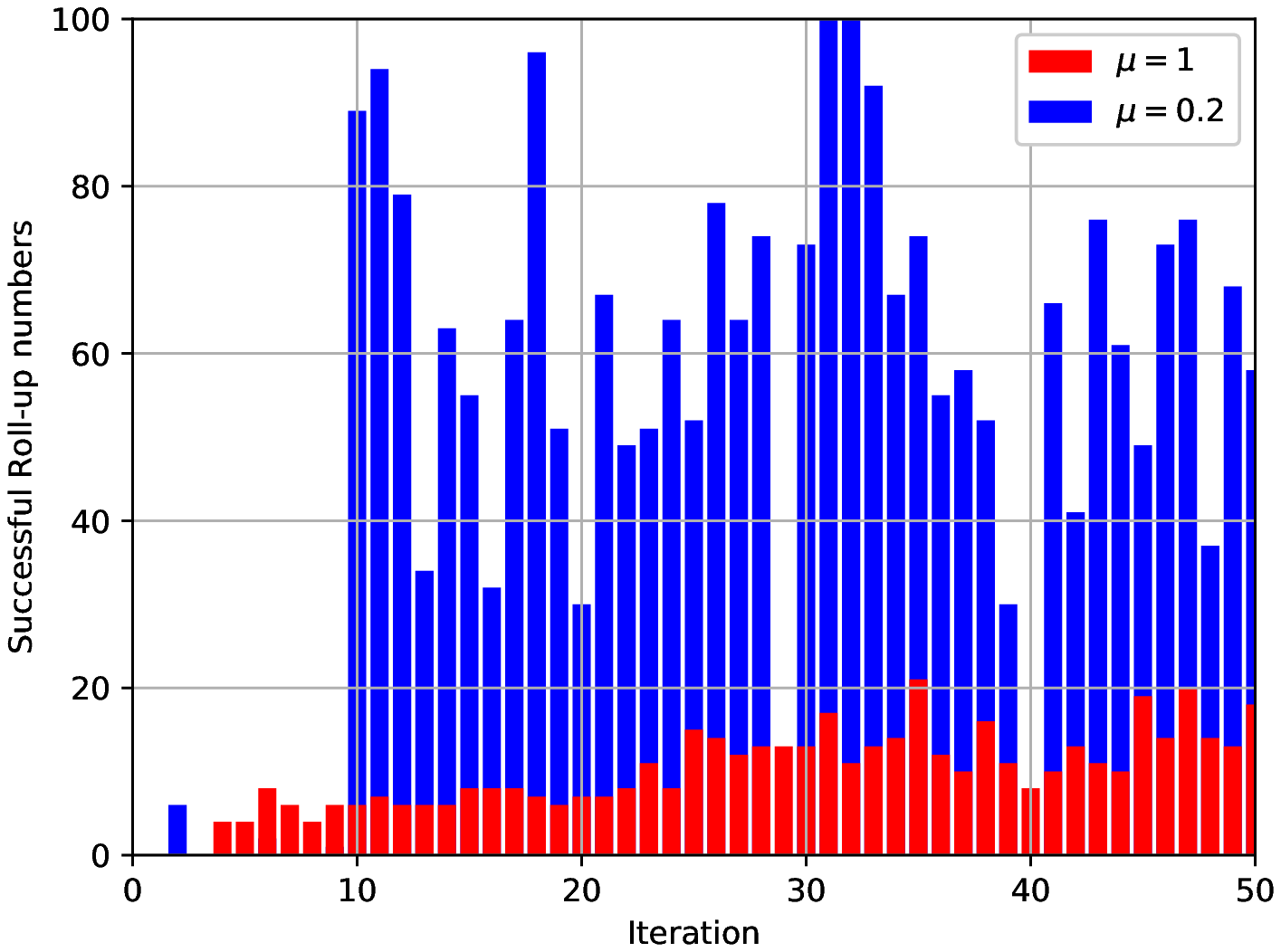}
 \end{minipage}%
 }%
\caption{Residual and the successful roll-ups with trajectory number of 10 and train step of 50 in 
the cart-pole task.}\label{fig:4dim_train}
\end{figure*}

 Fig. \ref{fig:4dim_train}
shows  the residual and successful roll-ups with respect to 
the policy iteration for different $\mu$ values. 
As expected,  choosing $\mu<1$ gets these results considerably improved.

\subsection{Advertising process}
This example is a 3-dim nonlinear case from \cite{Vienna1988,Feichtinger1994}. The three dimensions of the states are the advertising stimulus level $A$, the adaptation level $\overline{A}$ and sales $S$. We are aiming at finding the optimal advertising effort $u$ that maximize the cost function
$$
J(u)=\int_{0}^{\infty}e^{-\rho t}(\pi S(t)-u(t)) \d t,
$$
subject to the dynamic system
$$
\left\{
\begin{aligned}
&\dot{A}=u-\delta A, A(0)=A_{0}\\
&\dot{\overline{A}}=\zeta(A-\overline{A}), \overline{A}(0)=\overline{A}_{0} \\
&\dot{S}=v \ln (A+1)-\alpha S+\overline{w}\max\{0,A-\overline{A}\}, S(0)=S_{0}
\end{aligned}
\right.
$$
where $\delta$ is a constant proportional depreciation rate, $\zeta>0$ represents the relative weight of more rescent levels of advertising capital, $\alpha$ denotes the proportion of customers switching to other brands per unit time, $\pi$ is the gross profit per unit sold and $\overline{w}$ and $v$ are constants. The control $u$ has upper bound and lower bound $0\leq u\leq \overline{u}$. All the hyper-parameters are set to:
$$\overline{u}=2,\delta=0.5,\zeta=1,v=0.5,\alpha=0.1,\overline{w}=0.5,\pi=0.5$$
The value function is parametrized as a family of radial basis functions with 60 modes.
We also conduct two experiments on this problem as previous examples. The total number of policy iteration is 200.

\textbf{Experiment 1}.
In Experiment 1, trajectory numbers of 2, 5, 10 are tested. The training step is fixed to 50. Table \ref{tab:adv_tranum_residual} records the average HJB residual \eqref{eq:DQ} of the last 40 iterations of the 200 policy iterations. As is shown in the table, $\mu=1$ has the worst residual among all. Fig. \ref{fig:3dim_tra} demonstrates the residual with respect to the policy iteration number. $\mu\in[0,1)$ converges faster and performs better than $\mu=1$.

\begin{table}[htbp]
\scriptsize
\centering
\begin{tabular}{m{1.22cm}<{\centering}|m{1.1cm}<{\centering}m{1.1cm}<{\centering}m{1.1cm}<{\centering}}
\hline\hline  \multirow{2}{1.2cm}{\centering $\mu$} & \multicolumn{3}{c}{Number of trajectories} \\
\cline{2-4}
 & 2 & 5 & 10 \\
\hline
$1.0$ & \textit{0.0627} & \textit{0.0429} & \textit{0.0286}\\
$0.8$ & \textbf{0.0373} & 0.0265 &0.0180\\
$0.6$ & 0.0506 & 0.0251 &0.0180\\
$0.4$ & 0.0511 & 0.0205 &0.0193\\
$0.2$ & 0.0447 & 0.0237 &0.0106\\
$0.0$ & 0.0417 & \textbf{0.0107} &\textbf{0.0080}\\
\hline\hline
\end{tabular}
    \caption{The HJB residual for different $\mu$ in advertise process task Experiment 2. 
    Smaller residuals are better results. 
    The best value for each trajectory number is highlighted in bold symbols and the worst
    is marked in italics.
    The training step is 50. }
    \label{tab:adv_tranum_residual}
\end{table}

\begin{figure*}[htbp]
\centering

\subfigure[Trajectory Number: 2]{
\begin{minipage}[t]{0.48\linewidth}
\centering
\includegraphics[width=.95\textwidth]{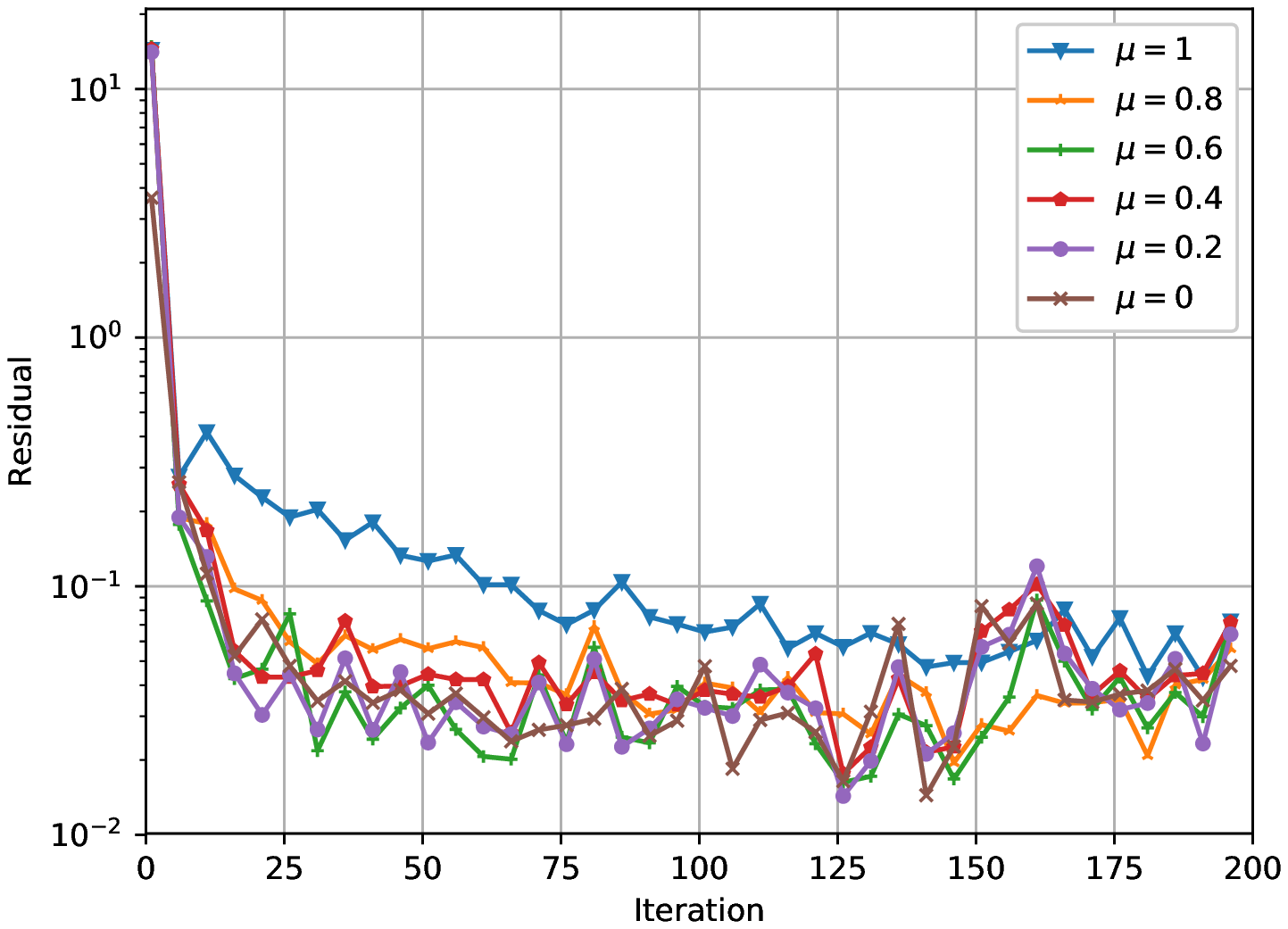}
\end{minipage}%
}%
\subfigure[Trajectory Number: 10]{
\begin{minipage}[t]{0.48\linewidth}
\centering
\includegraphics[width=0.95\textwidth]{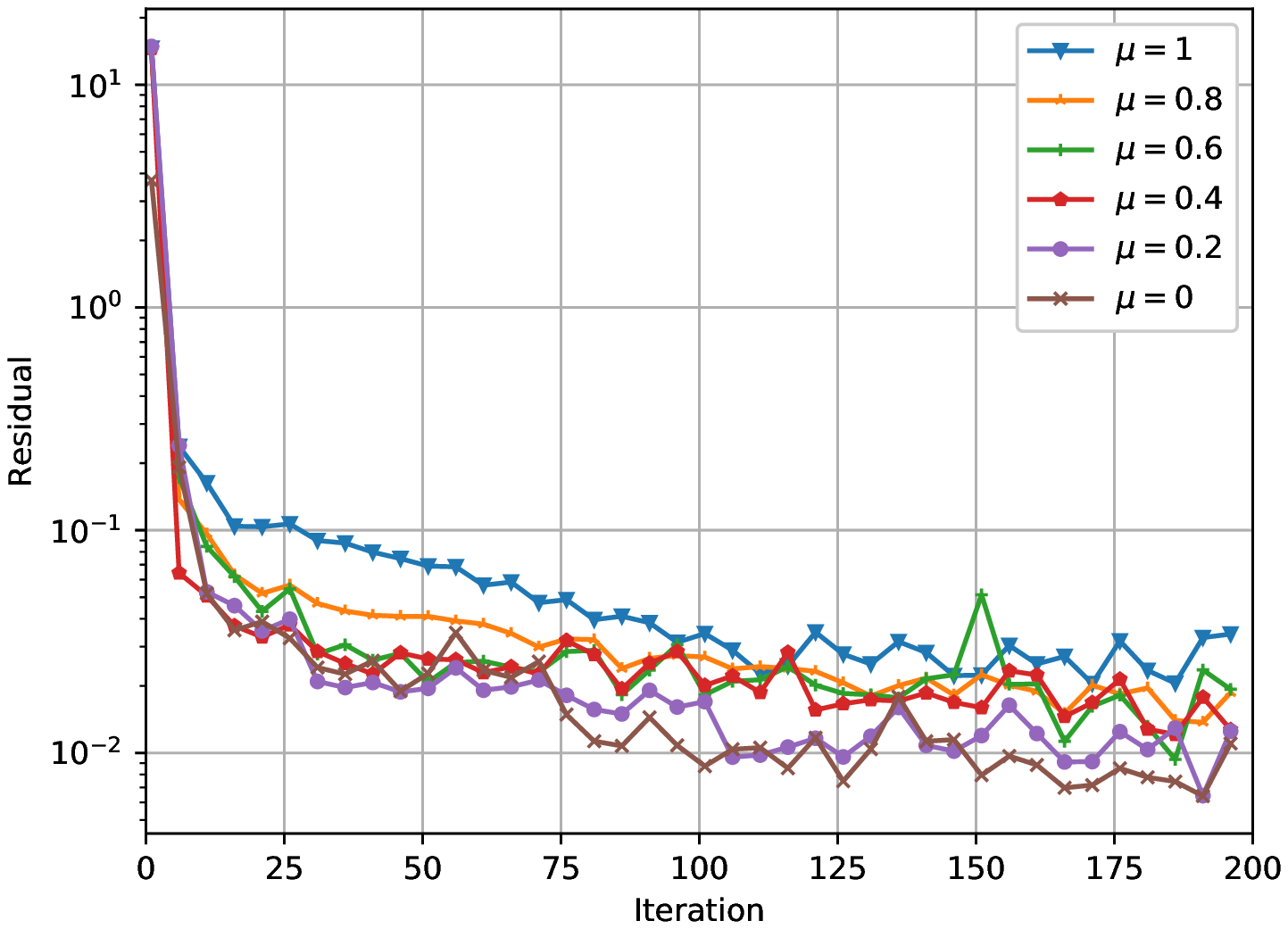}
\end{minipage}
}%

\caption{Residual during the 200 policy iteration 
for various $\mu$ in  the advertise process task. The trajectory numbers are 2 and 10 and the training step is 50. 
}\label{fig:3dim_tra}
\end{figure*}

 {\bf Experiment 2}.
In Experiment 2, we test the training step of 25, 50, 75, 100.  Table \ref{tab:adv_trainstep_residual} records the residual error and Fig. \ref{fig:3dim_train} demonstrates the residual with respect to the policy iteration number. It can be concluded that using a mixture of value and value gradient works better than using value only.  

\begin{table}[htbp]
\scriptsize
\centering
\begin{tabular}{m{1.22cm}<{\centering}|m{1.1cm}<{\centering}m{1.1cm}<{\centering}m{1.1cm}<{\centering}m{1.1cm}<{\centering}}
\hline\hline  \multirow{2}{1.2cm}{\centering $\mu$} & \multicolumn{4}{c}{Number of train steps} \\
\cline{2-5}
 & 25 & 50 & 75 & 100 \\
\hline
$1.0$ & \textit{0.04098} & \textit{0.0429} & \textit{0.03651} & \textit{0.03772}\\
$0.8$ & 0.03134 & 0.0265 & 0.02986 & 0.02800\\
$0.6$ & 0.02844 & 0.0251 & 0.02578 & 0.02731\\
$0.4$ & 0.02149 & 0.0205 & 0.02107 & 0.03110\\
$0.2$ & 0.02111 & 0.0237 & 0.01160 & 0.03220\\
$0.0$ & \textbf{0.01632} & \textbf{0.0107} & \textbf{0.01101} & \textbf{0.02522}\\
\hline\hline
\end{tabular}
    \caption{The HJB residual for different $\mu$ in advertise process task Experiment 2. 
    Smaller residuals are better results. 
    The best value for each train step selection is highlighted in bold symbols and the worst
    is marked in italics.
    The trajectory number is 5. }
    \label{tab:adv_trainstep_residual}
\end{table}

\begin{figure*}
\centering
\subfigure[Train step: 25]{
\begin{minipage}[t]{0.48\linewidth}
\centering
\includegraphics[width=0.95\textwidth]{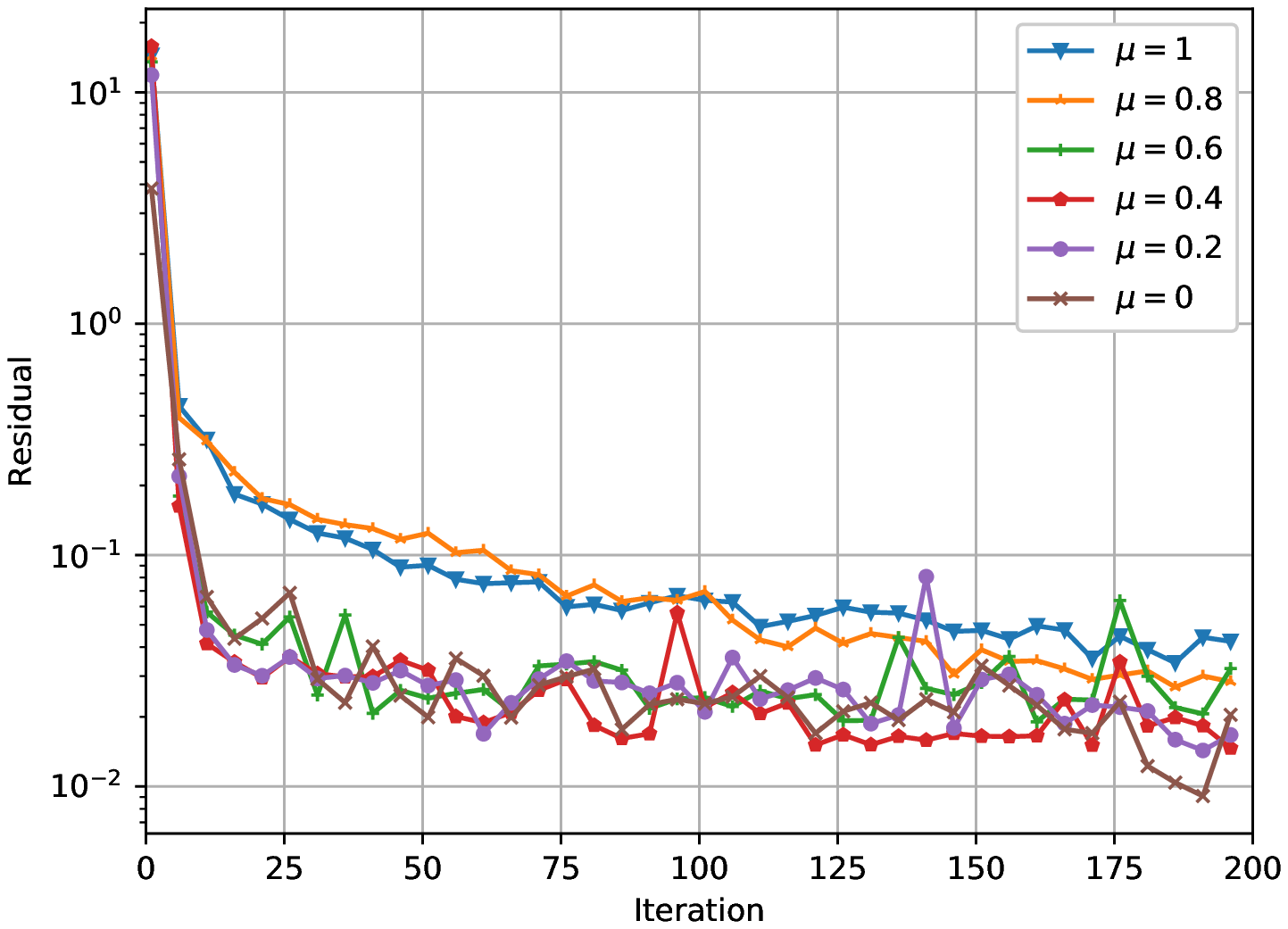}
\end{minipage}%
}%
\subfigure[Train step: 75]{
\begin{minipage}[t]{0.48\linewidth}
\centering
\includegraphics[width=0.95\textwidth]{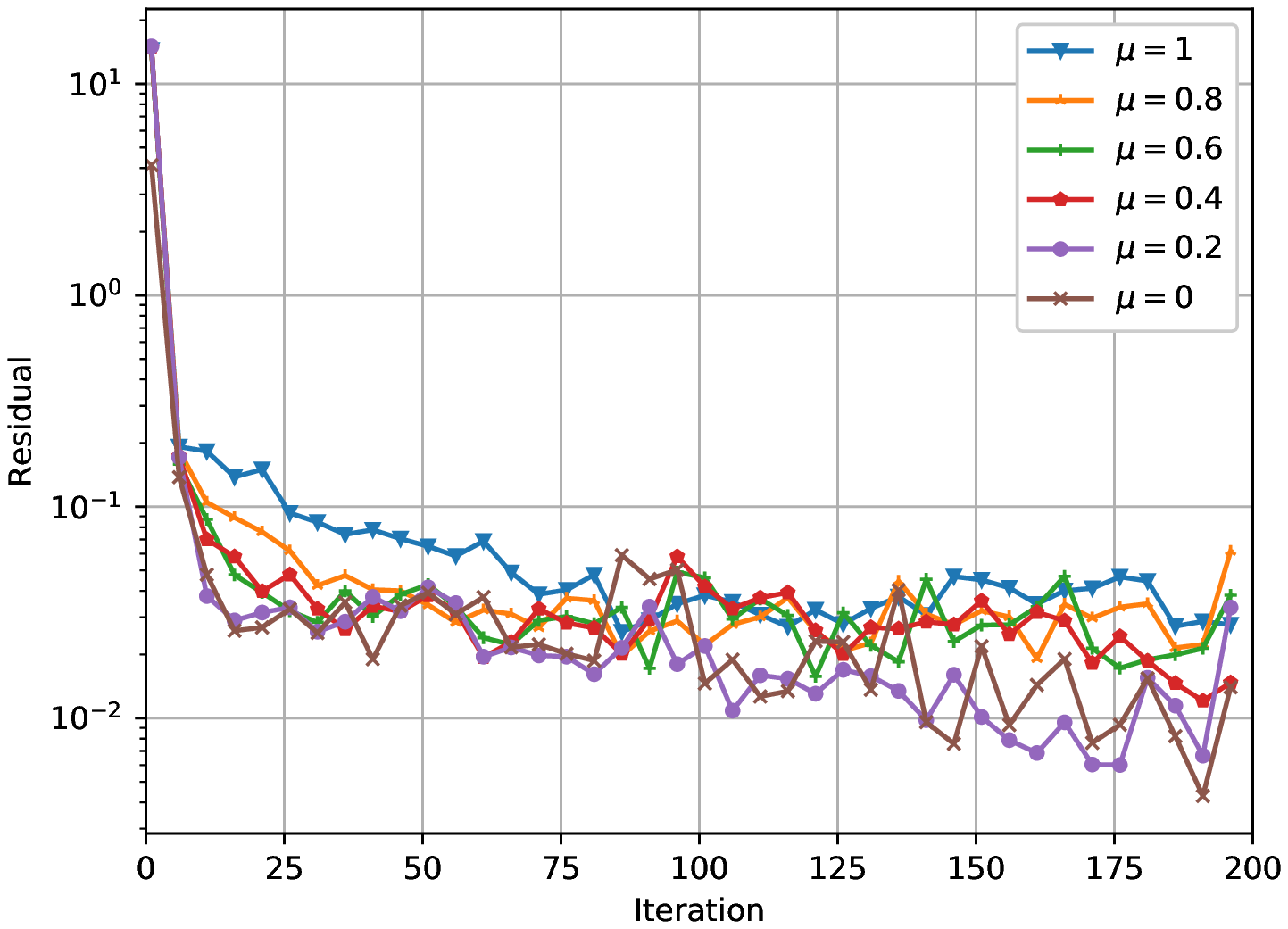}
\end{minipage}
}%

\caption{Residual during the policy iteration with training steps of 25 and 75 in advertise process task. The trajectory number is 5.
}
\label{fig:3dim_train}
\end{figure*}

To conclude the above experiment,
we have performed the numerical tests by changing the amount of characteristics data
and the training steps, which are two important factors in practical computation.
 By comparing the performance measured by the HJE residual as the error and the successful roll-ups
 as the robustness,
 we find that  these numerical results   consistently show 
the outperformance when using the characteristics data both from the value
and the value-gradient functions.
Although  the four tested values of $\mu=0.2,0.4,0.6,0.8$ between $0$ and $1$
always beat the traditional method at $\mu=1$, 
the optimal value $\mu$ actually varies on the specific settings
and the  difference among these four values for 
the performance is marginal.


\section{Conclusion}
\label{sec:con}
Based on the system of PDEs for the value-gradient functions we derived in this paper,
we develop a new policy iteration framework, called {\bf PI-lambda}, for the numerical solution of the value function for the optimal control problems.
We show the convergence property of this iterative scheme under \cref{ass:A1}.
The system  of PDEs for the value-gradient functions  $\lambda(x)$
is closed since it does not involve the value function $\Phi(x)$ at all,
so one could in principle  use neural networks only for $\lambda$.
This is  distinctive from many existing methods based on 
  value function (e.g. \cite{Han8505}).
The system for    $\lambda$
is also essentially decoupled and  shares the same characteristics ODE with the generalized HJE.
By simulating     characteristics curves in parallel for the state variable   by any classic ODE solver
(like Runge-Kutta method),   both the value $\Phi$ and the value-gradient functions $\lambda$ on  each characteristics curve can be computed.
Equipped with any state-of-the-art function representation technique 
and the large-scale minimization techniques from supervised learning,
 these labelled data can be generalized to the whole space to deal with high dimensional problems.
Policy iteration has the computational convenience to 
simulate the characteristics equations only forward in time, instead of solving any boundary-value problem
for {\it optimal} trajectories directly as in  \cite{Izzo01145,Kang2017,Kang2021QRnet}.
 Policy iteration is also convenient when Hamiltonian minimization 
 has no analytical expression.
The learning procedure of   supervised learning in our method is not new,
and it has been applied, for example in \cite{TailorIzzo2019,Izzo01145,Kang2019},
 to  combine the losses from  the policy data, the value function data and the value-gradient data altogether.  
 Our distinction  from these works  is to 
formulate   the co-state variable as the gradient function $\lambda(x)$ of the state,
not a function of the time $\lambda(t)$ in PMP.

The generalization to the finite horizon control problem on $[0,T]$ is straightfoward: to replace $\rho\lambda(x)$ by $-\partial_t \lambda(t,x)$
in equation \eqref{eq:lambda} and  add  the transversality condition $\lambda(T,x)=\nabla_x h(T,x)$ 
when there is a   terminal cost $h(T,x(T))$. The main algorithm in this paper based on the policy iteration,  {\bf PI-lambda}, is still applicable and our main theorem   (Theorem \ref{thm: main}) can be easily generalized.

Some practical computational issues which are not fully discussed here include 
the choice of the initial policy $a^{(0)}$, the number of trajectories $N$ and  their initial locations $\set{X_0}$.
For the initial policy,  it should be chosen conservatively to stabilize the dynamics. For the characteristics curves,  
$N$ may be changed from iteration to iteration,  and  adaptive sampling 
for the initial states is a good issue for further exploration \cite{Kang2019}. 
If a neural network is used, the network structure is also an important practical issue \cite{Kang2021QRnet}.

An obvious  question to address in future is how to formulate the equations of $\lambda$ for the stochastic optimal control so as to leverage the similar  benefit of our algorithm here for the deterministic  control problem. One may consider the splitting method in \cite{Bensoussan1992}.
 
\section*{Acknowledgment}
We thank Dr Bohan Li and Dr Yiqun Li for offering advice to the theorem proof. Alain Bensoussan acknowledges the financial support from the National Science Foundation under
grant  DMS-1905449 and grant HKSAR-GRF 14301321.  Jiayue Han acknowledges the support of UGC for PhD candidates.
  Phillip Yam acknowledges the financial supports from HKGRF-14300717 with the project title “\textit{New kinds of Forward-backward Stochastic Systems with Applications}”, HKGRF-14300319 with the project title “\textit{Shape-constrained Inference: Testing for Monotonicity}”, HKGRF-14301321 with the project title “\textit{General Theory for Infinite Dimensional Stochastic Control:
Mean Field and Some Classical Problems}”  and Direct Grant for Research 2014/15 (Project No.\ 4053141) offered by CUHK. 
Xiang Zhou acknowledges the support of Hong Kong RGC GRF grant 11305318.

\bibliographystyle{siamplain}
\bibliography{./ctrl}

\begin{thebibliography}{10}

\bibitem{Kalise2015SISC}
{\sc A.~Alla, M.~Falcone, and D.~Kalise}, {\em An efficient policy iteration
  algorithm for dynamic programming equations}, SIAM Journal on Scientific
  Computing, 37 (2015), pp.~A181--A200,
  \url{https://doi.org/10.1137/130932284}.

\bibitem{Barto1983}
{\sc S.~Barto}, {\em {Neuronlike adaptive elements that can solve difficult
  learning control problems}}, IEEE Transactions on Systems, Man, and
  Cybernetics, 13 (1983), pp.~834--846.

\bibitem{Bea1998}
{\sc R.~W. Bea}, {\em Successive {G}alerkin approximation algorithms for
  nonlinear optimal and robust control}, International Journal of Control, 71
  (1998), pp.~717--743, \url{https://doi.org/10.1080/002071798221542}.

\bibitem{BEARD19972159}
{\sc R.~W. Beard, G.~N. Saridis, and J.~T. Wen}, {\em {Galerkin approximations
  of the generalized {Hamilton-Jacobi-Bellman} equation}}, Automatica, 33
  (1997), pp.~2159--2177, \url{https://doi.org/10.1016/S0005-1098(97)00128-3}.

\bibitem{Beard:1998ti}
{\sc R.~W. Beard, G.~N. Saridis, and J.~T. Wen}, {\em Approximate solutions to
  the time-invariant {Hamilton{\textendash}Jacobi{\textendash}Bellman}
  equation}, Journal of Optimization Theory and Applications, 96 (1998),
  pp.~589--626.

\bibitem{Bellman1957}
{\sc R.~Bellman}, {\em {A Markovian Decision Process}}, Indiana University
  Mathematics Journal, 6 (1957), pp.~679--684,
  \url{https://doi.org/10.1512/iumj.1957.6.56038}.

\bibitem{Bellman1957Book}
{\sc R.~Bellman}, {\em {Dynamic Programming}}, Princeton University Press,
  1957.

\bibitem{Bensoussan1992}
{\sc A.~Bensoussan}, {\em Splitting up method in the context of stochastic
  {PDE}}, in Stochastic Partial Differential Equations and Their Applications,
  B.~L. Rozovskii and R.~B. Sowers, eds., Berlin, Heidelberg, 1992, Springer
  Berlin Heidelberg, pp.~22--31.

\bibitem{bensoussan2018estimation}
{\sc A.~Bensoussan}, {\em {Estimation and Control of Dynamical Systems}},
  Interdisciplinary Applied Mathematics, Springer International Publishing,
  2018.

\bibitem{2020handbookXZ}
{\sc A.~{Bensoussan}, Y.~{Li}, D.~{Phan Cao Nguyen}, M.-B. {Tran}, S.~C.~P.
  {Yam}, and X.~{Zhou}}, {\em {Machine Learning and Control Theory}}, to apper
  in NUMERICAL CONTROL: PART B, volume 24 of Handbook of Numerical Analysis,
  Elsever,  (2020), \url{https://arxiv.org/abs/2006.05604}.

\bibitem{Bertsekas2001}
{\sc D.~P. Bertsekas}, {\em Dynamic Programming and Optimal Control, Vol. I,
  2nd Ed.}, Athena Scientific, Belmont, MA, 2001.

\bibitem{Bertsekas2019}
{\sc D.~P. Bertsekas}, {\em Reinforcement Learning and Optimal Control}, Athena
  Scientific, Belmont, MA, 2019.

\bibitem{ChowOsher2017JSC}
{\sc Y.~T. Chow, J.~Darbon, S.~Osher, and W.~Yin}, {\em Algorithm for
  overcoming the curse of dimensionality for time-dependent non-convex
  hamilton--jacobi equations arising from optimal control and differential
  games problems}, Journal of Scientific Computing, 73 (2017), pp.~617--643,
  \url{https://doi.org/10.1007/s10915-017-0436-5}.

\bibitem{ChowOsherADMM2018}
{\sc Y.~T. Chow, J.~Darbon, S.~Osher, and W.~Yin}, {\em Algorithm for
  overcoming the curse of dimensionality for certain non-convex
  {Hamilton--Jacobi} equations, projections and differential games}, Annals of
  Mathematical Sciences and Applications, 3 (2018), pp.~369--403.

\bibitem{ChowOsher2019JSC}
{\sc Y.~T. Chow, W.~Li, S.~Osher, and W.~Yin}, {\em Algorithm for
  {Hamilton--Jacobi} equations in density space via a generalized {Hopf}
  formula}, Journal of Scientific Computing, 80 (2019), pp.~1195--1239.

\bibitem{DarbonOsher2016RMS}
{\sc J.~Darbon and S.~Osher}, {\em Algorithms for overcoming the curse of
  dimensionality for certain {Hamilton--Jacobi} equations arising in control
  theory and elsewhere}, Research in the Mathematical Sciences, 3 (2016),
  pp.~1--26.

\bibitem{2020HighPDEReivewE}
{\sc W.~E, J.~Han, and A.~Jentzen}, {\em Algorithms for solving high
  dimensional {PDEs}: From nonlinear {M}onte {C}arlo to machine learning},
  arXiv preprint arXiv:2008.13333,  (2020).

\bibitem{falcone2013semi}
{\sc M.~Falcone and R.~Ferretti}, {\em Semi-Lagrangian approximation schemes
  for linear and {Hamilton—Jacobi} equations}, SIAM, 2013.

\bibitem{Feichtinger1994}
{\sc G.~Feichtinger, R.~F. Hartl, S.~P. Sethi, G.~Feichtinger, R.~F. Hartl, and
  S.~P. Sethi}, {\em {Dynamic Optimal Control Models in Advertising : Recent
  Developments Linked references are available on JSTOR for this article :
  Dynamic Optimal Control Models in Advertising : Recent Developments}}, 40
  (1994), pp.~195--226.

\bibitem{fleming2006controlled}
{\sc W.~Fleming and H.~Soner}, {\em Controlled Markov Processes and Viscosity
  Solutions}, Stochastic Modelling and Applied Probability, Springer New York,
  2006.

\bibitem{Fleming1975Book}
{\sc W.~H. Fleming and R.~W. Rishel}, {\em {Deterministic and Stochastic
  Optimal Control}}, Stochastic Modelling and Applied Probability, Springer New
  York, 1975, \url{https://doi.org/10.1007/978-1-4612-6380-7}.

\bibitem{Han8505}
{\sc J.~Han, A.~Jentzen, and W.~E}, {\em Solving high-dimensional partial
  differential equations using deep learning}, Proceedings of the National
  Academy of Sciences, 115 (2018), pp.~8505--8510,
  \url{https://doi.org/10.1073/pnas.1718942115}.

\bibitem{7040310}
{\sc M.~B. {Horowitz}, A.~{Damle}, and J.~W. {Burdick}}, {\em Linear
  {Hamilton-Jacobi-Bellman} equations in high dimensions}, in 53rd IEEE
  Conference on Decision and Control, 2014, pp.~5880--5887,
  \url{https://doi.org/10.1109/CDC.2014.7040310}.

\bibitem{Howzard1960}
{\sc R.~A. Howard}, {\em Dynamic programming and {M}arkov processes}, The
  Technology Press of M.I.T., Cambridge, Mass.; John Wiley \& Sons, Inc., New
  York-London, 1960.

\bibitem{Izzo01145}
{\sc D.~Izzo, E.~\"{O}zt\"{u}rk, and M.~M\"{a}rtens}, {\em Interplanetary
  transfers via deep representations of the optimal policy and/or of the value
  function}, in Proceedings of the Genetic and Evolutionary Computation
  Conference Companion, GECCO '19, New York, NY, USA, 2019, Association for
  Computing Machinery, p.~1971–1979,
  \url{https://doi.org/10.1145/3319619.3326834}.

\bibitem{KK2018}
{\sc D.~Kalise and K.~Kunisch}, {\em Polynomial approximation of
  high-dimensional {Hamilton-Jacobi-Bellman} equations and applications to
  feedback control of semilinear parabolic {PDES}}, SIAM Journal on Scientific
  Computing, 40 (2018), pp.~A629--A652,
  \url{https://doi.org/10.1137/17M1116635}.

\bibitem{Kang2015causality}
{\sc W.~Kang and L.~Wilcox}, {\em A causality free computational method for
  {HJB} equations with application to rigid body satellites}, in AIAA Guidance,
  Navigation, and Control Conference, 2015, p.~2009.

\bibitem{Kang2017}
{\sc W.~Kang and L.~C. Wilcox}, {\em Mitigating the curse of dimensionality:
  sparse grid characteristics method for optimal feedback control and {HJB}
  equations}, Computational Optimization and Applications, 68 (2017),
  pp.~289--315, \url{https://doi.org/10.1007/s10589-017-9910-0}.

\bibitem{kingma2014adam}
{\sc D.~P. Kingma and J.~Ba}, {\em Adam: A method for stochastic optimization},
  arXiv preprint arXiv:1412.6980,  (2014).

\bibitem{evans_2010}
{\sc E.~C. Lawrence}, {\em Partial differential equations (second edition)},
  American Mathematical Society, 2010.

\bibitem{ChowOshersplitting2018}
{\sc A.~T. Lin, Y.~T. Chow, and S.~J. Osher}, {\em A splitting method for
  overcoming the curse of dimensionality in {Hamilton–Jacobi} equations
  arising from nonlinear optimal control and differential games with
  applications to trajectory generation}, Communications in Mathematical
  Sciences, 16 (2018), \url{https://doi.org/10.4310/cms.2018.v16.n7.a9}.

\bibitem{Kang2019}
{\sc T.~Nakamura-Zimmerer, Q.~Gong, and W.~Kang}, {\em Adaptive deep learning
  for high-dimensional hamilton--jacobi--bellman equations}, SIAM Journal on
  Scientific Computing, 43 (2021), pp.~A1221--A1247,
  \url{https://doi.org/10.1137/19M1288802}.

\bibitem{Kang2021QRnet}
{\sc T.~{Nakamura-Zimmerer}, Q.~{Gong}, and W.~{Kang}}, {\em Qrnet: Optimal
  regulator design with lqr-augmented neural networks}, IEEE Control Systems
  Letters, 5 (2021), pp.~1303--1308,
  \url{https://doi.org/10.1109/LCSYS.2020.3034415}.

\bibitem{OSHER198812}
{\sc S.~Osher and J.~A. Sethian}, {\em Fronts propagating with
  curvature-dependent speed: Algorithms based on {Hamilton-Jacobi}
  formulations}, Journal of Computational Physics, 79 (1988), pp.~12--49.

\bibitem{Oster2019}
{\sc M.~Oster, L.~Sallandt, and R.~Schneider}, {\em Approximating the
  stationary {Hamilton-Jacobi-Bellman} equation by hierarchical tensor
  products}, arXiv: 1911.00279,  (2019).

\bibitem{Pontryagin1987}
{\sc L.~S. Pontryagin}, {\em Mathematical Theory of Optimal Processes}, CRC
  Press, 1987.

\bibitem{Puterman1979}
{\sc M.~L. Puterman and S.~L. Brumelle}, {\em On the convergence of policy
  iteration in stationary dynamic programming}, Mathematics of Operations
  Research, 4 (1979), pp.~60--69, \url{https://doi.org/10.1287/moor.4.1.60}.

\bibitem{Recht2019tour}
{\sc B.~Recht}, {\em {A Tour of Reinforcement Learning: The View from
  Continuous Control}}, Annual Review of Control, Robotics, and Autonomous
  Systems, 2 (2019), pp.~253--279,
  \url{https://doi.org/10.1146/annurev-control-053018-023825}.

\bibitem{sutton2018reinforcement}
{\sc R.~S. Sutton and A.~G. Barto}, {\em {Reinforcement Learning: An
  Introduction}}, Adaptive Computation and Machine Learning series, MIT Press,
  2018.

\bibitem{TailorIzzo2019}
{\sc D.~Tailor and D.~Izzo}, {\em Learning the optimal state-feedback via
  supervised imitation learning}, Astrodynamics, 3 (2019), pp.~361--374,
  \url{https://doi.org/10.1007/s42064-019-0054-0}.

\bibitem{FastSweep2003}
{\sc Y.-H.~R. Tsai, L.-T. Cheng, S.~Osher, and H.-K. Zhao}, {\em Fast sweeping
  algorithms for a class of {Hamilton--Jacobi} equations}, SIAM Journal on
  Numerical Analysis, 41 (2003), pp.~673--694,
  \url{https://doi.org/10.1137/S0036142901396533}.

\bibitem{FastMarching1994}
{\sc J.~N. {Tsitsiklis}}, {\em Efficient algorithms for globally optimal
  trajectories}, in Proceedings of 1994 33rd IEEE Conference on Decision and
  Control, vol.~2, 1994, pp.~1368--1373 vol.2.

\bibitem{Vienna1988}
{\sc T.~Vienna}, {\em {Theory and Methodology ADPULS in continuous time}}, 34
  (1988), pp.~171--177.

\end{thebibliography}
\end{document}